\documentclass[11pt]{article}

\usepackage[margin=1in]{geometry}


\usepackage[T1]{fontenc}
\usepackage[utf8]{inputenc}
\usepackage{dsfont}

\usepackage{amsmath,amssymb,amsfonts,amsthm}
\usepackage{mathtools}
\usepackage{bm} 
\usepackage{mathrsfs}
\usepackage[only,llbracket,rrbracket]{stmaryrd}
\usepackage{bbm}   

\usepackage{amsfonts}
\usepackage{graphicx}



\usepackage{booktabs}
\usepackage{multirow}
\usepackage{float}
\usepackage{tikz}
\usepackage{sidecap}
\usepackage{caption}

\usepackage{microtype}
\usepackage{xcolor}
\usepackage{nicefrac}
\usepackage[shortlabels]{enumitem}
\usepackage{comment}
\usepackage[normalem]{ulem}   

\usepackage[numbers,sort&compress]{natbib}

\usepackage{url}
\usepackage{hyperref}
\usepackage[capitalise,sort]{cleveref}

\makeatletter
\newcounter{subfig}
\@addtoreset{subfig}{figure}



\newcommand{\subfigcaption}[2]{%
  \refstepcounter{subfig}%
  \label{#1}%
  {\small\textbf{(\alph{subfig})}~#2}%
}
\makeatother

\usepackage{todonotes}


\crefname{enumi}{item}{items}
\crefname{figure}{Figure}{Figures}
\crefname{equation}{}{}

\theoremstyle{definition}

\newtheorem{assumption}{Assumption}

\crefname{case}{Case}{Cases}

\theoremstyle{plain}
\crefname{corollary}{Corollary}{Corollaries}
\newtheorem{theorem}{Theorem}
\newtheorem{proposition}{Proposition}

\newtheorem{remark}{Remark}
\newtheorem{definition}{Definition}
\newtheorem{corollary}{Corollary}


\DeclareMathOperator{\id}{Id}


\newcommand{\R}{\mathbb{R}}





\newcommand{\simcost}{\mathrm{sim}}


\newcommand{\lrbr}[1]{\llbracket #1 \rrbracket}
\newcommand{\bbE}{\mathbb{E}}




\usepackage{tikz}
\usetikzlibrary{matrix,chains,positioning,decorations.pathreplacing,arrows}
\usetikzlibrary{shapes,arrows}
\tikzset{
	font={\fontsize{9pt}{12}\selectfont}}
\usetikzlibrary{calc}

\ExplSyntaxOn

\bool_new:N \g_noteobserve

\NewDocumentCommand{\setnote}{}{
  \bool_gset_true:N \g_noteobserve
}

\NewDocumentCommand{\setobserve}{}{
  \bool_gset_false:N \g_noteobserve
}

\NewDocumentCommand{\nobs}{ o }{
  \IfValueT{#1}{
    \str_if_eq:noTF {note} {#1} {
      \bool_gset_true:N \g_noteobserve
    } {
      \str_if_eq:noTF {Note} {#1} {
        \bool_gset_true:N \g_noteobserve
      } {
        \bool_gset_false:N \g_noteobserve
      }
    }
  }
  \bool_if:nTF { \g_noteobserve } {
    \bool_gset_false:N \g_noteobserve
    note
  } {
    \bool_gset_true:N \g_noteobserve
    observe
  }
  \IfValueF{#1}{~}
}

\NewDocumentCommand{\Nobs}{ o }{
  \IfValueT{#1}{
    \str_if_eq:noTF {note} {#1} {
      \bool_gset_true:N \g_noteobserve
    } {
      \str_if_eq:noTF {Note} {#1} {
        \bool_gset_true:N \g_noteobserve
      } {
        \bool_gset_false:N \g_noteobserve
      }
    }
  }
  \bool_if:nTF { \g_noteobserve } {
    \bool_gset_false:N \g_noteobserve
    Note
  } {
    \bool_gset_true:N \g_noteobserve
    Observe
  }
  \IfValueF{#1}{~}
}

\int_new:N \g_furthermore

\NewDocumentCommand{\Moreover}{ o o }{
  \IfValueT{#1}{
    \str_case:nn {#1} {
      {Furthermore} {\int_set:Nn {\g_furthermore} {0}}
      {Moreover} {\int_set:Nn {\g_furthermore} {1}}
      {In~addition} {\int_set:Nn {\g_furthermore} {2}}
      {note} {\bool_gset_true:N \g_noteobserve}
      {observe} {\bool_gset_false:N \g_noteobserve}
    }
    \IfValueT{#2}{
      \str_case:nn {#2} {
        {Furthermore} {\int_set:Nn {\g_furthermore} {0}}
        {Moreover} {\int_set:Nn {\g_furthermore} {1}}
        {In~addition} {\int_set:Nn {\g_furthermore} {2}}
        {note} {\bool_gset_true:N \g_noteobserve}
        {observe} {\bool_gset_false:N \g_noteobserve}
      }
    }
  }
  \int_case:nn { \int_mod:nn {\g_furthermore} {3} } {
    { 0 } { Furthermore,~\nobs that}
    { 1 } { Moreover,~\nobs that}
    { 2 } { In~addition,~\nobs that}
  }
  \int_incr:N \g_furthermore
  \IfValueF{#1}{~}
}

\bool_new:N \g_hencetherefore

\NewDocumentCommand{\hence}{}{
  \bool_if:nTF { \g_hencetherefore } {
    \bool_gset_false:N \g_hencetherefore
    hence~
  } {
    \bool_gset_true:N \g_hencetherefore
    therefore~
  }
}

\NewDocumentCommand{\Hence}{}{
  \bool_if:nTF { \g_hencetherefore } {
    \bool_gset_false:N \g_hencetherefore
    Hence,~we~obtain~
  } {
    \bool_gset_true:N \g_hencetherefore
    Therefore,~we~obtain~
  }
}



\seq_new:N \g_cflist_loaded
\seq_new:N \g_cflist_pending

\NewDocumentCommand{\cfadd}{ m }
{
	\seq_if_in:NnF \g_cflist_loaded { #1 } {
		\seq_if_in:NnF \g_cflist_pending { #1 } {
			\seq_gput_right:Nn \g_cflist_pending { #1 }
		}
	}
}

\NewDocumentCommand{\cfconsiderloaded}{ m }{
	\seq_gput_right:Nn \g_cflist_loaded {#1}
}

\NewDocumentCommand{\cfremove}{ m }
{
	\seq_gremove_all:Nn \g_cflist_pending { #1 }
}

\NewDocumentCommand{\cfload}{ o }
{
	\seq_if_empty:NTF \g_cflist_pending {\unskip} {
		(cf.\ \cref{\seq_use:Nn \g_cflist_pending {,}})\IfValueTF{#1}{#1~}{\unskip}
		\seq_gconcat:NNN \g_cflist_loaded \g_cflist_loaded \g_cflist_pending
		\seq_gclear:N \g_cflist_pending
	}
}

\NewDocumentCommand{\cfclear} {} {
	\seq_gclear:N \g_cflist_loaded
	\seq_gclear:N \g_cflist_pending
}

\NewDocumentCommand{\cfout}{ o }
{
	\seq_if_empty:NTF \g_cflist_pending {\unskip} {
		(cf.\ \cref{\seq_use:Nn \g_cflist_pending {,}})\IfValueTF{#1}{#1~}{\unskip}
		\seq_gclear:N \g_cflist_pending
	}
}

\NewDocumentCommand{\ifnocf} { m } {
	\seq_if_empty:NT \g_cflist_pending { #1 }
}

\ExplSyntaxOff

\NewDocumentEnvironment{cproof}{m}
{\begin{proof}[Proof of \cref{#1}]}%
	{\noindent The proof of \cref{#1} is thus complete.
\end{proof}}

\NewDocumentEnvironment{cproof2}{m}
{\begin{proof}[Proof of \cref{#1}]}%
	{\noindent This completes the proof of \cref{#1}.
\end{proof}}

\usepackage{accents}

\renewcommand{\dot}[1]{\overset{\scalebox{1.25}{$\,.$}}{#1}}

\begin{document}

\title{Dynamics Over Landscape: The Emergence of Linear Separability via Spectral Alignment in Contrastive Learning}

\author{Jeff Calder\footnote{University of Minnesota, jwcalder@umn.edu}, \quad Wonjun Lee\footnote{The Ohio State University, lee.8222@osu.edu}}




\maketitle

\begin{abstract}
Contrastive learning effectively clusters data despite a loss landscape filled with poor solutions, a success that is heavily dependent on the choice of data augmentations. How optimization consistently finds meaningful patterns remains an open question. We show this success stems from training dynamics rather than the loss function alone. Crucially, under a highly specific structural assumption governing the connectivity and variance of the data augmentations, we prove that once a critical spectral alignment threshold is reached, data features inevitably and rapidly separate into distinct clusters. We establish this mechanism for both discrete datasets and the macroscopic continuum limit, modeling latent dynamics as a Wasserstein gradient flow to demonstrate that this separation persists as the number of data points approaches infinity. We hypothesize that natural training dynamics inherently drive the system toward this critical state. We extensively validate this empirically across four diverse domains (synthetic shapes, images, text, and PDEs). In every setting, a sharp increase in this spectral quantity consistently precedes clean data separation, revealing that contrastive learning's success is governed by a dynamically emerging trigger tightly coupled to the underlying augmentation structure.
\end{abstract}

\section{Introduction}

Unsupervised learning of effective representations for data is one of the most fundamental problems in machine learning, especially in the context of image data. The widely successful \emph{discriminative approach} to learning representations of data is most similar to fully supervised learning, where features are extracted by a backbone convolutional neural network, except that the fully supervised task is replaced by an unsupervised or \emph{self-supervised} task that can be completed without labeled training data. 

Many successful discriminative representation learning methods are based around the idea of finding a feature map that is \emph{invariant} to a set of transformations (i.e., data augmentations) that are expected to be present in the data. For image data, the transformations may include image scaling, rotation, cropping, color jitter, Gaussian blurring, and adding noise, though the question of which augmentations give the best features is not trivial \cite{tian2020makes}. Invariant feature learning methods include VICReg \cite{bardes2021vicreg}, Bootstrap Your Own Latent (BYOL) \cite{grill2020bootstrap}, Siamese neural networks \cite{chicco2021siamese}, and contrastive learning techniques such as SimCLR \cite{chen2020simple} (see also \cite{hadsell2006dimensionality, dosovitskiy2014discriminative, oord2018representation, bachman2019learning}).

In contrastive learning, the primary self-supervised task is to differentiate between positive and negative pairs of data instances. The goal is to find a feature map for which positive pairs have maximally similar features, while negative pairs have maximally different features. The positive and negative examples do not necessarily correspond to classes. In SimCLR, positive pairs are images that are the same up to a transformation, while all other pairs are negative pairs. Contrastive learning has also been successfully applied in supervised \cite{khosla2020supervised} and semi-supervised contexts \cite{li2021comatch,yang2022class,singh2021clda,zhang2022semi,lee2022contrastive,kim2021selfmatch, ji2023power}, and has been used for learning Lie Symmetries of partial differential equations \cite{mialon2023self} (for a survey see \cite{le2020contrastive}).

All invariance-based feature extraction techniques must address the fundamental problem of dimensional collapse, whereby a method learns the trivial constant map $f(x)=c$ (or a very low rank map), which is invariant to \emph{all} transformations, but not informative or descriptive. There are various ways to prevent dimensional collapse. In contrastive learning, the role of the negative pairs is to prevent collapse by creating repulsion terms in the latent space; however, full or partial collapse can still occur \cite{jing2021understanding,zhang2022does,shen2022connect,li2022understanding}. In BYOL, collapse is prevented by halting backpropagation in certain parts of the loss and incorporating temporal averaging. In VICReg, additional terms are added to the loss function to maintain variance in each latent dimension, as well as to decorrelate variables. 

\begin{figure}[!t]
\centering

\begin{minipage}[t]{0.32\linewidth}
\centering
\includegraphics[width=\linewidth]{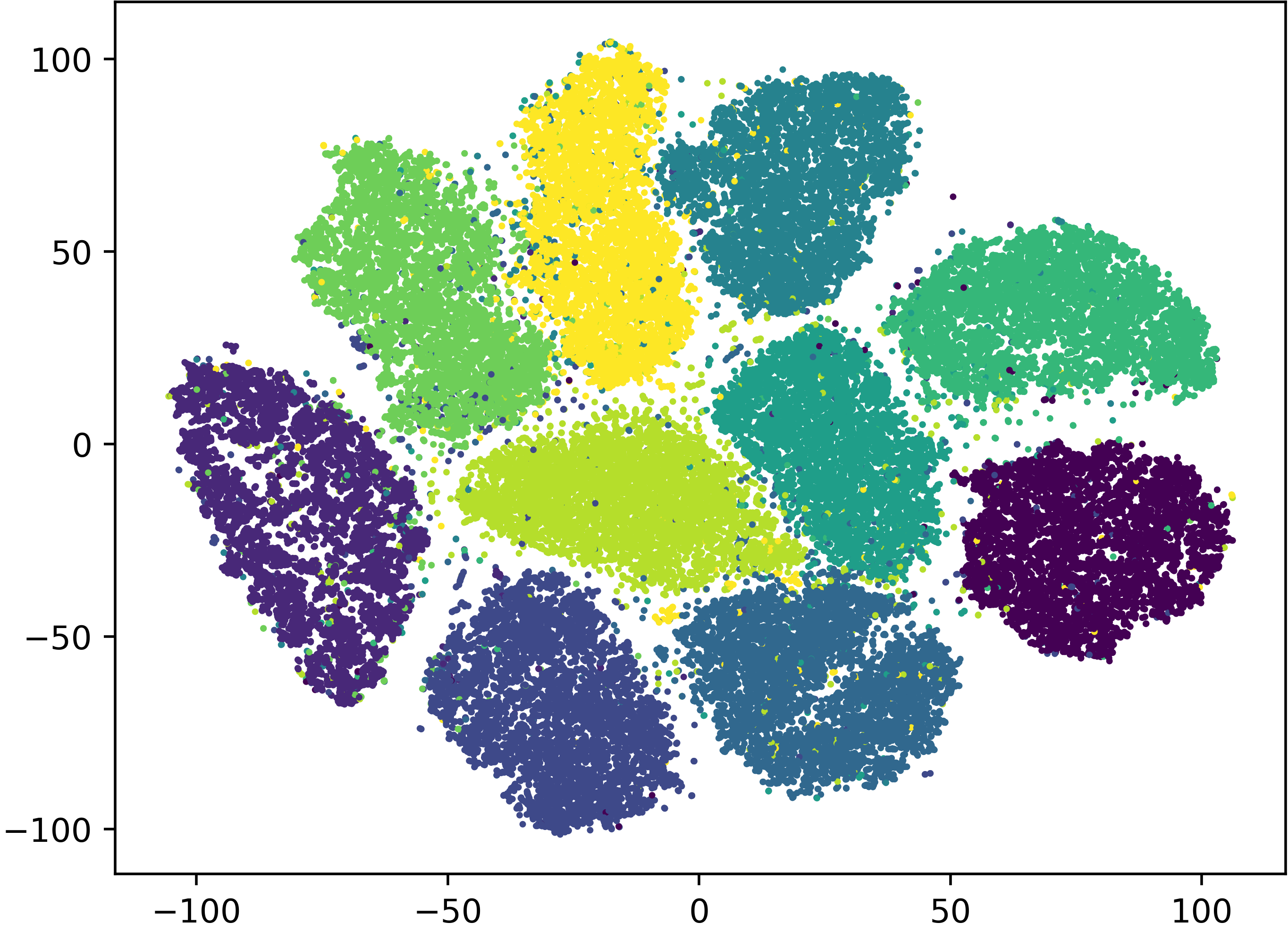}
\par\subfigcaption{fig:mnist_raw}{MNIST: pixel space}
\end{minipage}\hfill
\begin{minipage}[t]{0.32\linewidth}
\centering
\includegraphics[width=\linewidth]{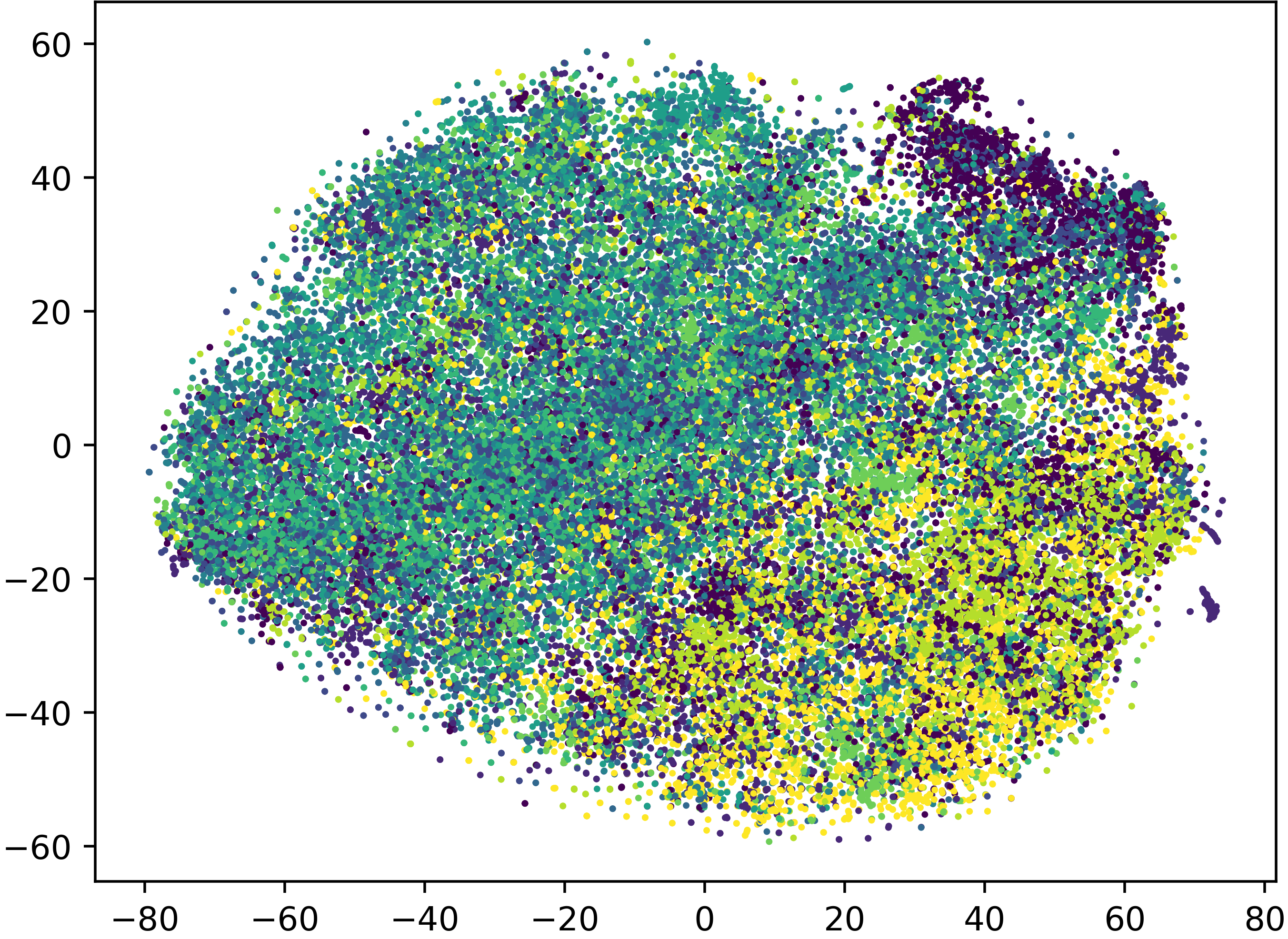}
\par\subfigcaption{fig:cifar_raw}{CIFAR-10: pixel space}
\end{minipage}\hfill
\begin{minipage}[t]{0.32\linewidth}
\centering
\includegraphics[width=\linewidth]{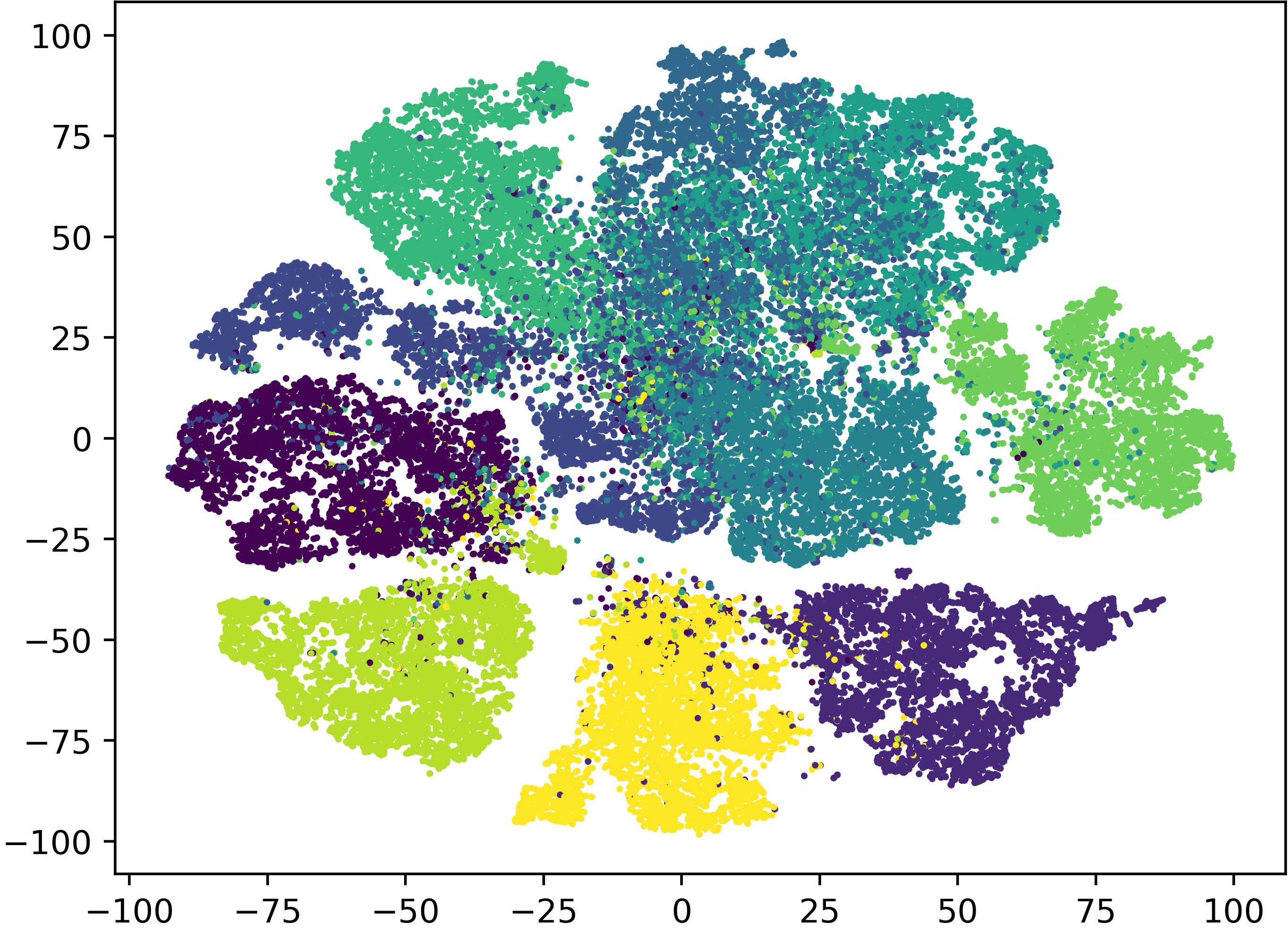}
\par\subfigcaption{fig:cifar_simclr}{CIFAR-10: SimCLR}
\end{minipage}

\caption{t-SNE visualizations of the MNIST and CIFAR-10 data sets. In (a) and (b) the images are represented by the raw pixels, while (c) gives a visualization of the SimCLR embedding. This illustrates how SimCLR is able to uncover clustering structure in data sets.}
\label{fig:demo}
\end{figure}

Provided dimensional collapse does not occur, a fundamental unresolved question surrounding many feature learning methods is: 

    \begin{quote}
        \emph{why do they work so well at producing embeddings that uncover key features and patterns in complex data sets? }
    \end{quote}

\noindent As a simple example, consider \cref{fig:demo}. In (a) and (b) we show t-SNE \cite{van2008visualizing} visualizations of the MNIST \cite{deng2012mnist} and CIFAR-10 \cite{krizhevsky2009learning} data sets, respectively, using their pixel representations. We can see that visual features are not required on MNIST, which is highly preprocessed, while for CIFAR-10 the pixel representations are largely uninformative, and feature representations are essential. In (c) we show a t-SNE visualization of the latent embedding of the SimCLR method applied to CIFAR-10, which indicates that SimCLR has uncovered a strong clustering structure in CIFAR-10 that was not present in the pixel representation.  

The goal of this paper is to provide a framework that can begin to address this question, and in particular, to explain \cref{fig:demo}. To do this, we assume the data follows a \emph{corruption} model, where the observed data is derived from some clean data with distribution $\mu$ that is highly structured or clustered in some way (e.g., follows the manifold assumption with a clustered density). The observed data is then obtained by applying transformations at random from a set of augmentations $\mathcal{T}$ to the clean data points (i.e., taking different views of the data), producing a corrupted distribution $\tilde \mu$. Crucially, our analysis relies on a highly specific structural assumption governing how these augmentations connect data points, which we formalize through an \emph{augmentation graph}. We assume that the transformations create bounded variance within local neighborhoods while preserving a necessary degree of spectral connectivity between related views. The main question that motivated our work is that of understanding what properties of the original clean data distribution $\mu$ can be uncovered by unsupervised contrastive feature learning techniques? That is, once an invariant feature map $f:\R^D \to \R^d$ is learned, is the latent distribution $f_\#\tilde \mu$ similar in any way to the clean distribution $\mu$, or can it be used to deduce any geometric or topological properties of $\mu$?   

To answer this, we explain how meaningful feature structure and linear separability emerge from neural network training under contrastive learning. The central object of interest in our analysis is the \emph{time-evolving neural kernel}, which governs how gradient information propagates across data points during training. Our goal is to understand how the dynamics induced by this kernel transform initially unstructured features into linearly separable representations. Rather than working in a lazy training regime, we analyze the fully nonlinear training dynamics in which the kernel itself evolves over time. 

The key idea is to decompose the argument into two conceptual steps. First, we show that, under mild regularity assumptions, the short-time evolution of features admits an exact representation in terms of a kernel operator that can be interpreted as a graph Laplacian acting on the data. Second, we identify a quantitative \emph{spectral dominance condition} on this kernel that guarantees the emergence of linear separability within finite time. The only substantive requirement to derive these kernel dynamics is a spectral discrepancy condition, which characterizes when the evolving kernel exhibits sufficient contrast between within-cluster and cross-cluster interactions. We demonstrate empirically that such a spectral structure naturally develops during training. Explaining \emph{why} nonlinear kernel dynamics tend to produce this spectral organization is an important question in its own right and lies beyond the scope of this work; our focus here is instead to isolate and characterize the mechanism that triggers linear separability once this spectral condition is in place.

The main contribution of our paper can be summarized as follows. 
\begin{center}
\fbox{
\begin{minipage}{0.95\textwidth}
\vspace{1mm}
{\bf Contribution:} We demonstrate that the success of contrastive learning cannot be explained by the loss function alone, which contains infinitely many undesirable local minimizers. Instead, through a rigorous analysis of the continuous-time gradient flow, we show that the parameterized optimization dynamics inherently drive the learned representation. Specifically, heavily contingent upon our structural assumption regarding the augmentation graph, we identify a precise spectral triggering condition, quantified by coercivity, that guarantees the onset of linear separability, ensuring that cluster structures in $\mu$ persist and sharpen in the learned features.
\vspace{1mm}
\end{minipage}
}
\end{center}

Our framework explains the success of invariance-based feature learning techniques like SimCLR (with extensions to other methods detailed in the appendix) and complements existing research on dimensional collapse \cite{jing2021understanding, zhang2022does, shen2022connect, li2022understanding} by characterizing the learning process even when collapse is avoided. Aligning with recent macroscopic modeling \cite{meng2024training}, we formulate our discrete finite-particle dynamics as a continuous Wasserstein gradient flow, proving that this geometric separation mechanism fundamentally persists as the dataset size approaches infinity. Furthermore, our approach builds upon foundational works \cite{haochen2021provable, balestriero2022contrastive} that provide guarantees for downstream tasks by linking intrinsic semantic clusters to an ``augmentation graph.'' While these prior works connect the optimal representations of contrastive losses to the \emph{static} spectral properties of this graph at the global minimum, we shift the focus entirely to continuous-time training dynamics. 
Modeling these dynamics naturally connects our framework to the rich literature on the Neural Tangent Kernel (NTK) \cite{jacot2018neural, arora2019exact, lee2019wide}. However, classical NTK analyses typically invoke infinite-width or lazy training assumptions, which constrain the kernel to remain fixed throughout the optimization process. In contrast, we analyze the \emph{time-dependent} NTK without relying on these restrictive regimes, explicitly studying its evolution over training time. Specifically, we introduce a \emph{dynamic} spectral property: an alignment threshold within this evolving kernel. We demonstrate that the structural properties of the augmentation graph interact directly with the time-dependent gradient flow to eventually cross this critical threshold, actively steering features away from sub-optimal local minima and triggering the onset of clean linear separation.

\normalcolor

\noindent{\bf Outline:} In \cref{sec:cont} we provide an overview of contrastive learning and introduce our data corruption model. In \cref{sec:opt} we derive and study the optimality conditions for the SimCLR loss and characterize its stationary points. In \cref{sec:opt-nn} we analyze the neural training dynamics of SimCLR and present our main results: identifying the strict coercivity condition under which the learned representations become linearly separable, and establishing that this separation phenomenon holds in both the discrete regime and the infinite-data continuum limit. Finally, in \cref{sec:num} we present extensive numerical experiments across diverse data modalities to empirically validate our theory, demonstrating that the rise in coercivity consistently precedes the onset of linear separation.

\section{Contrastive learning}\label{sec:cont}

We describe here our model for corrupted data in the setting of contrastive learning, and a reformulation of the SimCLR loss that is useful for our analysis.  Let $\mu \in \mathbb{P}(\mathbb{R}^D)$ be a data distribution in $\mathbb{R}^D$. Let $\mathcal{T}$ be a set of transformation functions $T: \mathbb{R}^D \rightarrow \mathbb{R}^D$ that is measurable such that, for a given $x \in \mathbb{R}^D$, $T(x) \in \mathbb{R}^D$ represents a perturbation of $x$, such as a data augmentation (e.g., cropping and image, etc.). Let $\tilde\mu \in \mathbb{P}(\mathbb{R}^D)$ denote the distribution obtained by perturbing $\mu$ with the perturbations defined in $\mathcal{T}$. That is, we choose a probability distribution $\nu\in \mathbb{P}(\mathcal{T})$ over the perturbations, and samples from $\tilde \mu$ are generated by sampling $x\sim \mu$ and $f\sim \nu$, and taking the composition $f(x)$.

We treat $\mu$ as the original clean data, which is not observable, while the perturbed distribution $\tilde \mu$ is how the data is presented. Our goal is to understand whether contrastive learning can recover information about the original data $\mu$, provided the distribution of augmentations $\nu$ is known.

Ostensibly, the objective of contrastive learning is to identify an embedding function $ f: \mathbb{R}^D \rightarrow \mathbb{R}^d $ that is invariant to the set of transformations $\mathcal{T}$. Provided such an invariant map is identified, $f$ pushes forwards both $\mu$ and $\tilde \mu$ to the same latent distributions, that is 
\[
    f_\#\tilde{\mu} = f_\#\mu.
\]
However, it is far from clear how $\mu$ and $f_\#\mu$ are related, and whether any interesting structures in $\mu$ (such as clusterability) are also present in $f_\#\mu$. 
 
For instance, if $\tilde{\mu}$ represents image data, contrastive learning aims to discover a feature distribution $f_\#\tilde{\mu}$ that remains invariant to transformations such as random translation, rotation, cropping, Gaussian blurring, and others. 
As a result, this feature distribution effectively captures the essential characteristics of the data without being influenced by these perturbations. These feature distributions are often leveraged in downstream tasks such as classification, clustering, object detection, and retrieval, where they achieve state-of-the-art performance \cite{le2020contrastive}.

To achieve this, a cost function is designed to bring similar points closer and push dissimilar points apart through the embedding map, using attraction and repulsion forces. A popular example is the Normalized Temperature-Scaled Cross-Entropy Loss (NT-Xent loss) introduced by \cite{chen2020simple}, which leads to the optimization problem
\begin{equation}\label{eq:cost-discrete}
\min_{f:\mathbb{R}^D\rightarrow\mathbb{R}^d} \mathop{\mathbb{E}}_{x\sim \mu, T,T' \sim \nu}  \log \left( 1 + \frac{\sum_{h\in\{T,T'\}} \mathbb{E}_{y\sim\mu} \left[\mathds{1}_{x \neq y} \exp\left(\frac{\simcost_f(T(x),h(y))}{\tau}\right) \right]}{\exp\left(\frac{{\simcost}_f(T(x),T'(x))}{\tau}\right)} \right),
\end{equation}
where $\nu \in \mathbb{P}(\mathcal{T})$ is a probability distribution on $\mathcal{T}$, which is assumed to be a measurable space,  $\tau$ is a given parameter, and $\simcost_f: \mathbb{R}^D \times \mathbb{R}^D \rightarrow \mathbb{R}$ is a function measuring the similarity between two embedded points with $f$ in $\mathbb{R}^d$ defined as $\simcost_f(x,y) = \frac{f(x) \cdot f(y)}{\|f(x)\| \|f(y)\|}.$
The denominator inside the log function acts as an attraction force between perturbed points from the same sample $x$, while minimizing the numerator acts as a repulsion force between points from different samples $x$ and $y$. Thus, the minimizer $f$ of the cost is expected to exhibit invariance under the set of transformations  $\mathcal{T}$:
{\begin{equation}\label{eq:inv}
    f(T(x)) = f(x), \quad 
    \quad \forall \ x \in \mathbb{R}^D, \ \forall \ T \in \mathcal{T}.
\end{equation}}
The repulsion force prevents dimensional collapse, where the map sends every sample to a constant: $f(x)=c$ for all $x\in \mathbb R^D$.

An important observation is that the NT-Xent loss becomes independent of the data distribution once the feature map $f$ is invariant, meaning that the latent distribution corresponding to an invariant minimizer may bear little resemblance to the input data. Interestingly, a similar effect is observed in other unsupervised learning models such as VICReg \cite{bardes2021vicreg} and BYOL \cite{grill2020bootstrap} (see the appendix for further discussion). Despite the fact that the NT-Xent loss has minimizers completely independent of the original data distribution $\mu$, in practice the clustering structure of $\mu$ often emerges in the latent space which results in the state-of-the-art performance in machine learning tasks.

To better understand this behavior, we analyze the NT-Xent loss by examining its minimizer and the dynamics of gradient descent. In order to overcome challenges caused by the nondifferentiability of the angular similarity $\simcost_f$ and the nonuniqueness of solutions (e.g., any $kf$ is also a minimizer for $k > 0$), we reformulate the loss to simplify the analysis. This leads to a generalized formulation of the NT-Xent loss in \cref{eq:cost-discrete}.


\begin{definition}\label{def:new-form}
    The cost function we consider for contrastive learning is
    \begin{equation}\label{eq:cost-dis-new}
        \inf_{f \in \mathcal{C}}  \left\{ L(f) := \bbE_{\substack{x\sim \mu, T,T' \sim \nu}} \Psi \left(
            \frac{\bbE_{y\sim\mu} \eta_f(T(x),T'(y))}{\eta_f(T(x),T'(x))}  \right) \right\},
    \end{equation}
    where $\Psi:\mathbb{R}\rightarrow\mathbb{R}$ is a nondecreasing function, $\mathcal{C}$ is a constraint set, and $\eta_f$ is defined as
    \begin{equation}\label{eq:eta}
        \begin{aligned}
            \eta_f(x,y) = \eta(\|f(x) - f(y)\|^2/2),
        \end{aligned}
    \end{equation}
    where $\eta:\mathbb{R}_{\geq0}\rightarrow \mathbb{R}$ is a differentaible similarity function that is maximized at $0$.
\end{definition}
The formulation in \cref{eq:cost-dis-new} generalizes the original formulation in \cref{eq:cost-discrete} by removing the indicator function $\mathds{1}_{x \neq y}$, as the effect of this function becomes negligible when a large $n$ is considered. Furthermore, the generalized formulation introduces a differentaible similarity function. This simplifies the analysis of the minimizer in the variational formulation. The generalized formulation can easily be related to the original cost function in \cref{eq:cost-discrete} by setting $\Psi(t)=\log(1+t)$, $\eta(t) = e^{-t/\tau}$ and defining $\mathcal{C} = \{ f:\mathbb{R}^D\rightarrow \mathbb{S}^{d-1}\}$. Then, the similarity function $\eta_f$ retains the same interpretation as angular similarity $\simcost_f$. This is because, if $f$ lies on the unit sphere in $\mathbb{R}^d$, and so
\[
\exp\left(- \frac{1}{2\tau} \|f(x) - f(y)\|^2\right)
= C\exp\left(\frac{1}{\tau}\frac{f(x) \cdot f(y)}{\|f(x)\|\|f(y)\|}\right),
\]
where $C = \exp(-1/\tau)$. The consideration of the constraint also resolves the issue in \cref{eq:cost-discrete}, where $kf$, for any $k \in \mathbb{R}$, could be a minimizer of \cref{eq:cost-discrete}. Thus, in the end, the introduced formulation in \cref{eq:cost-dis-new} remains fundamentally consistent with the original NT-Xent cost structure.

\section{Optimality condition}\label{sec:opt}

In this section, we aim to find the optimality condition for \cref{eq:cost-dis-new} and analyze properties of the minimizers. Using the first optimality condition we can characterize the minimizer of the NT-Xent loss in \cref{eq:cost-dis-new}. Theorem \ref{thm:xent-min} below describes the possible local minimizers of \cref{eq:cost-dis-new}, considering the constraint set defined as
\[
\mathcal{C}=\{f: \mathbb{R}^D \rightarrow \mathbb{S}^{d-1}\}.
\]
This spherical constraint is motivated by practical considerations. In most contrastive learning algorithms, it is standard to normalize feature representations to unit norm before computing similarities, so that embeddings lie on the unit sphere. This normalization stabilizes training, prevents feature norm collapse, and ensures that the similarity measure depends only on angular relationships between samples. Accordingly, analyzing the optimization problem under the spherical constraint reflects the common practice used in modern contrastive learning frameworks.

\begin{theorem}[Abundance and Stability of Stationary Embeddings]
\label{thm:xent-min}

Let $\mu \in \mathbb{P}(\mathbb{R}^D)$ be an arbitrary data distribution (which may be a continuous measure or a general finite empirical dataset), and consider the NT-Xent loss over the constraint set
$\mathcal C = \{ f:\mathbb R^D \to \mathbb S^{d-1} \}.$ Suppose $f$ is an invariant embedding such that the pushforward measure $f_\#\mu$ is a symmetric discrete measure satisfying
\begin{equation}\label{eq:sym-cond}
    \int_{\mathbb S^{d-1}} h(x_1,y)\,df_\#\mu(y) = \int_{\mathbb S^{d-1}} h(x_2,y)\,df_\#\mu(y)
\end{equation}
for all $x_1,x_2$ in the support of $f_\#\mu$ and all anti-symmetric functions $h(x,y) = -h(y,x)$. Then $f$ is a stationary point of the NT-Xent loss.

Assume in addition that $f$ collapses the underlying data distribution $\mu$ into exactly $K$ discrete cluster centers $\xi_1, \dots, \xi_K \in \mathbb{S}^{d-1}$, meaning the pushforward measure is a sum of Dirac masses
\[
f_\#\mu = \sum_{k=1}^K p_k \delta_{\xi_k},
\]
where $p_k > 0$ represents the mass of the data mapped to center $\xi_k$, and the measure $f_\#\mu$ satisfies the symmetry condition in \cref{eq:sym-cond}. 

Then there exists a constant $c>0$ depending only on $\tau$, $K$, and the cluster geometry such that for every admissible perturbation $h:\mathbb R^D \to \mathbb R^d$ satisfying the tangency condition
\[
\langle f(x),h(x)\rangle = 0 \quad \text{for all } x,
\]
the second variation follows
\[
\delta^2 L(f)(h,h) \geq c \sum_{k \neq l} e^{-\|\xi_k-\xi_l\|^2/(2\tau)} \|h(\xi_k)-h(\xi_l)\|^2 p_k p_l > 0
\]
whenever $h$ is orthogonal to infinitesimal rotations of the sphere, namely
\[
\int \langle h(x),Af(x)\rangle d\mu(x)=0 \quad \text{for all skew-symmetric } A.
\]
Consequently, $f$ is a strict local minimizer of the NT-Xent loss modulo global rotations of the sphere.
\end{theorem}

\begin{proof}
We first establish the stationary condition. The constrained problem
\[
\min_{f:\mathbb R^D\to \mathbb S^{d-1}} L(f)
\]
can be treated via Lagrange multipliers. Introducing a scalar multiplier $\lambda(x)$ for the pointwise constraint $\|f(x)\|=1$, the Euler–Lagrange equation takes the form
\[
\frac{\delta L}{\delta f}(x) - \lambda(x) f(x) = 0.
\]
Using the first variation formula for the NT-Xent functional, one obtains
\[
\frac{\delta L}{\delta f}(x) = \int \Psi'(G(f,x))\,\eta_f'(x,y)\,(f(x)-f(y))\,d\mu(y).
\]
Since $f$ is invariant and takes values on the sphere, the factor $\Psi'(G(f,x))$ is constant for all $x$ in the support of $f_\#\mu$. Writing the stationarity condition in terms of the pushforward measure gives
\[
\lambda(x) = C \int_{\mathbb S^{d-1}} r'(|x-y|^2/2)\,(x-y)\,df_\#\mu(y),
\]
where $C$ is constant. The function $h(x,y)=r'(|x-y|^2/2)(x-y)$ is anti-symmetric. By the assumed symmetry of $f_\#\mu$, the integral above is independent of $x$ for any point in the support. Therefore $\lambda(x)$ is constant and the Euler–Lagrange equation is satisfied, proving that $f$ is stationary.

We now analyze the second variation. Consider perturbations $f_t(x)=\frac{f(x)+t h(x)}{\|f(x)+t h(x)\|}$. The tangency condition $\langle f,h\rangle=0$ ensures that this curve remains on the sphere to first order.

Using the standard second variation formula for pairwise interaction energies, one obtains
\[
\delta^2 L(f)(h,h) = \iint W(x,y)\Big( \langle f(x)-f(y),h(x)-h(y)\rangle^2 - \tau \|h(x)-h(y)\|^2 \Big)\,d\mu(x)d\mu(y),
\]
where
\[
W(x,y) = \frac{e^{-\|f(x)-f(y)\|^2/(2\tau)}}{\tau^2(1+G(x)^2/2)}.
\]

Because $f_\#\mu$ collapses the data into $K$ distinct Dirac masses $\{\xi_k\}_{k=1}^K$, the double integral over the underlying data domain (regardless of whether $\mu$ is a continuous distribution or a discrete finite sum) reduces strictly to a finite double sum over these cluster centers. Let $h_k = h(\xi_k)$ represent the perturbation at center $k$. The second variation becomes
\[
\delta^2 L(f)(h,h) = \sum_{k=1}^K \sum_{l=1}^K W_{kl} \Big( \langle \xi_k-\xi_l,h_k-h_l\rangle^2 - \tau \|h_k-h_l\|^2 \Big) p_k p_l.
\]

Since points mapped to the same center coincide ($\xi_k = \xi_l$ when $k=l$), the intra-cluster terms vanish, and only inter-cluster terms ($k \neq l$) contribute. For distinct clusters, the vectors $\xi_k-\xi_l$ are bounded away from zero. Using the tangency condition and orthogonality to rotational modes, one can decompose
\[
h_k-h_l = \alpha_{kl}(\xi_k-\xi_l) + v_{kl},
\]
where $v_{kl}$ is orthogonal to $\xi_k-\xi_l$. Substituting this decomposition yields
\[
\langle \xi_k-\xi_l,h_k-h_l\rangle^2 = \|\xi_k-\xi_l\|^2 \alpha_{kl}^2,
\]
while
\[
\|h_k-h_l\|^2 = \|\xi_k-\xi_l\|^2 \alpha_{kl}^2 + \|v_{kl}\|^2.
\]
Because $\|\xi_k-\xi_l\|$ is uniformly bounded below for distinct clusters, the quadratic form above becomes coercive
\[
\delta^2 L(f)(h,h) \geq c \sum_{k \neq l} W_{kl}\|h_k-h_l\|^2 p_k p_l
\]
for some $c>0$. 

Finally, if $h(x)=Af(x)$ for a skew-symmetric matrix $A$, then $h_k-h_l=A(\xi_k-\xi_l)$ and all pairwise differences lie in a rotation direction, yielding $\delta^2 L(f)(h,h)=0$. Excluding these global rotational modes provides strict positivity. This establishes local minimality modulo rotations.
\end{proof}

The alignment--uniformity framework of \cite{wang2020understanding} provides a foundational perspective on contrastive learning. Their analysis shows that, in the asymptotic limit of infinitely many negative samples, the continuous uniform distribution emerges as the global minimizer of the repulsive term. The following corollary connects our variational framework with this uniformity hypothesis, demonstrating that the uniform distribution naturally satisfies the symmetry condition established in \Cref{thm:xent-min}. By analyzing the exact NT-Xent loss directly, we relax the requirement for an asymptotic limit and broaden the set of stationary solutions to include not only continuous distributions but also highly symmetric discrete configurations.

\begin{corollary}[Stationarity and Stability of the Uniform Measure]
\label{cor:uniform_stationary}
Let $\sigma_{d-1}$ denote the continuous uniform probability measure on the sphere $\mathbb{S}^{d-1}$. If the embedding map $f$ perfectly disperses the data such that its pushforward measure is uniform ($f_\#\mu = \sigma_{d-1}$), then $f$ is not only a stationary point of the NT-Xent loss, but also a stable local minimizer modulo global rotations.
\end{corollary}

\begin{proof}
For $f_\#\mu = \sigma_{d-1}$, the integral vector field driving the gradients, $v(x) = \int_{\mathbb{S}^{d-1}} r'(|x-y|^2/2)(x-y) \,d\sigma_{d-1}(y)$, must be equivariant under all global rotations $R \in \mathrm{SO}(d)$ due to the perfect isotropy of the uniform measure. The only vector field satisfying this universal symmetry is purely radial, $v(x) = cx$ for some scalar $c \in \mathbb{R}$. Because $x \in \mathbb{S}^{d-1}$, the radial projection $\langle v(x), x \rangle = c$ evaluates to an identical constant for all $x$, natively satisfying the Euler-Lagrange stationarity condition of \Cref{thm:xent-min}.

The NT-Xent repulsive denominator is governed by the interaction energy $E(\mu) = \iint e^{\langle x, y \rangle / \tau} \,d\mu(x) d\mu(y)$. From \cite{cohn2007universally, wang2020understanding}, it is established that completely monotone potentials, such as the exponential function, are uniquely globally minimized by the continuous uniform measure. Because $\sigma_{d-1}$ achieves the strict global minimum of this energy landscape, any admissible perturbation $h(x)$ that is not a trivial rigid rotation must strictly increase the functional. Consequently, the second variation $\delta^2 L(f)(h,h)$ is strictly positive for all non-rotational perturbations, confirming $\sigma_{d-1}$ is a locally stable attractor.
\end{proof}

While the asymptotic analysis predicts continuous uniformity as the ideal solution, \Cref{thm:xent-min} reveals that the actual finite-temperature NT-Xent landscape is vastly more complex. Perfect continuous uniformity is merely one stationary element within a much broader class of symmetric configurations. 

More importantly, the theorem establishes the strict local stability of \textit{discrete} symmetric configurations. As illustrated in \Cref{fig:dirac_collapse_comparison}, these stationary embeddings include not only desirable configurations that preserve true class topologies (\Cref{fig:dirac_collapse_comparison}A) but also completely degenerate, class-scrambling states (\Cref{fig:dirac_collapse_comparison}B). In these "bad" local minimizers, arbitrary data points collapse into meaninglessly symmetric Dirac masses on the sphere (e.g., $f_\#\mu = \frac{1}{K}\sum_{k=1}^K \delta_{\xi_k}$). Because these degenerate configurations perfectly satisfy the discrete symmetric integral condition, the repulsive gradients of the contrastive denominator perfectly balance out, trapping the network in a locally stable configuration modulo rotational symmetries. 

\begin{figure}[htbp]
    \centering
    \begin{minipage}{0.48\textwidth}
        \centering
        \begin{tikzpicture}[>=stealth, scale=0.8]
            \draw[thick, dashed, fill=gray!5, rounded corners] (-4, -2) rectangle (-1, 2);
            \node at (-2.5, -2.5) {Data Space ($\mathbb{R}^D$)};
            
            \fill[blue!40] (-3, 1) ellipse (0.4 and 0.3);
            \fill[red!40] (-1.8, 0) ellipse (0.5 and 0.4);
            \fill[green!40] (-2.8, -1) ellipse (0.3 and 0.5);
            \node at (-3, 1) {$\mu_1$};
            \node at (-1.8, 0) {$\mu_2$};
            \node at (-2.8, -1) {$\mu_3$};

            \draw[thick] (3, 0) circle (1.5);
            \node at (3, -2.5) {Latent Sphere ($\mathbb{S}^{d-1}$)};
            
            \coordinate (xi1) at (3, 1.5); 
            \coordinate (xi2) at (4.3, -0.75); 
            \coordinate (xi3) at (1.7, -0.75); 

            \fill[blue!80!black] (xi1) circle (0.1) node[above] {$\xi_1$};
            \fill[red!80!black] (xi2) circle (0.1) node[right] {$\xi_2$};
            \fill[green!80!black] (xi3) circle (0.1) node[left] {$\xi_3$};

            \draw[->, thick, blue, bend left=15] (-2.5, 1) to (2.8, 1.4);
            \draw[->, thick, red] (-1.3, 0) to (4.1, -0.6);
            \draw[->, thick, green, bend right=15] (-2.5, -1) to (1.9, -0.6);

            \draw[dashed, gray] (xi1) -- (xi2) -- (xi3) -- cycle;
            \node[gray, text width=2cm, align=center] at (3, 0.4) {\scriptsize symmetric discrete measure};
        \end{tikzpicture}
        \caption*{(A) Class-Preserving Feature Collapse}
    \end{minipage}
    \hfill
    \begin{minipage}{0.48\textwidth}
        \centering
        \begin{tikzpicture}[>=stealth, scale=0.8]
            \draw[thick, dashed, fill=gray!5, rounded corners] (-4, -2) rectangle (-1, 2);
            \node at (-2.5, -2.5) {Data Space ($\mathbb{R}^D$)};
            
            \fill[blue!40] (-3, 1) ellipse (0.4 and 0.3);
            \fill[red!40] (-1.8, 0) ellipse (0.5 and 0.4);
            \fill[green!40] (-2.8, -1) ellipse (0.3 and 0.5);
            \node at (-3, 1) {$\mu_1$};
            \node at (-1.8, 0) {$\mu_2$};
            \node at (-2.8, -1) {$\mu_3$};

            \draw[thick] (3, 0) circle (1.5);
            \node at (3, -2.5) {Latent Sphere ($\mathbb{S}^{d-1}$)};
            
            \coordinate (xi1) at (3, 1.5); 
            \coordinate (xi2) at (4.3, -0.75); 
            \coordinate (xi3) at (1.7, -0.75); 

            \fill[gray!70!black] (xi1) circle (0.1) node[above] {$\xi_1$};
            \fill[gray!70!black] (xi2) circle (0.1) node[right] {$\xi_2$};
            \fill[gray!70!black] (xi3) circle (0.1) node[left] {$\xi_3$};

            \draw[->, thick, blue, bend left=15] (-2.5, 1) to (2.8, 1.4); 
            \draw[->, thick, blue] (-2.5, 1) to (4.1, -0.6); 
            
            \draw[->, thick, red] (-1.3, 0) to (2.8, 1.4); 
            \draw[->, thick, red, bend right=15] (-1.3, 0) to (1.9, -0.6); 
            
            \draw[->, thick, green, bend right=15] (-2.5, -1) to (4.1, -0.6); 
            \draw[->, thick, green] (-2.5, -1) to (1.9, -0.6); 


            \draw[dashed, gray] (xi1) -- (xi2) -- (xi3) -- cycle;
            \node[gray, text width=2cm, align=center] at (3, 0.4) {\scriptsize symmetric discrete measure};
        \end{tikzpicture}
        \caption*{(B) Class-Scrambling Collapse (Bad Clustering)}
    \end{minipage}
    
    \caption{Visualizing good and bad local minimizers of the NT-Xent loss from \Cref{thm:xent-min}. \textbf{A:} Feature map $f$ correctly preserves the true data topology, mapping distinct classes to symmetric cluster centers such that $f_\#\mu$ satisfies \cref{eq:sym-cond}. This is a successful, desirable result for clustering. \textbf{B:} An alternative feature map, also a stable local minimizer due to the perfectly symmetric target discrete measure, but 'bad' for clustering. It collapses the true data topology, meaninglessly reassigning data points from different classes into the generic target masses.}
    \label{fig:dirac_collapse_comparison}
\end{figure}

The numerical experiments in \Cref{fig:exp-cluster} further demonstrate that the finite-temperature NT-Xent loss exhibits pronounced plateau behavior for such clustered configurations. Increasing the number of clusters or separating them further improves the loss only up to a temperature-dependent threshold, beyond which many distinct discrete configurations achieve nearly identical objective values. Crucially, this plateau implies that as the number of clusters grows toward infinity, the asymptotic regime approximating the continuous uniform measure, the loss value remains virtually indistinguishable from that of finite, discrete configurations. This reveals that, rather than possessing a uniquely isolated global minimum, the loss landscape is characterized by a vast, complex valley of minima where perfectly uniform representations and highly degenerate discrete collapses are evaluated almost equally.

These observations highlight a critical limitation in explaining contrastive learning solely through the lens of the objective function. If the finite-temperature NT-Xent landscape is uniformly flat across an abundance of locally stable, degenerate configurations that completely ignore the underlying data distribution $\mu$, why does practical contrastive training consistently find meaningful representations?

In practice, training begins from a random parameterization in a neural network, not from an arbitrary stationary function. Therefore, the trajectory is governed by the training dynamics induced by the network's architecture and gradient flow, rather than simply descending into the nearest stationary trap of the functional objective. This dictates that the neural network parameterization and its optimization dynamics act as the crucial \emph{selection mechanism}. Understanding this mechanism, and identifying the exact dynamical conditions under which training avoids meaningless dimensional collapse and preferentially triggers the separation of true latent classes, is the focus of the subsequent sections.

\begin{figure}[!t]
    \centering
    \includegraphics[width=1.0\linewidth]{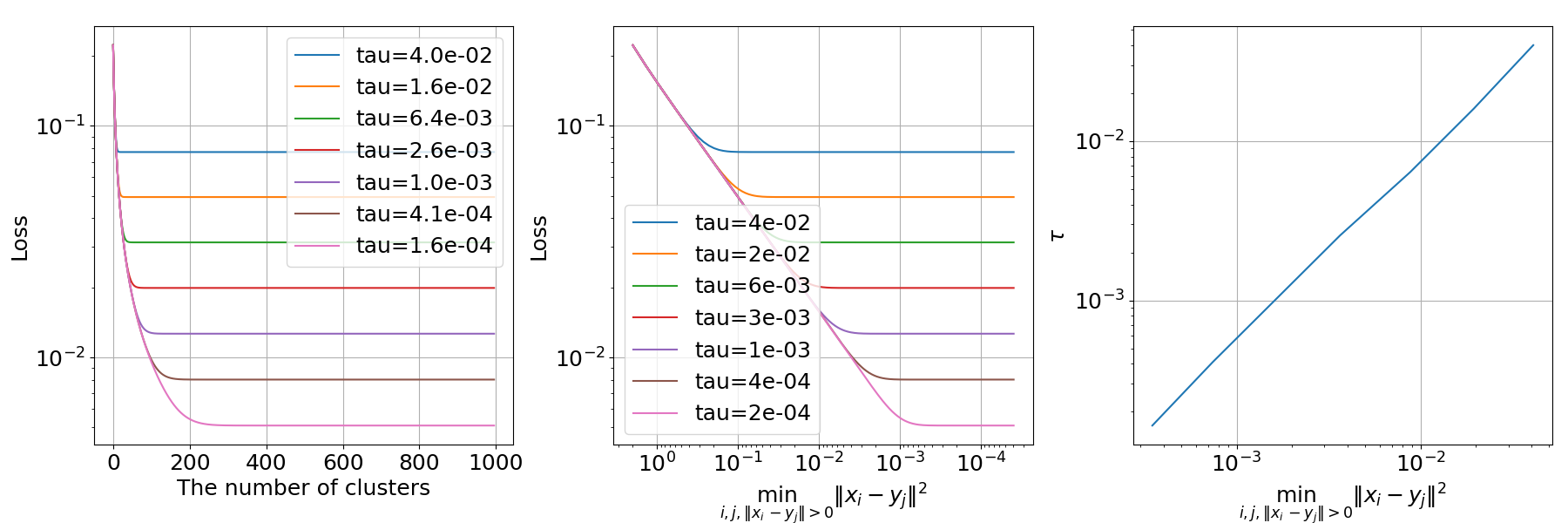}
        \caption{
 The figure shows the NT-Xent loss for different embedded distributions $f_\#\mu = \frac{1}{K}\sum^{K}_{i=1} \delta_{x_i}$ with $x_i$ on $\mathbb{S}^1$. The first plot shows the loss decreasing with the number of clusters, then plateauing. The second shows the loss decreasing with the minimum squared distance between cluster points, stopping at a threshold. Both suggest that increasing clusters and decreasing $\tau$ reduce the loss. The third plot shows a linear relationship between $\tau$ and the minimum distance.
            }
    \label{fig:exp-cluster}
\end{figure}

\section{Optimization of Neural Networks}\label{sec:opt-nn}
In this section, we begin our study of contrastive learning by focusing on the optimization of neural networks. Our goal is to understand how the training dynamics of a neural network shape the learned feature representation and how the structure of the data distribution enters the latent space through the neural kernel.

We first introduce a gradient flow formulation of neural network training for contrastive objectives. This viewpoint allows us to describe the evolution of features continuously in time and to analyze which properties of the kernel and the data are essential for producing meaningful representations. Throughout this section, we use the notation $\lrbr{n} = \{1,\dots,n\}$.

The main purpose of this section is therefore to clarify how contrastive learning can be expressed in terms of neural network gradient dynamics and to identify the key conditions under which informative and structured features emerge during training. For better readability of the presentation, most of the technical proofs are deferred to the appendix.

\subsection{Gradient flow from neural network parameters}

Let $w \in \mathbb{R}^m$ be a vector of neural network parameters, $\{x_1, \ldots, x_n\} \subset \mathbb{R}^D$ be data samples, and $f(w, x_i) = (f^1(w, x_i), \ldots, f^d(w, x_i))^{\top} \in \mathbb{R}^d$ be an embedding function where each function $f^k: \mathbb{R}^{n+D} \to \mathbb{R}$ is a scalar function for $k = 1, \ldots, d$. Consider a loss function $L = L(y,x): \mathbb{R}^{d+D} \to \mathbb{R}$ with respect to $w$
\begin{equation}\label{eq:loss-w}
    \mathcal{L}(w) = \frac{1}{n} \sum_{i=1}^n L(f(w, x_i),x_i).
\end{equation}
Let $w(t)$ be a vector of neural network parameters as a function of time $t$. The gradient descent flow can be expressed as 
\[
\dot{w}(t) = -\nabla \mathcal{L}(w).
\]
Because $\mathcal{L}$ is highly non-convex, analyzing its gradient flow in weight space is notoriously difficult. By shifting the focus from weight dynamics to the temporal evolution of the neural network's outputs on the training data, we can derive an alternative gradient flow with more favorable properties for analysis. The following proposition characterizes this gradient flow as derived from the loss function $\mathcal{L}$. The proof of this proposition is provided in the appendix and follows the methodology established in \cite{calder2025introduction}.

\begin{proposition}\label{prop:w-gf}
    Let $w(t)$ be a vector of neural network parameters as a function of time $t$. Consider a set of data samples $\{x_1, \ldots, x_n\}$. Define $z_i(t) = f(w(t), x_i)$ for each $i \in \lrbr{n}$.
    Then, $z_i(t)$ satisfies the following ordinary differential equation (ODE):
    \begin{equation}\label{eq:gf-z}
        \begin{aligned}
            \dot{z}_i(t) &= - \frac{1}{n}\sum_{j=1}^{n} K_{ij}(t) \nabla_z L(z_j(t),x_j),
        \end{aligned}
    \end{equation}
    where the kernel matrix $K_{ij} \in \mathbb{R}^{d\times d}$ is given by
    \begin{align}\label{eq:kernel-K}
        (K_{ij}(t))^{kl} 
        = K^{kl}_{ij}(t)
        = (\nabla_w f^{k}(w(t), x_i) )^\top(\nabla_w f^{l}(w(t), x_j)).
    \end{align}
\end{proposition}
\begin{remark}\label{rem:ntk}
The viewpoint in \cref{prop:w-gf}, which lifts the training dynamics from neural network parameters to a function space formulation, is closely related to the perspective underlying the Neural Tangent Kernel (NTK) framework \cite{jacot2018neural}. Analyzing continuous-time gradient flow in function space has become a central tool for understanding the optimization and evolution of parameterized models. In the classical NTK regime \cite{lee2019wide}, one considers an infinite-width limit in which the neural tangent kernel remains effectively constant during training. As a result, the dynamics reduce to gradient descent in a reproducing kernel Hilbert space with a fixed kernel.

In contrast, throughout our analysis we do not assume an infinite-width network, thereby remaining closer to the finite-dimensional regimes encountered in practical neural network training. In this setting, the kernel is evaluated directly on the training data and evolves during training, leading to kernel matrices that are both data dependent and time varying. The necessity of modeling these dynamic, time-varying function space evolutions has been increasingly recognized in rigorous mathematical treatments of deep networks and transformers \cite{huang2020dynamics, geshkovski2025mathematical}. This feature plays a central role in the analysis that follows.

It is also important to emphasize that \cref{prop:w-gf} is quite general and does not rely on the specific structure of neural networks. The result holds for any parameterized function class, and up to this point we have not used any properties particular to neural network architectures. This universality of function space dynamics aligns with broader findings that generalized gradient descent on finite parameterized models naturally integrates a time-varying tangent kernel \cite{domingos2020every}.
\end{remark}
\begin{remark}
    The training dynamics in the absence of a neural network can be expressed as
    \begin{equation}\label{eq:z-no-nn}
        \dot{z}_i(t) = -  \nabla_z L(z_i(t),x_i)
    \end{equation}
    where $K_{ij}$ is set to be identity matrices. In contrast to \cref{eq:gf-z}, the above expression shows that the training dynamics on the $i$-th point $z_i$ are influenced solely by the gradient of the loss function  at $x_i$, and there is no mixing of the data via the neural kernel $K$ (since here it is the identity matrix). 
\end{remark}

Using \Cref{prop:w-gf}, we can analyze the invariance-preserving properties (and possible failures) of gradient descent, both without and with the neural network kernel. The following proposition compares the vanilla gradient flow with the gradient flow induced by the neural network kernel matrix. The proof is provided in the appendix.
\begin{proposition}\label{prop:nn-toy-map}
    Consider the gradient descent iteration from a gradient flow without a neural network in \cref{eq:z-no-nn}, where $z_i^{(k)} = f(w^{(k)}, x_i)$ for all $i \in \lrbr{n}$, and
    \begin{equation}\label{eq:z-no-nn-gd}
        z_i^{(k+1)} = z_i^{(k)} - \sigma \nabla_z L(z_i^{(k)}, x_i),
    \end{equation}
    with $\sigma$ as the step size. If $f(w^{(0)}, \cdot)$ is invariant to perturbations from $\nu$, as defined in \cref{eq:inv},
    then $f(w^{(k)}, \cdot)$ remains invariant for all gradient descent iterations.
    
    On the other hand, in the case of a gradient descent iteration from \cref{eq:gf-z},
    \begin{equation}\label{eq:z-nn-gd}
        z_i^{(k+1)} = z_i^{(k)} - \frac{\sigma}{n} \sum_{j=1}^n K_{ij}^{(k)} \nabla_z L(z_j^{(k)}, x_j),
    \end{equation}
    the invariance of $f$ at the $(k+1)$-th iteration holds only if $f$ is invariant at the $k$-th iteration and additionally satisfies the condition $\nabla_w f(w^{(k)}, T(x)) = \nabla_w f(w^{(k)}, x)$ for all $x \in \mathbb{R}^D$ and $T \in \mathcal{T}$.
\end{proposition}

\Cref{prop:nn-toy-map} highlights a fundamental distinction between optimization dynamics with and without neural network parameterization. In the standard gradient descent update \cref{eq:z-no-nn-gd}, the evolution takes place directly in feature space. In this case, the updates depend only on the loss gradients at the feature points themselves, and there is no coupling induced by a parameterized map. Consequently, if the initial representation $f(w^{(0)},\cdot)$ is invariant to perturbations from $\nu$, this invariance is preserved throughout the iterations.

From another perspective, this setting corresponds to the special case where the kernel matrix is the identity. The gradient flow then acts independently on each feature point $z_i$, and the optimization behaves like a purely local descent driven solely by the loss function, without any interaction mediated by a parameterized model.

In contrast, when the representation is induced by a neural network, the update \cref{eq:z-nn-gd} involves the neural kernel matrix $K^{(k)}$. This introduces a coupling between feature points, as each update now depends on gradients evaluated at all other data points. As a result, even if the representation is initially invariant, maintaining this invariance requires an additional structural condition on the parameter gradients, namely that $\nabla_w f(w^{(k)},T(x))=\nabla_w f(w^{(k)},x)$ for all transformations $T\in\mathcal{T}$. Since this condition is generally not preserved during training, the optimization can break the initial invariance and lead to fundamentally different dynamics from those of standard gradient descent.

This illustrates a key role of neural parameterization: the kernel induced by the network governs how information propagates across data points during training. A large body of work has demonstrated that this coupling substantially alters optimization behavior. For instance, \cite{xu2019frequency,xu2019training} established the frequency principle, showing that neural network training dynamics are strongly biased toward low frequency components, in sharp contrast to the behavior of vanilla gradient descent.


\subsection{Kernel Dynamics and Finite-Time Linear Separability}\label{sec:kernel_separation}

The purpose of this section is to explain, at a mechanistic level, how meaningful feature structure and linear separability emerge from neural network training under contrastive learning. Building on the previous discussion, the central object of interest is the \emph{time-evolving neural kernel}, which governs how gradient information propagates across data points during training. Our goal is to understand how the dynamics induced by this kernel transform initially unstructured features into linearly separable representations.

Rather than working in a lazy or linearized regime, we analyze the fully nonlinear training dynamics in which the kernel itself evolves over time. The key idea is to decompose the argument into two conceptual steps. First, we show that, under mild regularity assumptions, the short-time evolution of features admits an exact representation in terms of a kernel operator that can be interpreted as a graph Laplacian acting on the data. Second, we identify a quantitative \emph{spectral dominance condition} on this kernel that guarantees the emergence of linear separability within finite time.

The assumptions required to derive the kernel dynamics are and mild. The only substantive requirement is a spectral discrepancy condition, which characterizes when the evolving kernel exhibits sufficient contrast between within-cluster and cross-cluster interactions. We demonstrate empirically that such a spectral structure naturally develops during training. Explaining why nonlinear kernel dynamics tend to produce this spectral organization is an important question in its own right and lies beyond the scope of this work. Our focus here is instead to isolate and characterize the mechanism that triggers linear separability once this spectral condition is in place.

For clarity of exposition, we present the analysis in the simplest setting of 1D features. This setting already captures the essential mechanism by which evolving kernel dynamics generate spectral dominance and induce separation. Extensions to higher-dimensional feature spaces are conceptually straightforward and require only notational generalizations. We adopt this simplified framework to make the core ideas transparent without introducing unnecessary technical complexity.


\subsection{Augmented Dataset Formulation}

Let $\{x_1,\dots,x_n\}$ be a dataset, partitioned into $N$ semantic clusters
$C_1,\ldots,C_N$.
For each $i=1,\dots,n$, we generate $m$ augmented samples
\[
x_{i,1},\dots,x_{i,m},
\]
and define the augmented dataset
\[
\widetilde X := \{x_{i,a} : i=1,\dots,n,\ a=1,\dots,m\},
\qquad N := nm.
\]
Let $f(w,\cdot)$ be a neural network with scalar output, and define the features
\[
z_{i,a}(t) := f(w(t),x_{i,a}) \in \mathbb R,
\]
where $w(t)$ evolves by gradient flow of the empirical contrastive loss
\begin{equation}\label{eq:augmented_contrastive_loss}
\widetilde{\mathcal L}(z)
=
\frac{1}{nm^2}
\sum_{i,a,b}
\log
\frac{
\frac{1}{nm}\sum_{j,b}
\exp\left(-\frac{(z_{i,a}-z_{j,b})^2}{2\tau}\right)
}{
\exp\left(-\frac{(z_{i,a}-z_{i,b})^2}{2\tau}\right)
}.
\end{equation}
While Equation \eqref{eq:augmented_contrastive_loss} is heavily inspired by the NT-Xent loss introduced in SimCLR, it features several key modifications designed for theoretical tractability. Standard SimCLR operates on high-dimensional representations projected onto a unit hypersphere and relies on cosine similarity. In contrast, our formulation considers scalar features $z \in \mathbb{R}$ and employs the negative squared Euclidean distance, a necessary substitution since cosine similarity is degenerate in one dimension. 

In practice, contrastive learning typically trains on small batches and randomly generates a few augmented views per data point at each step. In our formulation, we use a fixed set of $m$ augmented views for the entire dataset and study the deterministic gradient flow.  While this approach removes the randomness of constantly sampling new augmentations, it still captures the essential learning dynamics. Randomly generating views during training estimates the average effect of all possible augmentations over time. By using a fixed set of $m$ views, our model follows this average, expected path. Thus, the core mechanisms driving feature separation are preserved, meaning the deterministic flow accurately reflects the expected behavior of the stochastic process. 

The primary advantage of this simplification is that it provides a clear, tractable framework. By stripping away the unpredictable fluctuations of random sampling, we are able to directly isolate and prove how the structural properties of the augmentations drive the emergence of linear separability.

Despite these structural simplifications, the formulation preserves the core contrastive mechanics of SimCLR. The objective retains the fundamental InfoNCE log-sum-exp structure, thereby maintaining the essential ``push-pull'' dynamics. The denominator explicitly minimizes the distance between two distinct augmented views ($a$ and $b$) of the same underlying sample $i$ (positive pairs), while the numerator sum maximizes the distance between a view and all other samples in the dataset (negative pairs). The temperature parameter $\tau$ likewise continues to govern the scale of these similarity penalties. By isolating the pure effect of this contrastive objective, this idealized framework directly enables us to formulate exact differential equations for the feature evolution $z(t)$ and formally prove the conditions under which features successfully separate into distinct semantic clusters.

\subsection{Geometric Formulation and Augmentation Connectivity}\label{sec:geometric_formulation}

To bridge the gap between microscopic instance variance (which the contrastive loss minimizes) and macroscopic class dispersion (which we desire for clustering), we must first formalize the geometric properties of the latent space and the augmentation overlap graph. For a dataset consisting of samples $\{x_i\}_{i=1}^n$, suppose that each sample admits $m$ augmented versions $\{x_{i,a}\}_{a=1}^m$. For a feature map $f$, define the corresponding feature vectors $z_{i,a} = f(x_{i,a})$, and let $z = \{z_{1,1}, \dots, z_{n,m}\}$ denote the collection of all feature vectors. Assume the dataset is partitioned into $N$ clusters $\{C_k\}_{k=1}^N$.

We define the sample-mean projection $P_{\mathrm{mean}}$ and the class-mean projection $P_{\mathrm{class}}$ as orthogonal projection matrices. For $P_{\mathrm{mean}}$, we define $n$ mutually orthonormal vectors $u_1, \dots, u_n \in \mathbb{R}^{nm}$. The $(j, b)$-th entry of $u_i$ is $1/\sqrt{m}$ if $j=i$, and $0$ otherwise. Letting $U_{\mathrm{mean}} = [u_1, \dots, u_n]$, the orthogonal projection is given by $P_{\mathrm{mean}} = U_{\mathrm{mean}} U_{\mathrm{mean}}^\top$. Similarly, for $P_{\mathrm{class}}$, we define $N$ mutually orthonormal vectors $v_1, \dots, v_N \in \mathbb{R}^{nm}$. The $(i,a)$-th entry of $v_k$ is $1/\sqrt{m|C_k|}$ if $i \in C_k$, and $0$ otherwise. Because the classes $C_k$ do not overlap, these vectors are orthogonal. Letting $U_{\mathrm{class}} = [v_1, \dots, v_N]$, the projection is $P_{\mathrm{class}} = U_{\mathrm{class}} U_{\mathrm{class}}^\top$.

Applying these projection matrices to the feature matrix $z \in \mathbb{R}^{nm \times d}$ yields
\[
\begin{aligned}
    Tr(z^\top P_{\mathrm{mean}} z) &= \sum_{i=1}^n \frac{1}{m} \left\| \sum_{a=1}^m z_{i,a} \right\|^2, \\
    Tr(z^\top P_{\mathrm{class}} z) &= \sum_{k=1}^N \frac{1}{m|C_k|}\left\| \sum_{i\in C_k} \sum_{a=1}^m z_{i,a} \right\|^2.
\end{aligned}
\]
These projection operators measure how features align with different data averages. $P_{\mathrm{mean}}$ captures instance-level centers by averaging the augmented views of each specific sample. $P_{\mathrm{class}}$ captures cluster-level centers. The orthogonal complement $(I - P_{\mathrm{mean}})$ measures the intra-instance augmentation variance, the physical spread of augmented views diverging from their base sample.

To provide a clear geometric framework for our analysis, we define the augmentation graph Laplacian, $L_{\mathrm{aug}} \in \mathbb{R}^{nm \times nm}$. For any two samples $(i,a)$ and $(j,b)$, the connection weight is determined by the similarity between the raw data points $x_{i,a}$ and $x_{j,b}$ in the input space. By using these geometric operators and the graph Laplacian, we can clearly define what constitutes a ``good'' augmentation strategy. Specifically, we assume that the connections within the same class are strong, while the connections between different classes remain weak or non-existent.

Building upon these geometric operators and the graph laplacain from augmented points, we formalize our requirements for the dataset's augmentation strategy. This assumption replaces abstract isolation metrics with the physical structure of the data graph.

\begin{assumption}[Augmentation Overlap Graph]
\label{assum:augmentation_graph}
For each cluster $k$ and its corresponding normalized graph Laplacian $L_{\mathrm{aug}}$, we assume the augmentation strategy satisfies two properties
\begin{enumerate}
    \item \textbf{Graph Connectivity ($\lambda_2$):} The augmentations form a connected graph across the cluster, meaning its connectivity satisfies $\lambda_2(L_{\mathrm{aug}}) > 0$.
    \item \textbf{Local-Global Variance Bound ($\eta$):} The neural network constrains the global feature variation across the connected augmentation graph proportionally to the local intra-instance variance. Formally, there exists a bounding constant $\eta > 0$ such that total Dirichlet energy satisfies $z^\top L_{\mathrm{aug}} z \leq \eta \|z\|^2$.
\end{enumerate}
\end{assumption}

For contrastive learning to successfully group similar data points together, our data augmentations must meet two key conditions. First, the augmentations must be balanced to appropriately link related data without scrambling the entire dataset. Properly tuned augmentations cast a wide net, creating variations of one image that overlap with variations of a neighboring, similar image. This overlap ties related data points together into a highly connected web (yielding a large $\lambda_2$). 

However, there are two ways this connectivity can fail. On one extreme, if we use weak augmentations, each original image is left isolated on its own island, breaking the web apart ($\lambda_2 \to 0$). On the other extreme, if we use overly aggressive augmentations, the variations become completely generic. This overly strong augmentation entangles every data point with every other data point in the dataset, regardless of their actual class. The web becomes a completely tangled, indistinguishable mess, blurring the boundaries between concepts and forcing the network to collapse all features together. 

Second, the network must obey the local-global variance bound ($\eta$). In simple terms, this means the network shouldn't produce wild, unpredictable features. The total feature differences across the entire connected web (the global variation) must be kept in check by how much the features vary when we just augment a single datum (the local variation). 

With our structural assumptions on the augmentation graph formalized, we gain a crucial geometric property linking local data augmentations to the global class structure. While contrastive learning inherently pulls augmented views of the same data point together to reduce local variation, our graph assumptions propagate this local behavior to the entire class.

The following proposition demonstrates that the deviation between a point's average representation and its overall class average, given by $\|(P_{\mathrm{mean}} - P_{\mathrm{class}})z\|^2$, is bounded by this local variation scaled by $\frac{\eta}{\lambda_2}$. A graph that is both well-connected and well-clustered exhibits a larger spectral gap $\lambda_2$, which forcefully shrinks this bound. Consequently, if the network successfully minimizes local augmentation variation, these structural parameters guarantee that all points within a class stay tightly grouped around their shared center.

\begin{proposition}[Bound on Class Dispersion]\label{prop:poincare_dispersion}
Suppose Assumption \ref{assum:augmentation_graph} holds. Then
\[
\|(P_{\mathrm{mean}} - P_{\mathrm{class}}) z\|^2 \leq \frac{\eta}{\lambda_2} \|z\|^2.
\]
\end{proposition}
\begin{proof}
Define $u = z - P_{\mathrm{class}}z$. The null space of $L_{\mathrm{aug}}$ is spanned by the global constant vector $\mathbf{1}$. By definition, $P_{\mathrm{class}}z$ assigns each sample the mean of its respective class, ensuring that the sum of the elements in $u$ over any class $\mathcal{C}_k$ is exactly zero. Thus, the global sum of $u$ is zero, meaning $u \perp \mathbf{1}$. Since $u$ is orthogonal to the null space of $L_{\mathrm{aug}}$, the Poincaré inequality yields
\begin{equation}\label{pru1}
    \|u\|^2 \leq \frac{1}{\lambda_2} u^\top L_{\mathrm{aug}} u
\end{equation}
By \Cref{assum:augmentation_graph} and using the fact that $\|u\| = \|(I - P_{\mathrm{class}})z\| \leq \|z\|$, we have
\begin{equation}\label{prmean2}
    u^\top L_{\mathrm{aug}} u \leq \eta \|u\|^2 = \eta \|z\|^2
\end{equation}
Combining \cref{pru1} and \cref{prmean2}
\begin{equation}\label{prumean1}
    \|u\|^2 \leq \frac{\eta}{\lambda_2} \| z\|^2
\end{equation}

Next, we orthogonally decompose $u$ as follows
\[
u = z - P_{\mathrm{class}}z = (I - P_{\mathrm{mean}})z + (P_{\mathrm{mean}} - P_{\mathrm{class}})z
\]
By definition, $(I - P_{\mathrm{mean}})z$ and $(P_{\mathrm{mean}} - P_{\mathrm{class}})z$ are orthogonal, yielding
\[
\|u\|^2 = \|(I - P_{\mathrm{mean}})z\|^2 + \|(P_{\mathrm{mean}} - P_{\mathrm{class}})z\|^2 \geq \|(P_{\mathrm{mean}} - P_{\mathrm{class}})z\|^2
\]
Combining the above with \cref{prumean1} yields the desired result.
\end{proof}

\subsection{Linearization of the Contrastive Loss Near Initialization}

To understand how contrastive learning behaves at the very beginning of training, we look at the loss function when the network's features are still very small (close to zero). In this early stage, we can use a low-order Taylor expansion to approximate the complex, nonlinear loss function with a much simpler quadratic shape.

This quadratic approximation is governed by a specific matrix structure. Essentially, the loss acts as a tug-of-war between two forces: a global repulsion that pushes all features away from each other to spread them out, and an attraction that pulls different augmented versions of the same data point together.

By breaking the loss down this way, we can clearly see the spectral structure that drives early training. It shows us exactly why the eigenvalues and eigenvectors of this matrix control when features begin to separate, setting the stage for the spectral triggers we analyze later. 

Crucially, the geometric connectivity defined in \Cref{assum:augmentation_graph} directly controls the validity of our theoretical results in this regime. In the early training approximation (\Cref{prop:contrastive_taylor} and \Cref{prop:local_ntk_window}), the ratio $\eta/\lambda_2$ explicitly bounds the residual error of the Taylor expansion. A well-connected augmentation graph allows the linear repulsive operator $H$ to cleanly dominate the gradient flow, ensuring the macroscopic features separate robustly.

\begin{proposition}[Linearization of the Contrastive Loss] \label{prop:contrastive_taylor}

Consider the augmented contrastive loss functional from \cref{eq:augmented_contrastive_loss}. Under \Cref{assum:augmentation_graph}, in a restricted neighborhood of the origin $z=0$, the gradient of the loss admits the expansion
\[\nabla_z \widetilde{\mathcal L}(z) 
= - \frac{1}{\tau} Hz + \mathcal{O}\left(\frac{1}{nm\tau} \sqrt{\frac{\eta}{\lambda_2}} \|(I - P_{\mathrm{mean}}) z\| + \frac{\|z\|^3}{\tau^2(nm)^2}\right).\]
Here, $H$ is defined as
\[H = \frac{2}{nm} (P_{\mathrm{class}} - P_{\mathrm{global}}),\]
where $P_{\rm{global}} = \frac{1}{nm} \mathbf{1}^\top \mathbf{1}$ is defined as finds the overall average of every feature across the dataset.
\end{proposition}

\begin{remark}[Dynamics of $H$]
The matrix $H$ is a block matrix that drives the main behavior of the contrastive loss. The first operator, $P_{\mathrm{class}}$, acts as an attractive force that pulls data points toward the center of their own class. The second operator, $P_{\mathrm{global}}$, acts as a repulsive force over all the data. By taking the difference between these two, $H$ effectively creates a dynamic that attracts data points within the same class while repelling data points from different classes.
\end{remark}

Building on this approximation, the next proposition explains feature evolution during early training. If initialized with very small features, the complex training process can be modeled accurately as a simple linear process driven by the NTK. 

\begin{proposition}[Local NTK linearization window] \label{prop:local_ntk_window}

Let $\{x_i\}_{i=1}^N$ be training samples and let $z_{i,a}(t)$ denote the feature values produced by a neural network evolving under gradient flow of the augmented contrastive loss $\dot\theta(t) = -\nabla_\theta \widetilde{\mathcal L}(\theta(t)).$ Let $\varepsilon>0$ and $\tau>0$ be the temperature parameter and let $z(t)\in\mathbb R^{nm}$ denote the stacked feature vector. Assume:

\begin{itemize}
\item[(i)] There exist constants $C_K>0$ and $T_0>0$ such that the kernel matrix satisfies 
\[\|K(t)\| \leq C_K \quad\text{for all } t \in [0,T_0].\]

\item[(ii)] The initialization satisfies $\|z(0)\| \leq \varepsilon,$ where $\varepsilon>0$ is sufficiently small so that $\varepsilon^{3/4} \ll \tau.$ 

\item[(iii)] From \Cref{prop:contrastive_taylor}, and noting that $\|(I-P_{\mathrm{mean}})z\| \le \|z\|$, there exists a constant $C_R$ such that
\[\|\nabla_z \widetilde{\mathcal{L}}(z) + \frac{1}{\tau} Hz\| \leq C_R \left(\frac{\|z\|}{nm\tau}\sqrt{\frac{\eta}{\lambda_2}} + \frac{\|z\|^3}{(nm)^2\tau^2}\right).    \]
\end{itemize}

Define the time window
\[T_\varepsilon:= \min\!\left\{ T_0,\; \frac{nm\tau}{2C_K\left(2 + C_R\sqrt{\frac{\eta}{\lambda_2}}\right)}\log \left(\frac{1}{\varepsilon}\right) \right\}.\]
For all $t\in[0,T_\varepsilon]$, the gradient flow is linearized as follows
\[\dot z(t) =
\frac{1}{\tau} K(t) H z(t) + R(t), \quad \|R(t)\|
\leq
C_K C_R \left(\frac{\varepsilon^{1/2}}{nm\tau}\sqrt{\frac{\eta}{\lambda_2}} + \frac{\varepsilon^{3/2}}{(nm)^2\tau^2}\right).\]
Furthermore, the norm of the features is controlled by
$\|z(t)\|\leq \sqrt{\varepsilon}$ for all $t \in [0, T_\varepsilon].$
\end{proposition}

\Cref{prop:local_ntk_window} confirms that we can safely view the early stages of training entirely through the lens of kernel dynamics. As long as the network starts with small initial values, its complex learning trajectory acts like a straightforward linear process driven by the NTK for a meaningful amount of time.

\subsection{The spectral mechanism for phase transition}

We now introduce the precise mechanism that drives the transition from a messy, entangled dataset to one that is cleanly separated. The core idea is that the evolving kernel dynamics naturally amplify a specific contrastive mode in the data. Once the kernel's spectral alignment, measured by the coercivity metric $\delta$ defined below, reaches a critical positive threshold, this single mode takes over.

\begin{theorem}[Finite-Time Weak Linear Separation]\label{thm:lin_sep}
Suppose the features $z(t)$ evolve under the augmented contrastive loss gradient flow, yielding a local linearization window $[t_*,\, t_* + T_\varepsilon]$ per \Cref{assum:augmentation_graph,prop:local_ntk_window}. 

Let $v$ be the contrast vector that takes the value $1$ on the first class and $-1$ on the second class. Let $v_c = v/\|v\|$ be the normalized contrast direction ($P_{\mathrm{global}} v_c = 0$). Define the inter-cluster signal $\alpha(t)=v_c^\top z(t)$, the intra-cluster residual $w(t)=(I - P_{\mathrm{class}}) z(t)$, and the weak separation margin $\Phi(t)=\alpha(t)-\|w(t)\|$, which measures the discrepancy between the inter-cluster separation and the intra-cluster spread.

Assume the minimum spectral alignment $\delta := \min_{t\in[t_*,t_*+T_\varepsilon]} v_c^\top K(t) v_c$ is sufficiently large such that
\begin{equation}\label{eq:initial_feature_condition}
    \gamma:=\frac{C_K C_R}{2\delta} \left( \sqrt{\frac{\eta}{\lambda_2}} \varepsilon^{1/2} + \frac{nm\,\varepsilon^{3/2}}{\tau} \right) < \min\left\{\alpha(t_*), \sqrt{\frac{\varepsilon}{2}} \right\} .
\end{equation}
Then, there exists a separation time $T_{\mathrm{sep}} \in [t_*,\,t_*+T_\varepsilon]$ such that $\Phi(t)>0$ for all $t\in[T_{\mathrm{sep}},\,t_*+T_\varepsilon]$. Consequently, the classes become weakly linearly separable along $v_c$ within the active time window.
\end{theorem}
The theorem models feature evolution as a competition between the inter-cluster separation signal $\alpha(t)$ and the intra-cluster dispersion noise $\|w(t)\|$. The margin $\Phi(t)$ measures which force dominates (illustrated in \Cref{fig:1d_weak_separation}). The spectral alignment $\delta$ drives the exponential growth of the signal $\alpha(t)$. However, the local linearization introduces an approximation error, bounded by $\gamma$. As shown in \Cref{fig:weak_separation_dynamics}, the condition $\gamma < \alpha(t_*)$ ensures the initial signal overpowers this error, allowing $\alpha(t)$ to grow exponentially within the linearization time window $[t_*, t_*+T_\varepsilon]$.

Because the total feature norm is bounded by $\varepsilon$ within the local time window, the signal and the dispersion are strictly coupled. As $\alpha(t)$ amplifies, the maximum possible noise $\|w(t)\|$ is forced to shrink. This guarantees that the signal overtakes the noise ($\Phi(t) > 0$), pulling the clusters cleanly apart before the linearization window expires.

\begin{figure}[ht]
\centering
\begin{tikzpicture}[scale=1.15, every node/.style={font=\normalsize}]

    \node[anchor=north west, font=\small] at (-5.5, 2.2) {\textbf{Separable Case} (Weak Linear Separation)};
    \draw[->, thick, gray!80] (-5.5, 0) -- (5.5, 0);
    
    \filldraw[black] (0,0) circle (2pt) node[below left=2pt] {\textbf{0}};

    \coordinate (CenterT_B) at (3.0, 0);
    \coordinate (CenterT_R) at (-3.0, 0);
    
    \filldraw[blue] (CenterT_B) circle (3pt) node[below=5pt, text=blue!80!black] {\small Mean $\mu_1$};
    \filldraw[red] (CenterT_R) circle (3pt) node[below=5pt, text=red!80!black] {\small Mean $\mu_2$};

    \foreach \x in {1.6, 2.2, 2.5, 3.1, 3.4, 3.8, 4.2, 4.6, 4.9} { \fill[blue!80!black] (\x, 0) circle (2.5pt); }
    \fill[blue!80!black] (-2.5, 0) circle (2.5pt); 

    \foreach \x in {-4.8, -4.4, -4.0, -3.5, -2.8, -2.2, -1.9, -1.6} { \fill[red!80!black] (\x, 0) circle (2.5pt); }
    \fill[red!80!black] (2.0, 0) circle (2.5pt); 
    \fill[red!80!black] (2.8, 0) circle (2.5pt); 


    \draw[<->, thick, blue!80!black] (-3.5, 1.2) -- (3.4, 1.2) node[midway, fill=white, inner sep=2pt] {\small Signal $\alpha(t)$};

    \draw[<->, thick, gray!80!black] (0.9, 0.7) -- (3.6, 0.7) node[midway, fill=white, inner sep=2pt] {\small $\|w(t)\|$};
    \draw[<->, thick, gray!80!black] (-4.0, 0.7) -- (-1.1, 0.7) node[midway, fill=white, inner sep=2pt] {\small $\|w(t)\|$};

    \draw[<->, thick, green!60!black] (-0.6, 0.4) -- (0.6, 0.4) node[midway, fill=white, inner sep=1pt] {\small $\Phi(t)$};

    \begin{scope}[yshift=-3.5cm]

    \node[anchor=north west, font=\small] at (-5.5, 2.5) {\textbf{Non-Separable Case} ($\Phi(t) \leq 0$, highly jumbled)};

    \draw[->, thick, gray!80] (-5.5, 0) -- (5.5, 0);
    
    \filldraw[black] (0,0) circle (2pt) node[below left=2pt] {\textbf{0}};

    \coordinate (CenterB_B) at (1.0, 0);
    \coordinate (CenterB_R) at (-1.0, 0);
    
    \filldraw[blue] (CenterB_B) circle (3pt) node[below=5pt, text=blue!80!black] {\small Mean $\mu_1$};
    \filldraw[red] (CenterB_R) circle (3pt) node[below=5pt, text=red!80!black] {\small Mean $\mu_2$};

    \foreach \x in {-1.8, -1.2, -0.5, 0.2, 0.6, 1.2, 1.7, 2.2, 2.6, 3.1} { \fill[blue!80!black] (\x, 0) circle (2.5pt); }
    \foreach \x in {-3.2, -2.5, -2.1, -1.6, -1.1, -0.6, -0.1, 0.4, 1.1, 1.9} { \fill[red!80!black] (\x, 0) circle (2.5pt); }


    \draw[<->, thick, blue!80!black] (-1.3, 1.6) -- (1.3, 1.6) node[midway, fill=white, inner sep=2pt] {\small Signal $\alpha(t)$};
    \draw[dotted, gray] (1.0, 0.3) -- (1.0, 1.8);

    \draw[<->, thick, gray!80!black] (-2.4, 1.1) -- (1.4, 1.1) node[midway, fill=white, inner sep=2pt] {\small $\|w(t)\|$ (red)};
    
    \draw[<->, thick, gray!80!black] (-1.4, 0.6) -- (2.4, 0.6) node[midway, fill=white, inner sep=2pt] {\small $\|w(t)\|$ (blue)};
    \end{scope}

\end{tikzpicture}

\caption{1D representation illustrating the contrastive learning dynamics. The inter-cluster signal $\alpha(t)$ reflects the separation between the class means. The dispersion $\|w(t)\|$ represents how dispersed the particles are within the class. \textbf{Top:} Weak linear separability where $\alpha > \|w\|$. A positive margin $\Phi(t) > 0$ exists between the dispersion bounds, even if a few outlier points mix across the boundary. \textbf{Bottom:} When $\alpha \leq \|w\|$, the dispersion overtakes the signal, causing a negative $\Phi(t)$ and heavily jumbled classes.}
\label{fig:1d_weak_separation}
\end{figure}

\begin{figure}[ht]
\centering
\begin{tikzpicture}[font=\normalsize]

    \draw[->, thick] (0, 0) -- (6.5, 0) node[right] {\small $t$};
    \draw[->, thick] (0, 0) -- (0, 4.5) node[above] {\small Amplitude};

    \draw[thick] (0, 2pt) -- (0, -2pt) node[below] {\small $t_*$};
    \draw[dashed, thick, gray] (5.5, 0) node[below, text=black] {\small $t_* + T_\varepsilon$} -- (5.5, 4);

    \fill[red!10] (0, 0) rectangle (6, 0.6);
    \draw[thick, red!80!black, dashed] (0, 0.6) -- (6, 0.6);
    \node[left, text=red!80!black] at (0, 0.3) {\small Error $\gamma$};

    \draw[thick, blue!80!black, smooth] 
        (0, 1.2) .. controls (2.0, 1.3) and (4.0, 2.0) .. (5.5, 4.0)
        node[right] {\small Signal $\alpha(t)$};
    \filldraw[blue!80!black] (0, 1.2) circle (2pt) node[left] {\small $\alpha(t_*)$};

    \draw[thick, orange!80!black, smooth] 
        (0, 3.0) .. controls (2.0, 2.8) and (3.5, 2.2) .. (5.5, 0.8)
        node[right] {\small Noise $\|w(t)\|$};

    \draw[<->, thick, blue!80!black] (0.5, 0.6) -- (0.5, 1.2);
    \node[right, text=blue!80!black, font=\small] at (0.5, 0.9) {$\alpha(t_*) > \gamma$};

    \draw[-stealth, thick, blue!50] (2.5, 2.8) -- (2.5, 3.8);
    \node[right, text=blue!80!black, font=\small, align=left] at (2.5, 3.4) {Growth driven\\by alignment $\delta$};

    \coordinate (cross) at (3.15, 2.15); 
    \filldraw[black] (cross) circle (2pt);
    \draw[dashed, thick] (cross) -- (3.15, 0) node[below] {\small $T_{\mathrm{sep}}$};

    \draw[decorate,decoration={brace,amplitude=5pt,mirror}, thick, green!60!black] 
        (3.15, -0.6) -- (5.5, -0.6) node[midway, below=6pt] {\small Separation: $\Phi(t) > 0$};

\end{tikzpicture}
\caption{The minimum alignment $\delta$ drives the separation signal $\alpha(t)$ to grow exponentially from its initial state. The condition $\gamma < \alpha(t_*)$ guarantees the error floor $\gamma$ does not suppress this initial growth. As the signal outpaces the noise $\|w(t)\|$, the system enters the strictly positive separation zone ($\Phi(t) > 0$) before the local linearization window expires at $t_* + T_\varepsilon$.}
\label{fig:weak_separation_dynamics}
\end{figure}

\begin{proof}
Consider the following decomposition of $z(t)$
\begin{equation}\label{thmpr1}
    z(t) = \alpha(t) v_c + w(t) + c(t)\mathbf{1},
\end{equation}
where $\alpha(t) = v_c^\top z(t)$ is the macroscopic inter-cluster separation amplitude, $w(t) = P_W z(t)$ is the total zero-mean intra-cluster dispersion vector, and $c(t)\mathbf{1} = P_{\mathrm{global}} z(t)$ is the uniform global shift. By definition of the orthogonal projection operators, the vectors $v_c$, $w(t)$, and $\mathbf{1}$ are mutually orthogonal.

Next, we determine how the linear operator $H$ defined in \Cref{prop:contrastive_taylor} affects on each vector. Because $v_c \in U_{\mathrm{class}}$, it follows $P_{\mathrm{class}} v_c = v_c$. Consequently, the operator evaluates to $H v_c = \frac{2}{nm} (P_{\mathrm{class}} - P_{\mathrm{global}}) v_c = \frac{2}{nm} v_c$. The operator completely removes the global mean, yielding $H(c(t)\mathbf{1}) = 0$. By definition, $w(t) \in \operatorname{Range}(I - P_{\mathrm{class}})$, which implies $P_{\mathrm{class}} w(t) = 0$. Because $w(t)$ inherently possesses a zero global mean, $P_{\mathrm{global}} w(t) = 0$. Substituting these identities perfectly removes the cross-coupling, yielding $H w(t) = 0$.

Applying $H$ to \cref{thmpr1} and putting into the linearized gradient flow equation $\dot{z}(t) = \frac{1}{\tau} K(t) H z(t) + R(t)$, we obtain
\begin{equation}
\dot{z}(t) = \frac{2}{nm\tau} \alpha(t) K(t) v_c + R(t).
\end{equation}
We extract the time evolution of the macroscopic separation amplitude $\alpha(t)$ by applying the fixed linear test function $v_c$. Taking the Euclidean inner product provides
\begin{equation*}
\dot{\alpha}(t) = v_c^\top \dot{z}(t) = \frac{2}{nm\tau} \alpha(t) v_c^\top K(t) v_c + v_c^\top R(t).
\end{equation*}
By \Cref{prop:local_ntk_window} and the definition of $\delta$, 
\begin{equation*}
\dot{\alpha}(t) \geq \frac{2\delta}{nm\tau}  \alpha(t) - C_K C_R \left(\frac{\varepsilon^{1/2}}{nm\tau}\sqrt{\frac{\eta}{\lambda_2}} + \frac{\varepsilon^{3/2}}{\tau^2}\right).
\end{equation*}
Solving the inequality, we obtain
\begin{align}\label{thmpralpha}
    \alpha(t) 
    \geq 
    e^{\frac{2\delta}{nm\tau}(t-t_*)}\left(\alpha(t_*) - \gamma\right) + \gamma,
\end{align}
where $\gamma$ is defined in \cref{eq:initial_feature_condition}. By \cref{eq:initial_feature_condition}, $\alpha(t_*) - \gamma > 0$ and thus $\alpha(t)$ increases exponentially.

Next, by orthogonality in \cref{thmpr1},
\[\|z(t)\|^2 = \alpha(t)^2 + \|w(t)\|^2 + nm\,c(t)^2.\]
By \Cref{prop:local_ntk_window}, we have $\|z(t)\| \le \sqrt{\varepsilon}$ on $[t_*,t_*+T_\varepsilon]$. Hence $\|w(t)\|^2 \le \varepsilon - \alpha(t)^2$.
Therefore,
\[\Phi(t)=\alpha(t)-\|w(t)\|
\ge
\alpha(t)-\sqrt{\varepsilon-\alpha(t)^2}.\]
It follows that $\Phi(t)>0$ whenever $\alpha(t) > \sqrt{\varepsilon/2}$. Using \cref{thmpralpha}, this condition is satisfied for $t \geq T_{\mathrm{sep}}$ where 
\[T_{\mathrm{sep}}
=
t_*+
\frac{nm\tau}{2\delta}
\log\!\left(
\frac{\sqrt{\varepsilon/2}-\gamma}{\alpha(t_*)-\gamma}
\right).\]
Finally, to ensure $T_{\mathrm{sep}}\le t_*+T_\varepsilon$, it suffices that
\[\frac{nm\tau}{2\delta}
\log\!\left(
\frac{\sqrt{\varepsilon/2}-\gamma}{\alpha(t_*)-\gamma}
\right)
\le
T_\varepsilon,\]
or equivalently,
\[\frac{\sqrt{\varepsilon/2}-\gamma}{\alpha(t_*)-\gamma}
\le
\exp\!\left(\frac{2\delta}{nm\tau}T_\varepsilon\right).\]
Under this condition, we obtain $T_{\mathrm{sep}}\in[t_*,t_*+T_\varepsilon]$, and for all $t\in[T_{\mathrm{sep}},\,t_*+T_\varepsilon]$, $\Phi(t)>0.$ This completes the proof.
\end{proof}

\Cref{thm:lin_sep} guarantees weak separation ($\Phi(t) > 0$), meaning the inter-cluster signal $\alpha(t)$ exceeds the overall dispersion $\|w(t)\|$. However, this only separates the cluster bulks; extreme outliers may still overlap. The following corollary connects this global metric to strict point-by-point classification, showing exactly how large $\Phi(t)$ must be to guarantee strict pairwise separation between all individual particles.

\begin{corollary}[Strict Pairwise Separation]\label{cor:pairwise_sep}
Under the conditions of \Cref{thm:lin_sep}, suppose the dataset has two clusters, $\mathcal{C}_1$ and $\mathcal{C}_2$. Let $z_k(t)$ be the projection of the $k$-th particle's feature along the contrast axis. For any $t \in [T_{\mathrm{sep}},\,t_*+T_\varepsilon]$, if the weak separation margin $\Phi(t)$ is large enough such that
\begin{equation}\label{eq:phi-condition}
    \Phi(t) > \left( \sqrt{\frac{nm}{2}} - 1 \right) \|w(t)\| , 
\end{equation}
then the classes are strictly separated. That is, for any particle $i \in \mathcal{C}_1$ and $j \in \mathcal{C}_2$, we have $z_i(t) > z_j(t)$.
\end{corollary}

\begin{proof}
Let $v$ be the contrast vector where $v_k = 1$ for $\mathcal{C}_1$ and $v_k = -1$ for $\mathcal{C}_2$. Its length is $\|v\| = \sqrt{nm}$, making the normalized direction $v_c = \frac{v}{\sqrt{nm}}$.

We can break down each particle's feature $z_k(t)$ into a shared global shift $c(t)$, an inter-cluster signal, and an individual noise term $w_k(t)$
\[ z_k(t) = c(t) + \alpha(t)(v_c)_k + w_k(t) . \]
For a particle $i \in \mathcal{C}_1$ ($v_i = 1$) and a particle $j \in \mathcal{C}_2$ ($v_j = -1$), their features are
\begin{align*}
    z_i(t) &= c(t) + \frac{\alpha(t)}{\sqrt{nm}} + w_i(t), \quad
    z_j(t) = c(t) - \frac{\alpha(t)}{\sqrt{nm}} + w_j(t).
\end{align*}
The difference between them is
\[ z_i(t) - z_j(t) = \frac{2\alpha(t)}{\sqrt{nm}} + (w_i(t) - w_j(t)) . \]
By the Cauchy-Schwarz inequality, the maximum difference between any two noise components is bounded by $|w_i(t) - w_j(t)| \leq \sqrt{2}\|w(t)\|$. This gives
\[ z_i(t) - z_j(t) \geq \frac{2\alpha(t)}{\sqrt{nm}} - \sqrt{2}\|w(t)\|= 2\Phi(t) - 2\|w(t)\|\left(\sqrt{\frac{nm}{2}} - 1\right), \]
where the above quantity is bounded below by $0$ from the condition on $\Phi$ in \cref{eq:phi-condition}. This completes the proof.
\end{proof}

\subsection{Continuum Limit of the Latent Dynamics}

Having established the separation dynamics for a finite dataset, we now extend our analysis to the continuum limit where the number of data points tends to infinity while the network parameterization remains finite. In this regime, the empirical feature distribution transitions into a continuous mass density over the latent space $\mathbb{R}^d$, and discrete summations over individual data points become functional integrations.

To formalize the data generation purely as a functional domain, let $\mathcal{X}$ be a base sample space with a source distribution $\mu(x)$, partitioned into $N$ disjoint class domains $\{X_k\}_{k=1}^N$. We define the conditional feature density $\rho_x(z)$ as the normalized mass distribution of latent features generated strictly from the augmentations of a single base sample $x \in \mathcal{X}$ mapped through the neural network. 

The global latent feature density across the entire dataset is thus the marginal integration of these local densities over the source space
\begin{equation*}
    \rho_0(z) = \int_{\mathcal{X}} \rho_x(z) \,\mathrm{d}\mu(x).
\end{equation*}
Define the continuous density ratio $\omega(x | z) = \frac{\rho_x(z) \mu(x)}{\rho_0(z)}$, which represents the mass contribution of the data sample $x$ to the total density at $z$. Similarly, we define the class-conditional latent density for each domain $X_k$ as $\rho_{C_k}(z) = \frac{1}{\mu(X_k)} \int_{X_k} \rho_x(z) \,\mathrm{d}\mu(x)$.

In the continuum limit, the discrete empirical loss in \cref{eq:augmented_contrastive_loss} must transition to a functional over the density space. By replacing the discrete intra-instance and global summations with integrations over the conditional and total densities, the continuous loss is defined as
\begin{equation}\label{eq:population_loss}
    \mathcal{J}[\{\rho_x\}] = \int_{\mathcal{X}} \iint_{\mathbb{R}^d \times \mathbb{R}^d} \rho_x(z) \rho_x(z') \log \left( \frac{\int_{\mathbb{R}^d} \exp\left(-\frac{\|z-\tilde{z}\|^2}{2\tau}\right) \rho_0(\tilde{z}) \,\mathrm{d}\tilde{z}}{\exp\left(-\frac{\|z-z'\|^2}{2\tau}\right)} \right) \,\mathrm{d}z \,\mathrm{d}z' \,\mathrm{d}\mu(x).
\end{equation}
The empirical loss $\widetilde{\mathcal L}(z)$ is exactly recovered from $\mathcal{J}[\{\rho_x\}]$ as $n,m \to \infty$ by substituting the empirical source measure $\mu^{(n)}(x) = \frac{1}{n}\sum_i \delta(x - x_i)$ and the empirical conditional measure $\rho_{x_i}^{(m)}(z) = \frac{1}{m}\sum_a \delta(z - z_{i,a})$.

In the Lagrangian perspective, the feature flow map $Z_t \in L^2(\rho_0; \mathbb{R}^d)$ tracks the trajectory of a mass element starting at $z_0$. We define the continuous orthogonal projection operators $\mathcal{P}_{\mathrm{global}}$, $\mathcal{P}_{\mathrm{mean}}$, and $\mathcal{P}_{\mathrm{class}}$ as integral operators on the Hilbert space $L^2(\rho_0; \mathbb{R}^d)$
\begin{align*} 
    (\mathcal{P}_{\mathrm{global}} Z_t)(z_0) &= \int_{\mathbb{R}^d} Z_t(z') \rho_0(z') \,\mathrm{d}z', \\ 
    (\mathcal{P}_{\mathrm{mean}} Z_t)(z_0) &= \int_{\mathcal{X}} \left[ \int_{\mathbb{R}^d} Z_t(z') \rho_x(z') \,\mathrm{d}z' \right] \omega(x \mid z_0) \,\mathrm{d}\mu(x), \\ 
    (\mathcal{P}_{\mathrm{class}} Z_t)(z_0) &= \sum_{k=1}^N \left[ \int_{\mathbb{R}^d} Z_t(z') \rho_{C_k}(z') \,\mathrm{d}z' \right] \left( \frac{\rho_{C_k}(z_0) \mu(X_k)}{\rho_0(z_0)} \right).
\end{align*}

\begin{assumption}[Continuous Augmentation Overlap Graph]
\label{assum:continuous_augmentation_concentration}
Let $\mathcal{L}_{\mathrm{aug}}$ be the continuous graph Laplacian  operator (the continuum analogue to the discrete graph Laplacian defined in \Cref{sec:geometric_formulation}) acting on $L^2(\rho_0; \mathbb{R}^d)$. We assume the continuous augmentation domain satisfies
\begin{enumerate}
    \item \textbf{Global Connectivity ($\lambda_2$):} The spectrum of the Laplacian operator satisfies a positive gap $\lambda_2(\mathcal{L}_{\mathrm{aug}}) > 0$.
    \item \textbf{Local-Global Variance Bound ($\eta$):} For any mapping $Z$, the Dirichlet energy is bounded by the intra-instance variance: $\langle Z, \mathcal{L}_{\mathrm{aug}} Z \rangle_{L^2(\rho_0)} \leq \eta \|(I - \mathcal{P}_{\mathrm{mean}}) Z\|^2_{L^2(\rho_0)}$.
\end{enumerate}
From \Cref{prop:poincare_dispersion}, it follows that
    \[\| (\mathcal{P}_{\mathrm{mean}} - \mathcal{P}_{\mathrm{class}}) Z \|^2_{L^2(\rho_0)} \leq \frac{\eta}{\lambda_2} \|(I - \mathcal{P}_{\mathrm{mean}}) Z\|^2_{L^2(\rho_0)}.\]
\end{assumption}

With the geometry established in $L^2(\rho_0)$, the following proposition bridges the discrete system to its continuous counterpart, viewing the learning dynamics as a Wasserstein gradient flow of the population functional $\mathcal{J}$.

\begin{proposition}[Mean-Field Limit]\label{prop:mean_field_wgf}
Let $\rho^{(n,m)}_0$ denote the empirical density, and assume $\rho^{(n,m)}_0 \to \rho_0$ in the sense of distributions as $n,m \to \infty$. The continuous density $\rho_t$ satisfies the continuity equation
\begin{equation}\label{eq:eulerian_continuity}
    \partial_t \rho_t = \nabla_z \cdot \left( \rho_t \mathcal{K}_t \nabla_z \frac{\delta \mathcal{J}}{\delta \rho_t} \right),
\end{equation}
where $\frac{\delta \mathcal{J}}{\delta \rho_t}$ is the first variation of the population functional and $\mathcal{K}_t$ is the integral operator defined by the kernel $K_t(z, z') = \nabla_w Z_t(z) \nabla_w Z_t(z')^\top$. Furthermore, suppose the initial feature distribution is bounded such that $\|Z_0\|_{L^2(\rho_0)} \leq \varepsilon$ for some sufficiently small $\varepsilon > 0$. Then, there exists a time window $t \in [0, T]$ during which the features remain bounded by $\|Z_t\|_{L^2(\rho_0)} \leq \sqrt{\varepsilon}$, and the corresponding Lagrangian flow $Z_t(z_0)$ satisfies the linearized descent
\[\dot{Z}_t = \frac{1}{\tau} \mathcal{K}_t \mathcal{H} Z_t + \mathcal{R}_t,\]
where $\mathcal{H} = 2(\mathcal{P}_{\mathrm{class}} - \mathcal{P}_{\mathrm{global}})$ is the macroscopic projection operator, and the continuous residual $\mathcal{R}_t$ is explicitly bounded in the $L^2(\rho_0)$ norm by
\[\|\mathcal{R}_t\|_{L^2(\rho_0)} \leq \mathcal{O}\left( C_K \left[ \frac{\varepsilon^{1/2}}{\tau}\sqrt{\frac{\eta}{\lambda_2}} + \frac{\varepsilon^{3/2}}{\tau^2} \right] \right).\]
\end{proposition}

Building upon this purely spatial formulation, we present the continuum analogue to the main linear separability result from \Cref{thm:lin_sep}. By tracking the continuous flow map $Z_t(z_0) \in L^2(\rho_0)$, we preserve the latent class identities through the continuous density ratios rather than explicit labels. We construct an orthogonal decomposition of the flow map $Z_t$ into a macroscopic separation amplitude aligned with the target contrast direction, a zero-mean intra-cluster dispersion mapping, and a global uniform shift.

\begin{theorem}[Latent Space Weak Linear Separation via Wasserstein Flow]\label{thm:latent_wgf_separation}
Consider the Lagrangian feature flow map $Z_t \in L^2(\rho_0; \mathbb{R}^d)$ and its corresponding linearization over the local time window $t \in [t_*,\, t_* + T_\varepsilon]$, as established in Proposition \ref{prop:mean_field_wgf}.
Let $v_c \in L^2(\rho_0)$ be the normalized contrast direction ($\|v_c\|_{L^2(\rho_0)} = 1$). Define the continuous spectral alignment 
\[\delta := \min_{t\in[t_*,t_*+T_\varepsilon]} \langle v_c, \mathcal{K}_t v_c \rangle_{L^2(\rho_0)}.\]
Assume $\delta > 0$ and that the flow satisfies the initialization condition
\begin{equation}\label{eq:continuous_initial_condition}
\gamma := \frac{C_K C_R}{2\delta} \left( \sqrt{\frac{\eta}{\lambda_2}} \varepsilon^{1/2} + \frac{\varepsilon^{3/2}}{\tau} \right) < \min\left\{\langle v_c, Z_{t_*}\rangle_{L^2(\rho_0)}, \sqrt{\frac{\varepsilon}{2}} \right\} .\
\end{equation}
Define the weak separation margin
\[\Phi(t) = \alpha(t) - \|W_t\|_{L^2(\rho_0)},\]
where $\alpha(t) = \langle v_c, Z_t \rangle_{L^2(\rho_0)}$ is the macroscopic separation amplitude and $W_t = (I - \mathcal{P}_{\mathrm{class}}) Z_t$ captures the intra-cluster dispersion.

Then there exists a time $T_{\mathrm{sep}} \in [t_*,\,t_*+T_\varepsilon]$ such that the classes are weakly linearly separated, i.e., $\Phi(t)>0$ for all $t\in[T_{\mathrm{sep}},\,t_*+T_\varepsilon]$.
\end{theorem}

\begin{proof}
    The proof structurally mirrors that of \Cref{thm:lin_sep}, adapted for the continuous inner products over $\rho_0$. The complete details are deferred to \Cref{app:continuum_proof}.
\end{proof}

The significance of \Cref{thm:latent_wgf_separation} lies in establishing that the geometric separation mechanism is a fundamental, macroscopic property of contrastive learning dynamics. The continuum analogue proves that the onset of clustering is entirely robust to the infinite-data limit. Regardless of whether we track individual data points or the macroscopic evolution of a purely spatial probability measure, the parameterized optimization dynamics inevitably drive the features toward clean linear separability the moment the critical spectral alignment threshold ($\delta > 0$) is crossed.

\section{Numerical Experiments}\label{sec:num}

In this section, we present a collection of numerical experiments designed to illustrate and validate the theoretical mechanisms developed in the previous sections. These experiments serve two complementary purposes. First, they verify the key assumptions underlying our analysis. Second, they demonstrate that the predicted phenomena arise robustly across a wide range of practical settings and architectures.

Our theory predicts that the early training regime depends critically on the initialization scale and the contrastive temperature. In the first subsection, we conduct controlled experiments to check how the ratio between the initialization scale $\varepsilon$ and the temperature $\tau$ affects the final learned representations. These experiments reveal a clear phase transition. When the initialization is sufficiently small relative to the temperature, the dynamics follow the predicted kernel-driven evolution. Conversely, larger initializations cause nonlinear effects to dominate early and prevent the spectral mechanism from emerging.

In the second subsection, we present simple experiments showing how contrastive learning naturally leads to linear separation. We track the evolution of the kernel matrix during training. We demonstrate that a distinct block structure emerges within the kernel matrix. This structural change explicitly aligns with the formation of tightly clustered representations and the emergence of linear separability among different classes.

Finally, in the third subsection, we provide a direct empirical verification of our main theoretical result, \cref{thm:lin_sep}. We investigate the robustness of the spectral trigger mechanism across different data modalities and neural network architectures. We test this on image data using CIFAR-10 with convolutional architectures, continuous physical fields using PDE-generated data with fully connected models, and natural language using IMDB text data processed by Transformer encoders. Across all these diverse settings, the experiments consistently confirm the theoretical prediction: the development of a kernel spectral gap precedes and strictly drives the emergence of linear separability.

\subsection{Verification of the parameter condition for linear separability.}\label{ssec:veri-lin}
The parameter condition established in Proposition \ref{prop:local_ntk_window} defines the operational regime where the linearized gradient flow remains a valid description of the feature dynamics. A central pillar of this analysis is the ratio between the initial intra-cluster feature distance, $\varepsilon$, and the temperature parameter, $\tau$. From a practical standpoint, these two parameters are among the easiest to set and tune during model training. Analytically, their ratio is fundamental because it strictly dictates whether the linearization required for our approximation holds. The following results demonstrate that meaningful separation dynamics only occur when this ratio satisfies our established conditions, validating its critical importance in both theory and practice.

The mechanism underlying this ratio dictates the duration of the stability window $T_\varepsilon$. When the ratio $\varepsilon/\tau$ is sufficiently small, the linearization is highly stable, granting the contrastive dynamics enough time to amplify the separation mode. Conversely, when $\varepsilon$ is too large, the intra-cluster distance becomes too wide to effectively feel the scaling effect from the temperature. Specifically, the local feature updates fail to properly interact with the terms governed by the squared distance over temperature (e.g., proportional to $\|z_i - z_j\|^2/\tau$). Because the local neighborhood does not properly capture this temperature effect, the feature dynamics stall entirely without producing any macroscopic separation.

To illustrate the sensitivity of the training outcome to this condition, Figure \ref{fig:nn-linear-sep-on-off} compares the evolution of the same neural network architecture across distinct parameter regimes, all trained using the same learning rate. In the favorable regime where the ratio $\varepsilon/\tau$ is small, characterized by $\varepsilon=$ 0.02 and $\tau=$ 0.1, the system exhibits early linear separation. This confirms that a stable configuration, supported by a low initial dispersion and moderate temperature, efficiently guides the features into disjoint clusters, achieving linear separation in just 20 iterations.

As the initial dispersion grows, the dynamics transition into a delayed separation regime. For instance, with $\varepsilon=$ 1 and $\tau=$ 0.01, the network eventually achieves a linearly separable state, but the transition is significantly postponed, occuring after 3000 iterations. Furthermore, unlike the clear, distinct clustering achieved in the small ratio regime, this delayed separation is not entirely clean, and noticeable mixing between the clusters is still observed. In the extreme case where the ratio is overly large, such as $\varepsilon=$ 1 and $\tau=$ 0.001, the dynamics fail to take effect. The large initial distance effectively nullifies the required temperature scaling, suppressing the growth of the contrastive mode. In this state, the features remain entangled, and absolutely nothing happens even after 10,000 iterations, demonstrating that linear separation completely fails when the initial feature scale is severely mismatched with the temperature parameter.
\begin{figure}[t]
    \centering

    \begin{minipage}{0.35\linewidth}
        \centering
        \includegraphics[width=\linewidth]{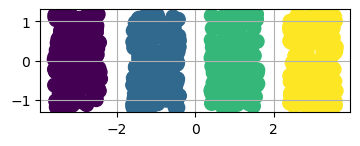}
        \caption*{Clustered input data}
    \end{minipage}

    \vspace{0.5em}

    \begin{minipage}{0.30\linewidth}
        \centering
        \includegraphics[width=\linewidth]{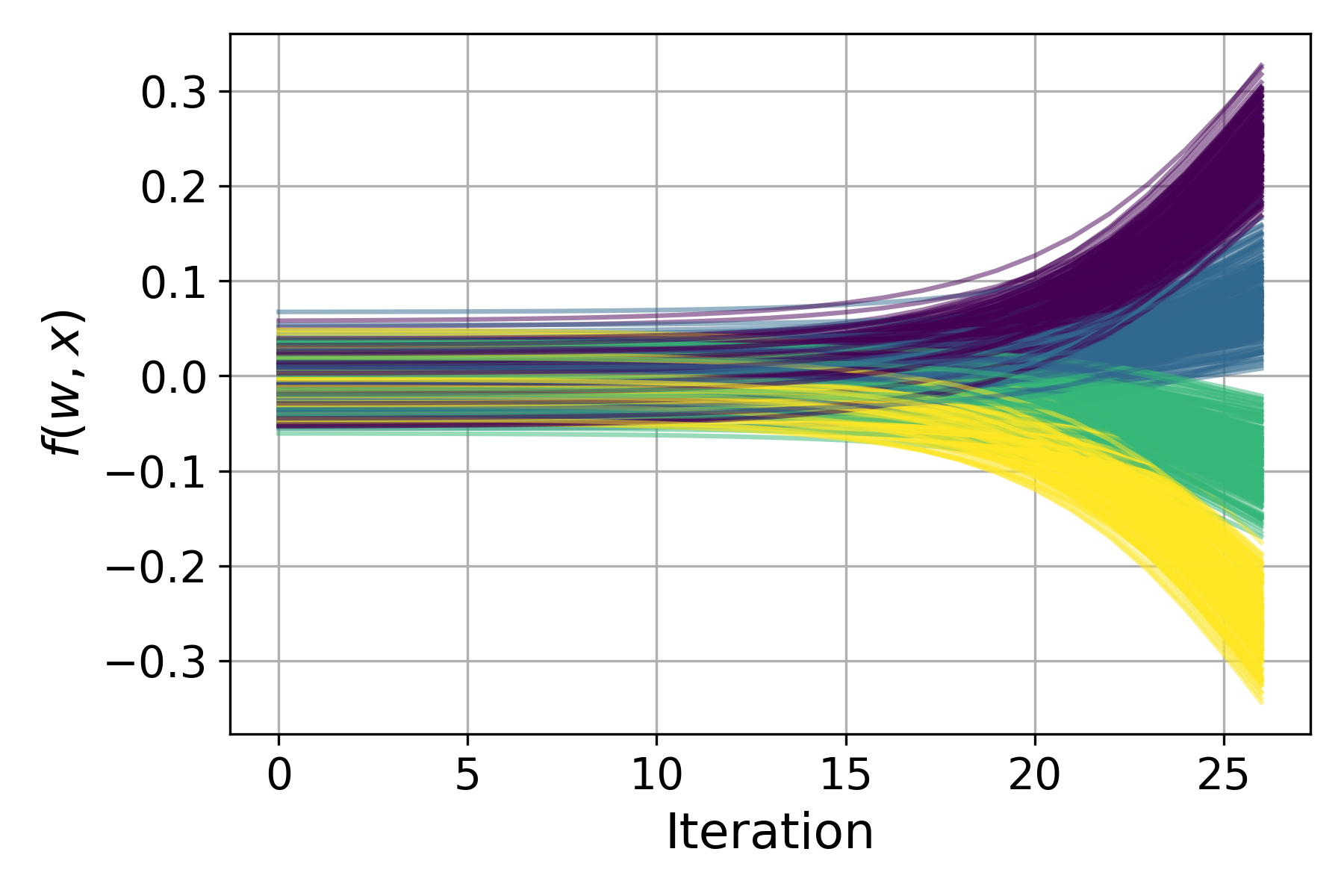}
        \caption*{Small ratio $\varepsilon/\tau$: early separation}
    \end{minipage}
    \hfill
    \begin{minipage}{0.30\linewidth}
        \centering
        \includegraphics[width=\linewidth]{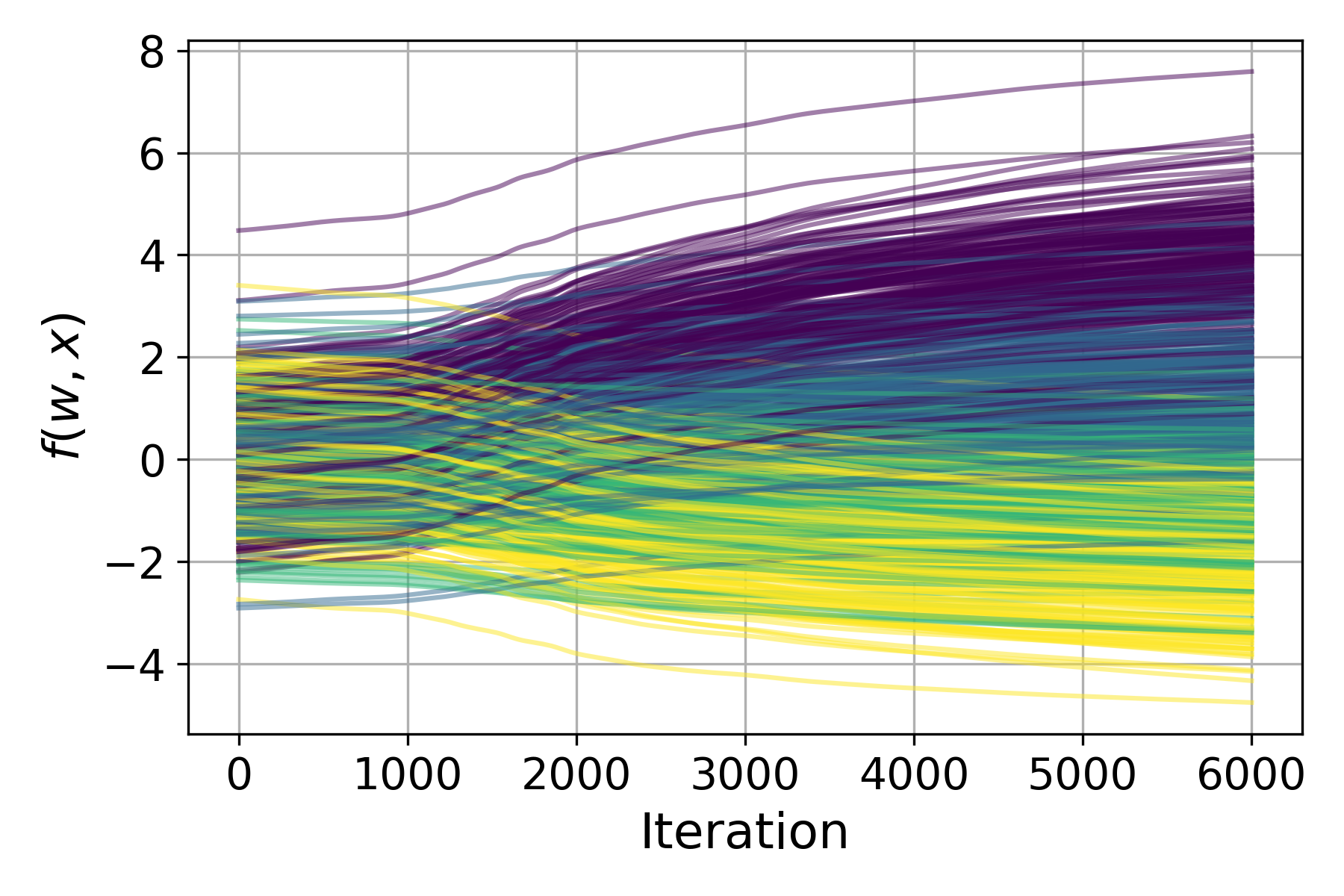}
        \caption*{Large $\varepsilon/\tau$: delayed separation}
    \end{minipage}
    \hfill
    \begin{minipage}{0.30\linewidth}
        \centering
        \includegraphics[width=\linewidth]{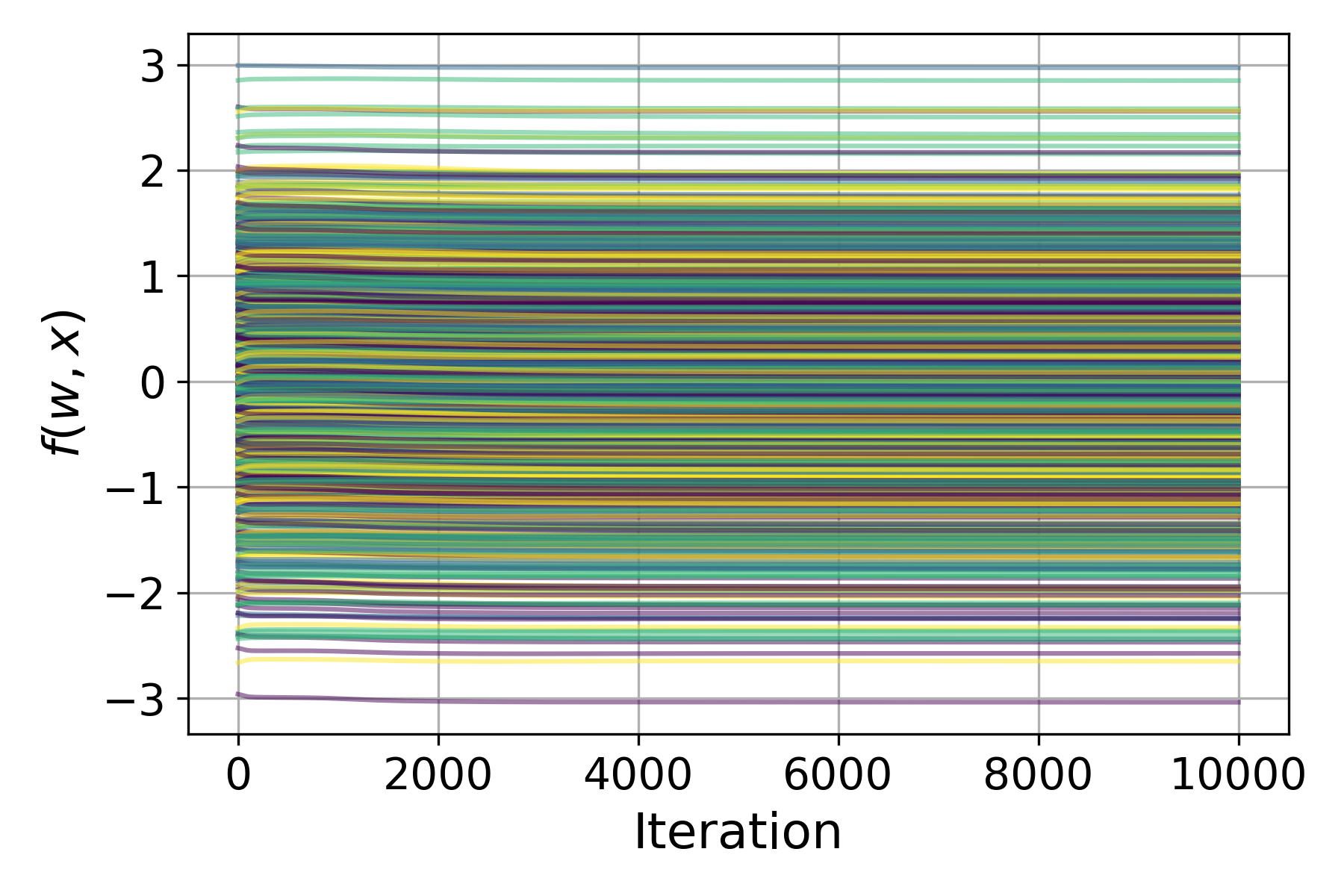}
        \caption*{Larger $\varepsilon/\tau$: no separation in practice}
    \end{minipage}

    \caption{This experiment verifies the parameter condition in \Cref{prop:contrastive_taylor}. The training outcome is governed by the ratio between the initial intra-feature distance $\varepsilon$ and the temperature $\tau$ in the SimCLR loss. When $\varepsilon^{3}/\tau$ is small, the dynamics quickly produce linearly separable features after a short time. When $\varepsilon^{3}/\tau$ is large, separation is delayed and may not occur even after $10{,}000$ iterations.}

    \label{fig:nn-linear-sep-on-off}
\end{figure}

\newlength{\colw}
\newlength{\dataheight}
\setlength{\colw}{0.31\linewidth}      
\setlength{\dataheight}{2.2cm}         

\begin{figure}[t]
\centering

\begin{minipage}[t]{\colw}
\centering
\includegraphics[height=\dataheight,keepaspectratio]{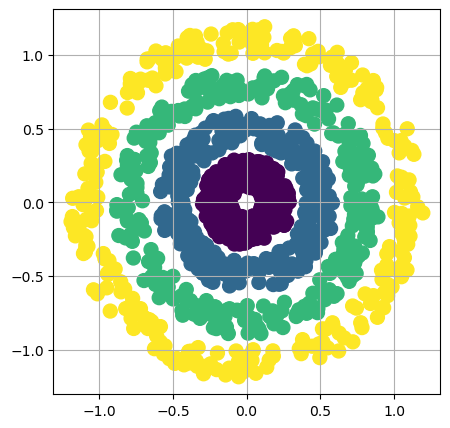}
\par\small Donuts (data)
\end{minipage}\hfill
\begin{minipage}[t]{\colw}
\centering
\includegraphics[height=\dataheight,keepaspectratio]{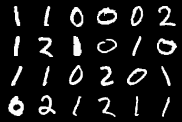}
\par\small MNIST (data)
\end{minipage}\hfill
\begin{minipage}[t]{\colw}
\centering
\includegraphics[height=\dataheight,keepaspectratio]{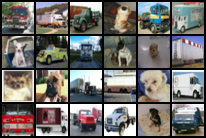}
\par\small CIFAR-10 (data)
\end{minipage}

\vspace{0.6em}

\begin{minipage}[t]{\colw}
\centering
\includegraphics[width=\linewidth]{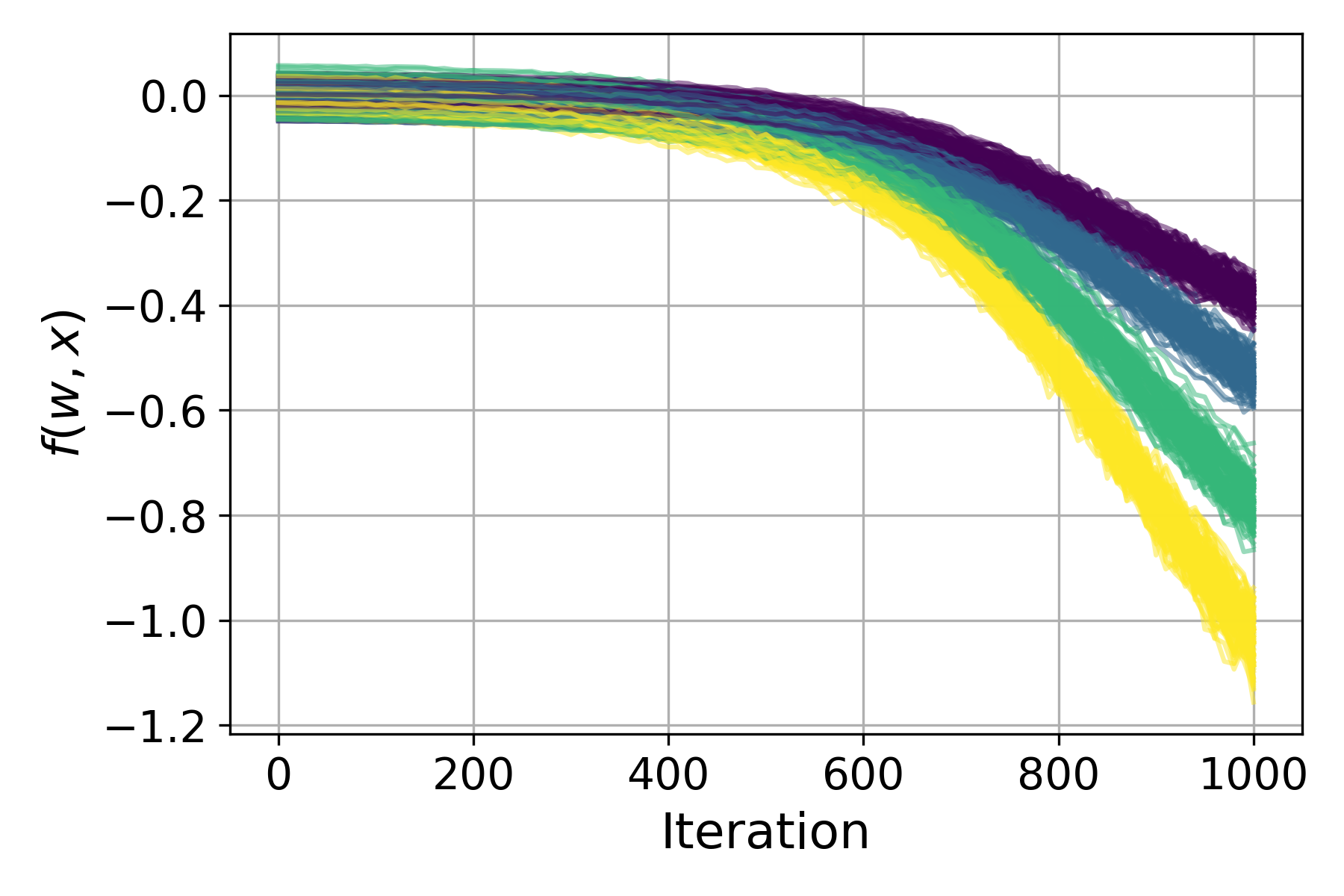}
\par\small 1D embedding (Donuts)
\end{minipage}\hfill
\begin{minipage}[t]{\colw}
\centering
\includegraphics[width=\linewidth]{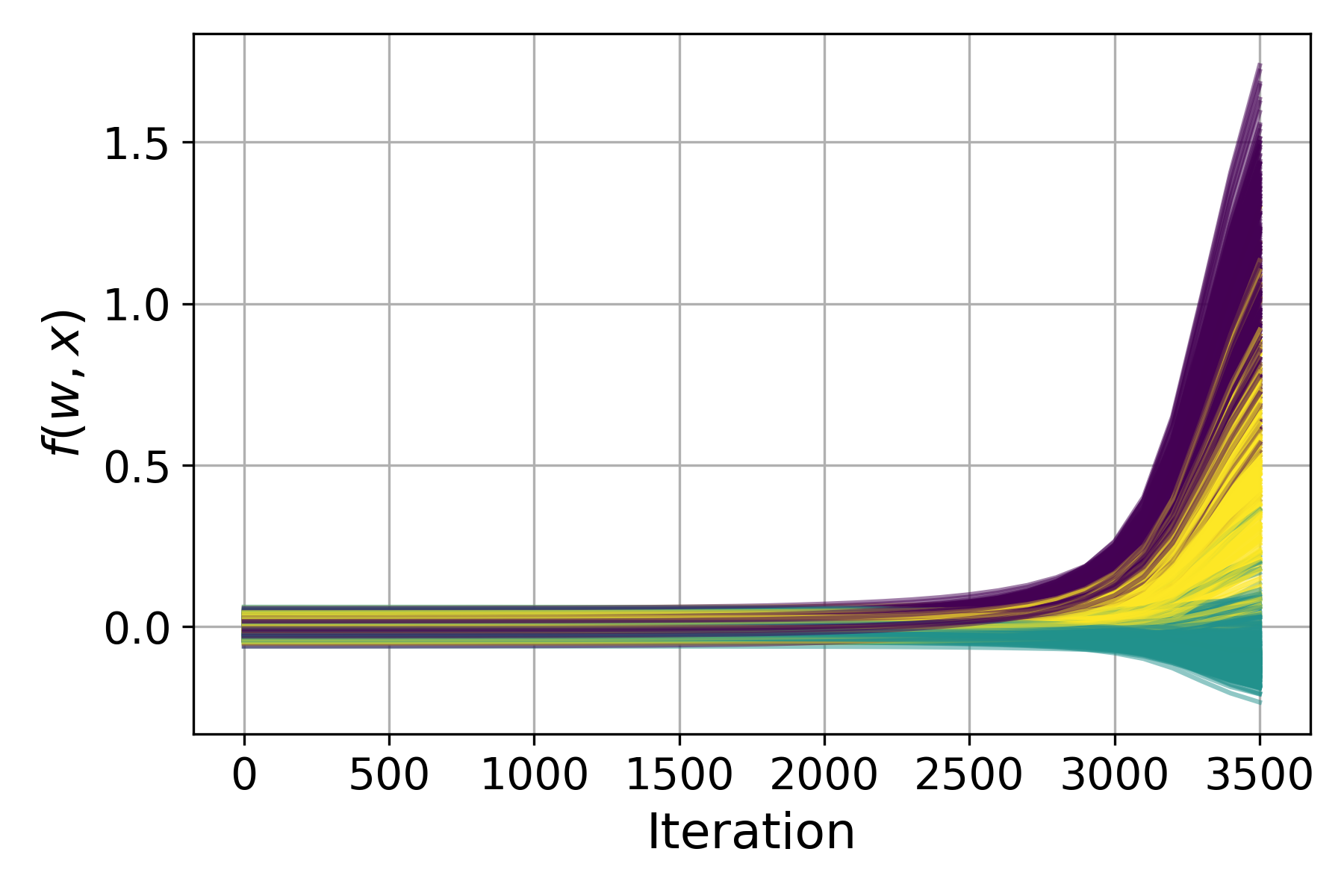}
\par\small 1D embedding (MNIST)
\end{minipage}\hfill
\begin{minipage}[t]{\colw}
\centering
\includegraphics[width=\linewidth]{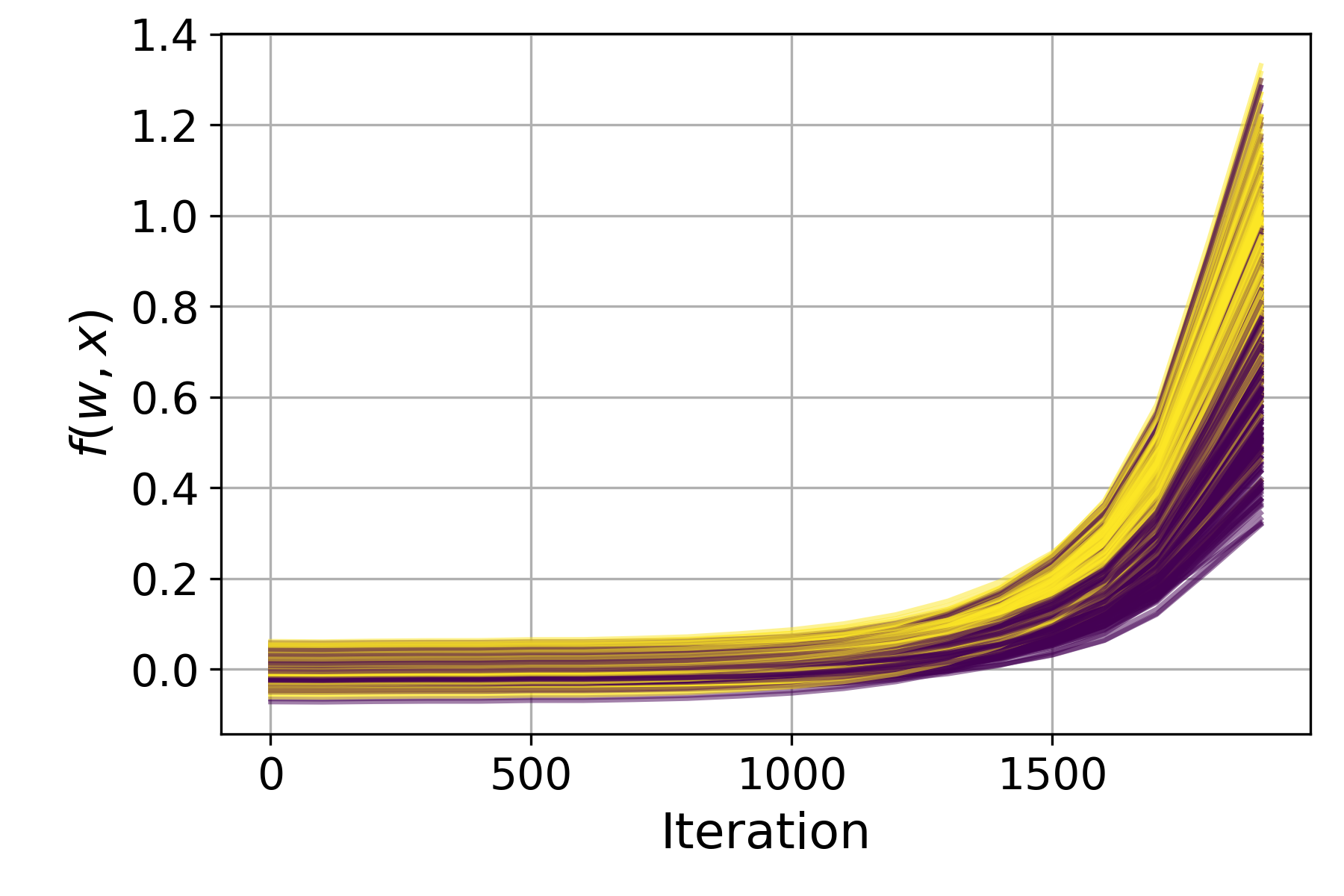}
\par\small 1D embedding (CIFAR-10)
\end{minipage}

\caption{
Top row: visualizations of nonlinear input datasets that are not linearly separable. Bottom row: corresponding one-dimensional embedding trajectories over iterations under SimCLR gradient flow, showing increasing cluster structure and eventual linear separability.
}
\label{fig:nn-linear-sep}
\end{figure}

\begin{figure}[ht!]
\centering

\begin{minipage}[t]{0.24\linewidth}
\centering
\includegraphics[width=\linewidth]{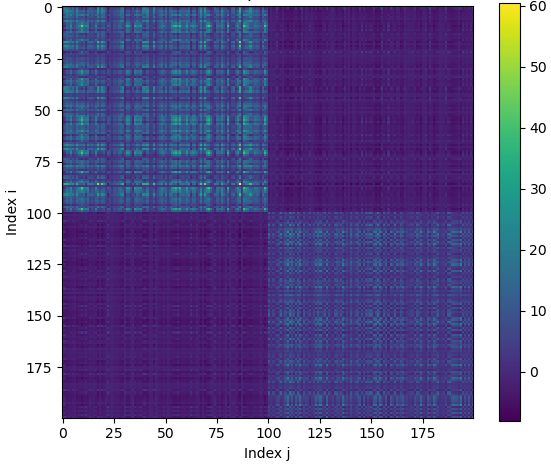}
\par\small
Linear data\\Iteration 0
\end{minipage}\hfill
\begin{minipage}[t]{0.24\linewidth}
\centering
\includegraphics[width=\linewidth]{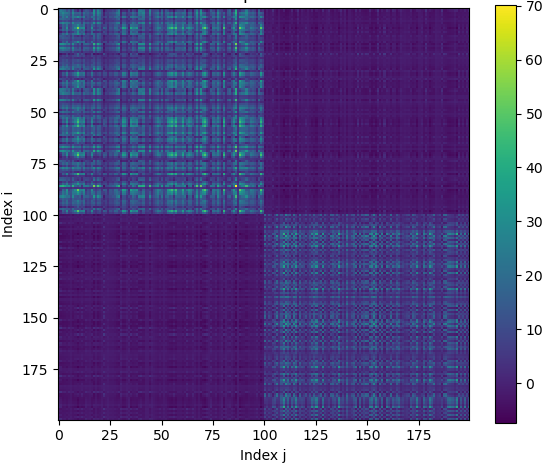}
\par\small
Linear data\\Iteration 5000
\end{minipage}\hfill
\begin{minipage}[t]{0.24\linewidth}
\centering
\includegraphics[width=\linewidth]{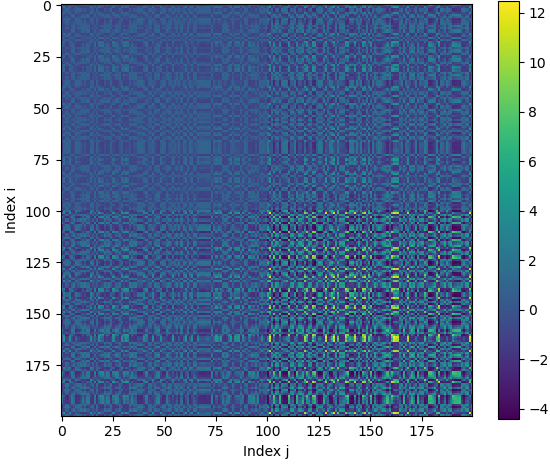}
\par\small
Nonlinear data\\Iteration 0
\end{minipage}\hfill
\begin{minipage}[t]{0.24\linewidth}
\centering
\includegraphics[width=\linewidth]{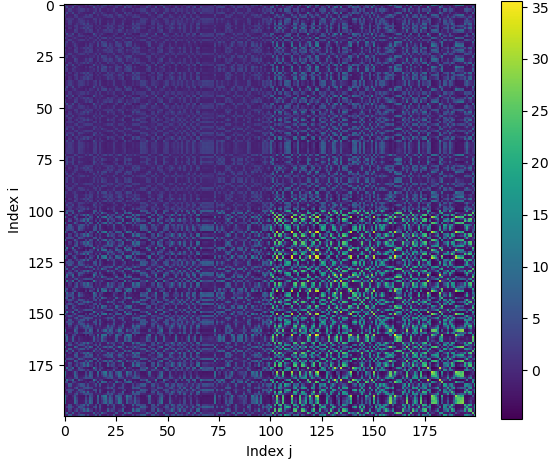}
\par\small
Nonlinear data\\Iteration 5000
\end{minipage}

\caption{
Evolution of the kernel matrix $K$ in \cref{eq:kernel-K} for two-cluster datasets.
Linear data: clear $2\times2$ block structure at $t=0$ and mild refinement by 5000 iterations.
Nonlinear donut: no block structure at $t=0$ but pronounced blocks after 5000 iterations.
}
\label{fig:kernel-evolution}
\end{figure}

\subsection{The Role of Augmentation Design}
\label{sec:augmentation_experiments}

To empirically validate the necessity of the geometric and distributional assumptions placed on the augmentation mapping $T(x)$, we construct a synthetic multi-class experiment. We demonstrate how violating the continuous augmentation overlap graph assumptions in \Cref{assum:continuous_augmentation_concentration} impacts the structural connectivity of the latent space and the resulting linear separability.

We consider a 3-class classification problem where the data is distributed across three concentric rings (radii $r \in \{0.4, 1.0, 1.6\}$). Each class consists of $50$ points drawn uniformly over its respective radial domain, with a slight structural noise (width factor of $0.01$). Our network $f_\theta$ is a 4-hidden-layer Multi-Layer Perceptron (MLP) with a hidden dimension of $50$ and LeakyReLU activations (negative slope $\alpha = 0.2$). The model is trained using Stochastic Gradient Descent (SGD) with a learning rate of $0.01$ for $5,000$ iterations. We optimize a contrastive loss in \cref{eq:cost-dis-new}, where similarity is defined via $e^{-|x-y|^2/\tau}$, with a temperature parameter $\tau = 0.1$.

To isolate the effect of the augmentation strategy, we compare four distinct augmentation mechanisms (visualized in Figure \ref{fig:aug_visuals})
\begin{itemize}
    \item \textbf{Full Rotation (Ideal):} Points are rotated by a uniformly sampled angle $\theta \sim [-\pi, \pi]$. This forms a completely connected, closed-loop augmentation graph.
    \item \textbf{Small Rotation:} Points are rotated uniformly within a narrow bounded range $\theta \sim [-\pi/4, \pi/4]$.
    \item \textbf{Gaussian Noise:} Points are corrupted by additive Gaussian noise $\epsilon \sim \mathcal{N}(0, 0.2^2 I)$.
    \item \textbf{Half-Circle Rotation:} Points are rotated uniformly, but strictly within their respective spatial half-circle (e.g., bounded within $y > 0$ or $y < 0$).
\end{itemize}
To explicitly connect these strategies to our theoretical framework, recall \Cref{assum:augmentation_graph}, which bounds the intra-class augmentation variance via the continuous Poincaré inequality. The ratio between the local feature smoothness $\eta$ and the spectral gap of the augmentation graph $\lambda_2$ acts as a geometric penalty
\[\| (\mathcal{P}_{\mathrm{mean}} - \mathcal{P}_{\mathrm{class}}) z \| \leq \sqrt{\frac{\eta}{\lambda_2}} \|(I - \mathcal{P}_{\mathrm{mean}}) z\|.\]

A critical phenomenon in contrastive learning is that the network will naturally gravitate toward learning the representational structure (the definition of a "class") that provides the path of least geometric resistance. The magnitude of the penalty ratio $\eta / \lambda_2$ determines whether the continuous Wasserstein flow can successfully compress the latent representations into the target classes:

\begin{itemize}
    \item \textbf{Full Rotation:} Because the augmentation domain covers all possible orientations, it forms a perfectly connected graph for the entire class with no boundaries. This yields a large spectral gap $\lambda_2$. Consequently, the geometric penalty ratio $\eta / \lambda_2$ is minimal. The network easily overcomes this small resistance, rapidly compressing the intra-cluster dispersion and separating the dataset into the 3 true underlying classes ($<1,000$ iterations).
    
    \item \textbf{Small Rotation ($[-\pi/4, \pi/4]$):} The restricted augmentation neighborhood means the overlap between distinct base samples is heavily bottlenecked. The graph is technically connected along the ring, but stretched thin, resulting in a small spectral gap $\lambda_2$. This drives the penalty ratio $\eta / \lambda_2$. This geometric resistance suppresses the macroscopic separation amplitude $\alpha(t)$ early in training. However, because the graph is still fundamentally connected, the continuous flow eventually overcomes this penalty. The representations ultimately separate into the 3 classes, but the convergence is delayed ($\sim 2,500$ iterations).
    
    \item \textbf{Gaussian Noise:} It is crucial to differentiate the behavior here from the small rotation. Isotropic Gaussian noise scatters points radially in 2D space, which fundamentally breaks the 1D structural overlap between neighboring points along the ring. The probability of distinct samples' augmented views meaningfully overlapping is exponentially small, meaning $\lambda_2$ functionally approaches zero. If the penalty ratio $\eta / \lambda_2$ is too massive, the attractive forces in the contrastive loss are completely overwhelmed. Unable to pull the disconnected graph together to form the 3 target classes, the separation process fails entirely. The network minimizes its loss by shattering the representations into instance-level noise.
    
    \item \textbf{Half-Circle Rotation:} The bounded 180-degree rotation physically fails to bridge the gap between base samples on opposing sides of the ring. The augmentation graph physically disconnects into isolated subgraphs. Across the entire ring, the global spectral gap is exactly zero ($\lambda_2 = 0$), making the penalty to form a single 3-class structure strictly infinite. However, within each isolated half-circle subgraph, local connectivity yields a $\lambda_2 > 0$. The network takes the path of least resistance by satisfying the continuous bound locally, fundamentally learning $6$ distinct clusters (splitting each of the 3 rings in half) rather than 3.
\end{itemize}

\begin{figure}[htbp]
    \centering
    \includegraphics[width=\textwidth]{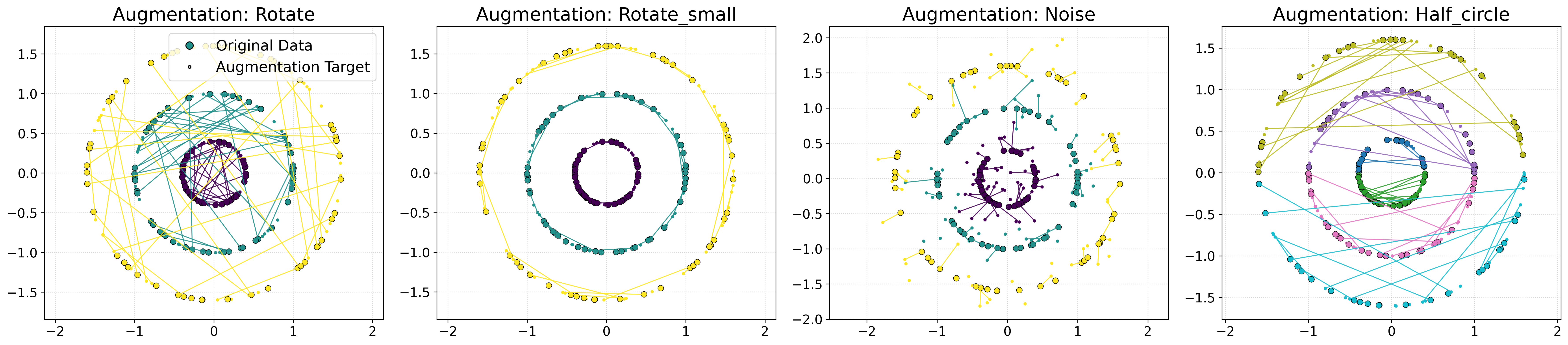}
    \caption{Spatial visualization of the augmentation strategies. Original data points are plotted as colored circles, while the augmented targets are shown in faint grey. Only the full rotation provides uniform global coverage over the underlying class structures.}
    \label{fig:aug_visuals}
\end{figure}

The choice of augmentation strictly governs the evolution of the inter-class and intra-class feature distances. Figure \ref{fig:margin_comp} tracks the continuous separation margin $\Phi$ (defined as the macroscopic separation amplitude minus the intra-cluster dispersion) over the training process. 

Under the ideal full rotation, the margin grows robustly and immediately due to the low $\eta/\lambda_2$ penalty. Under the small rotation, the model ultimately achieves the same margin, but its trajectory is heavily delayed by the high geometric resistance. Finally, when the augmentation severs the underlying graph entirely (Gaussian Noise), the massive penalty prevents the margin from ever growing, causing structural separation to fail.

\begin{figure}[htbp]
    \centering
    \includegraphics[width=0.5\textwidth]{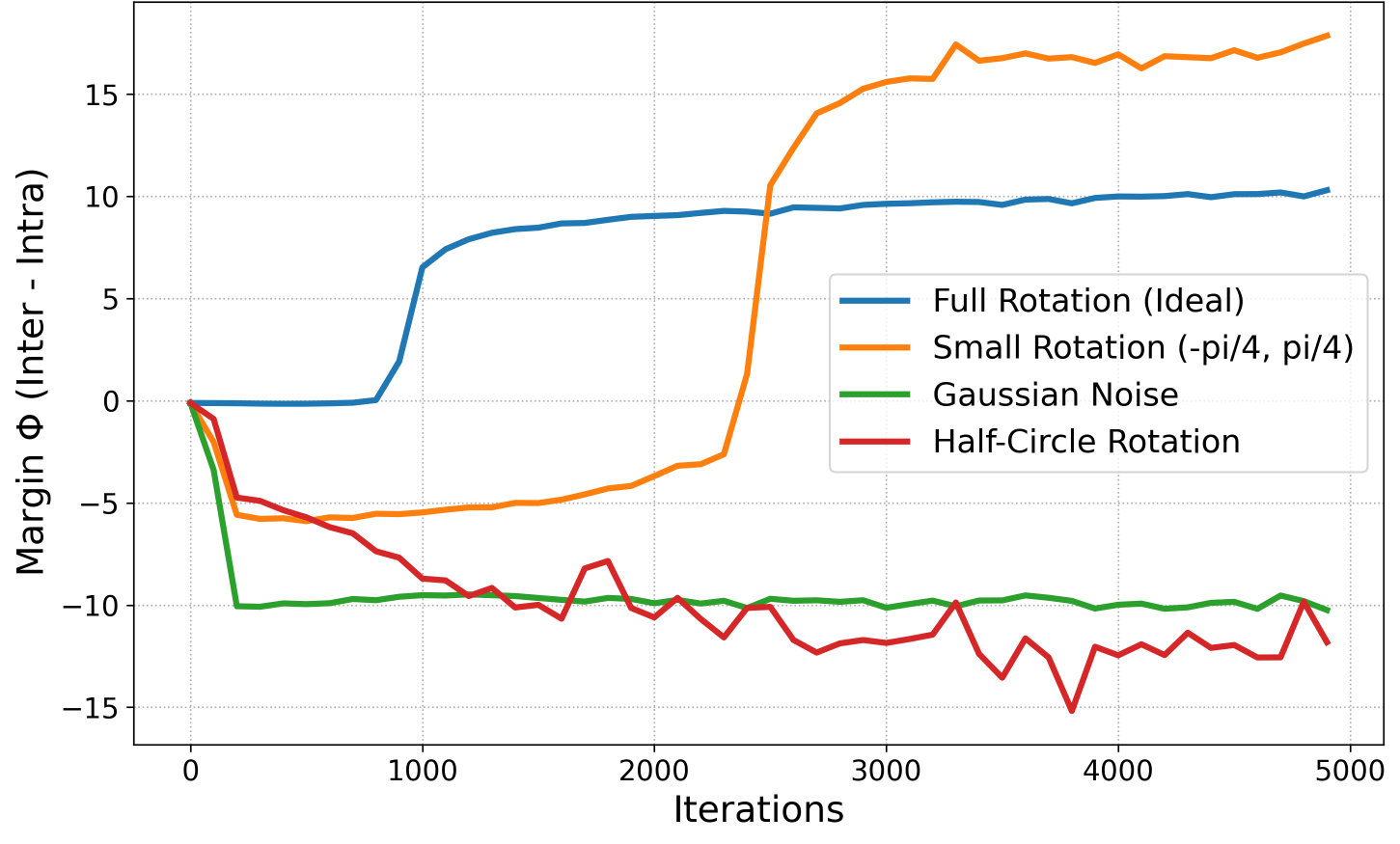}
    \caption{Evolution of the linear separability margin $\Phi$ across the augmentation strategies. Full Rotation maximizes separation rapidly, while Small Rotation exhibits a mathematically predictable delay due to a large $\eta/\lambda_2$ ratio. Severing the graph entirely (Gaussian Noise) prevents any inter-class structural separation.}
    \label{fig:margin_comp}
\end{figure}

These macroscopic margin effects are directly driven by the microscopic particle dynamics of the 1D function outputs, shown in Figure \ref{fig:particle_evol}. For the full rotation, the outputs exhibit a clear and rapid divergence into three distinct bands. For the small rotation, these three bands also form, but at a significantly later stage due to the bottlenecked flow. For the Gaussian noise, the lack of graph connectivity induces chaotic, instance-level trajectories with no coherent clustering. Finally, the half-circle rotation forces the embeddings to diverge into six distinct bands, visually confirming that when $\lambda_2 = 0$ globally, the disconnected augmentation subgraphs dictate the learned invariant topology.

\begin{figure}[htbp]
    \centering
    \includegraphics[width=\textwidth]{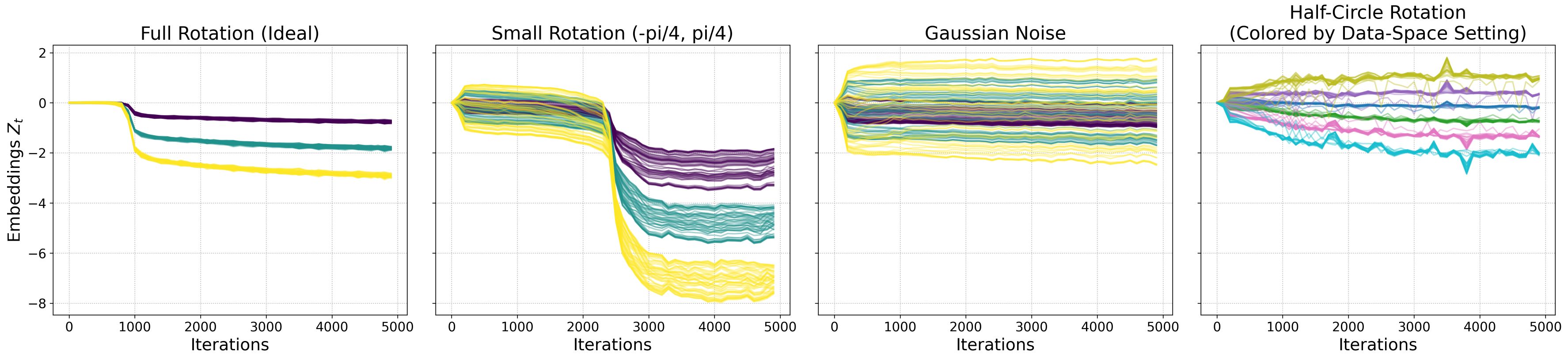}
    \caption{Particle evolution of embeddings over $5,000$ iterations. The full rotation clusters the three classes rapidly. The bottlenecked graph of the small rotation severely delays this clustering. Gaussian noise destroys the structural overlap entirely, causing instance-level shattering, while the half-circle rotation splits the dataset into six disconnected representations.}
    \label{fig:particle_evol}
\end{figure}

\subsection{Emergence of linear separability from complex datasets.}
\Cref{fig:nn-linear-sep} shows three experiments where features become linearly separable under gradient flow of the SimCLR loss, starting from datasets that are nonlinear and more complex than the well-clustered case in \Cref{fig:nn-linear-sep-on-off}. The networks are: a one-hidden-layer MLP with 100 neurons, a three-hidden-layer MLP with 100 neurons per layer, and a small CNN with two convolutional layers followed by one fully connected layer. The datasets are an artificial donut, MNIST (three classes), and CIFAR-10 (two classes). Augmentations are as follows: for the donut, a random rotation in which $T(x)$ is sampled uniformly on the circle centered at the origin with radius $\|x\|$; for MNIST, Gaussian blur, random rotation, and resizing; for CIFAR-10, Gaussian blur, random rotation, resizing, and color jittering.
In all runs, features are initialized uniformly in an $\varepsilon$-ball and then evolved by gradient flow. Each curve traces one particle and is colored by its cluster label. Across all settings, the learned features become increasingly linearly separable, indicating that the separation mechanism extends beyond the theory’s simplifying assumptions. In particular, condition (c) in \Cref{thm:lin_sep} may not hold at initialization for these complex datasets.

To connect these observations with the theorem, we examine the kernel $K$ in \cref{eq:kernel-K}. Condition (c) requires $K$ to be approximately block structured. Empirically, we observe that if this structure is absent at initialization, training first reshapes $K$ toward it, which typically adds iterations.

We visualize this transition in \Cref{fig:kernel-evolution} using two clusters so that the target pattern is a $2\times 2$ block. Panels (a) and (b) show the linearly separable case from \Cref{fig:nn-linear-sep-on-off} with two clusters. A clear block structure is already visible at iteration 0 because the data are well clustered (see \Cref{ssec:veri-lin}), and it refines only modestly by iteration 5000. Panels (c) and (d) show the donut dataset, which is not well clustered: the initial kernel shows little separation, but by iteration 5000 a sharp block structure emerges. These visuals link the empirical separation in \Cref{fig:nn-linear-sep} to the hypothesis in \Cref{thm:lin_sep}: once training sculpts $K$ into the required block form, linear separability follows.

\subsection{Empirical Validation of Coercivity and Linear Separability}\label{sec:main-numeric}

To further empirically validate our main theoretical results from \Cref{thm:lin_sep}, we evaluate our method across four distinct modalities. Across all experiments, we focus on binary clustering tasks using non-linear datasets. It is crucial to emphasize that the inherent data distributions in these domains are highly complex. Consequently, embedding such intricate, non-linear structures into a 1D space to achieve linear separation is an exceptionally challenging task. Despite this inherent difficulty, our results demonstrate the remarkable power of contrastive learning in successfully discovering these 1D representations.

\Cref{fig:main_results} illustrates the training dynamics for each domain. For every dataset, we plot the evolution of the coercivity metric $\delta(t)$, the linear separability metric $\Phi$, and the corresponding 1D particle evolution. Consistently across all four settings, regardless of the underlying architecture or data modality, we observe that $\delta(t)$ sharply increases prior to the onset of linear separation (indicated by the subsequent rise in $\Phi$). This empirical observation directly corroborates our main theorem, demonstrating that the system consistently achieves strong coercivity before the clusters become linearly separable.

The four experimental domains, visualized in \Cref{fig:four_datasets}, are structured as follows
\begin{enumerate}[(a)]
    \item Synthetic Dataset (Donut): We constructed a 2D synthetic dataset consisting of two concentric donut-shaped clusters with different radii. The neural network utilizes fully connected layers. To preserve the topological structure of the clusters, data augmentation is strictly limited to rotational shifts (angle changes).
    \item CIFAR-10: We applied a convolutional neural network architecture to the CIFAR-10 dataset. Data augmentation incorporated standard spatial and visual transformations, specifically rotation, translation, resizing, and blurring.
    \item Partial Differential Equations (PDE): We generated a physics-based dataset using the heat equation. The two clusters represent different initial input frequencies: low-frequency Fourier modes (Label 0) and high-frequency Fourier modes (Label 1). These initial fields are then subjected to heat evolution smoothing. Augmentation for this domain consists of spatial translations combined with forward heat equation steps.
    \item Text (IMDB) \cite{maas2011learning}: We implemented a Transformer architecture to classify text data, clustering IMDB movie reviews into positive and negative sentiments.
\end{enumerate}

\begin{figure}[htbp]
    \centering
    \includegraphics[width=0.97\textwidth]{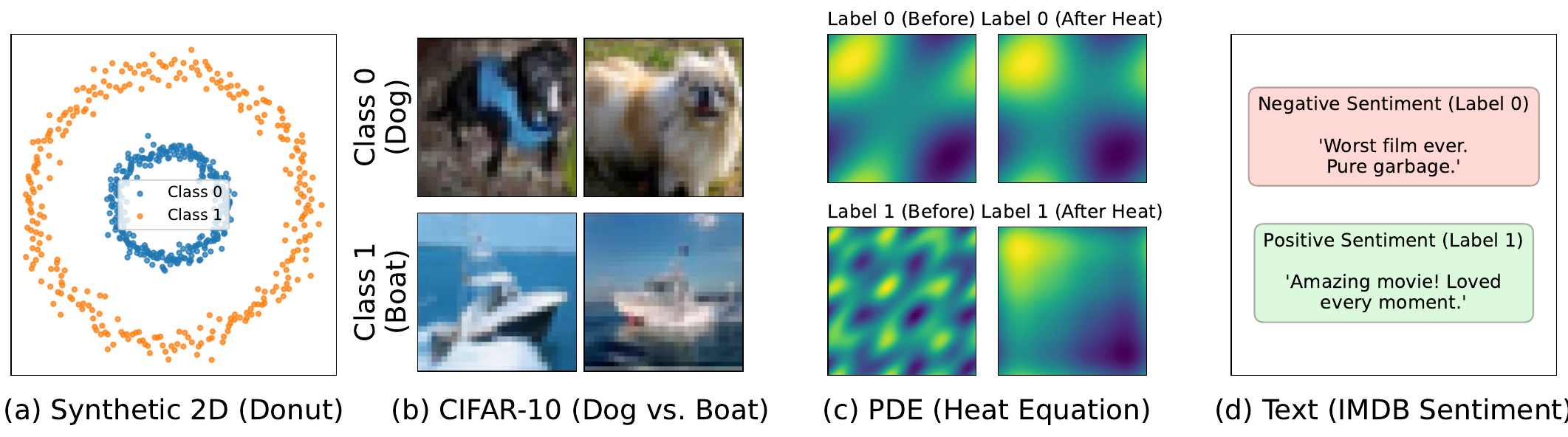}
    \caption{Visualizations of the four experimental datasets: (a) 2D synthetic concentric donuts, (b) CIFAR-10 samples for the dog and boat classes, (c) low- and high-frequency Fourier modes before and after forward heat evolution, and (d) IMDB text sentiment examples.}
    \label{fig:four_datasets}
\end{figure}

\begin{remark}[Connection to the Backward Heat Equation]
For the PDE domain in particular, the contrastive learning task involves distinguishing the original input frequencies from data that has been smoothed by forward heat evolution. Analytically, reversing the heat equation is a famously difficult inverse problem. While Theorem 11 in Section 2 of \cite{evans2022partial} establishes the backward uniqueness of the heat equation, ensuring that the initial states are theoretically distinct, practical recovery is highly unstable due to the severe loss of high-frequency information. Although the specific dataset we generated for this experiment is only moderately challenging, the underlying inverse problem remains inherently difficult. The fact that the network successfully separates these diffused states still highlights how the spectral alignment mechanism can extract and amplify the surviving signals despite this smoothing effect.
\end{remark}

Comprehensive details regarding network architectures, hyperparameters, data generation procedures (including the precise Fourier mode configurations for the PDE dataset), and augmentation strategies are provided in the Appendix.

\begin{figure}[htbp]
    \centering
    \begin{minipage}[b]{0.45\linewidth}
        \centering
        \includegraphics[width=\linewidth, height=10cm]{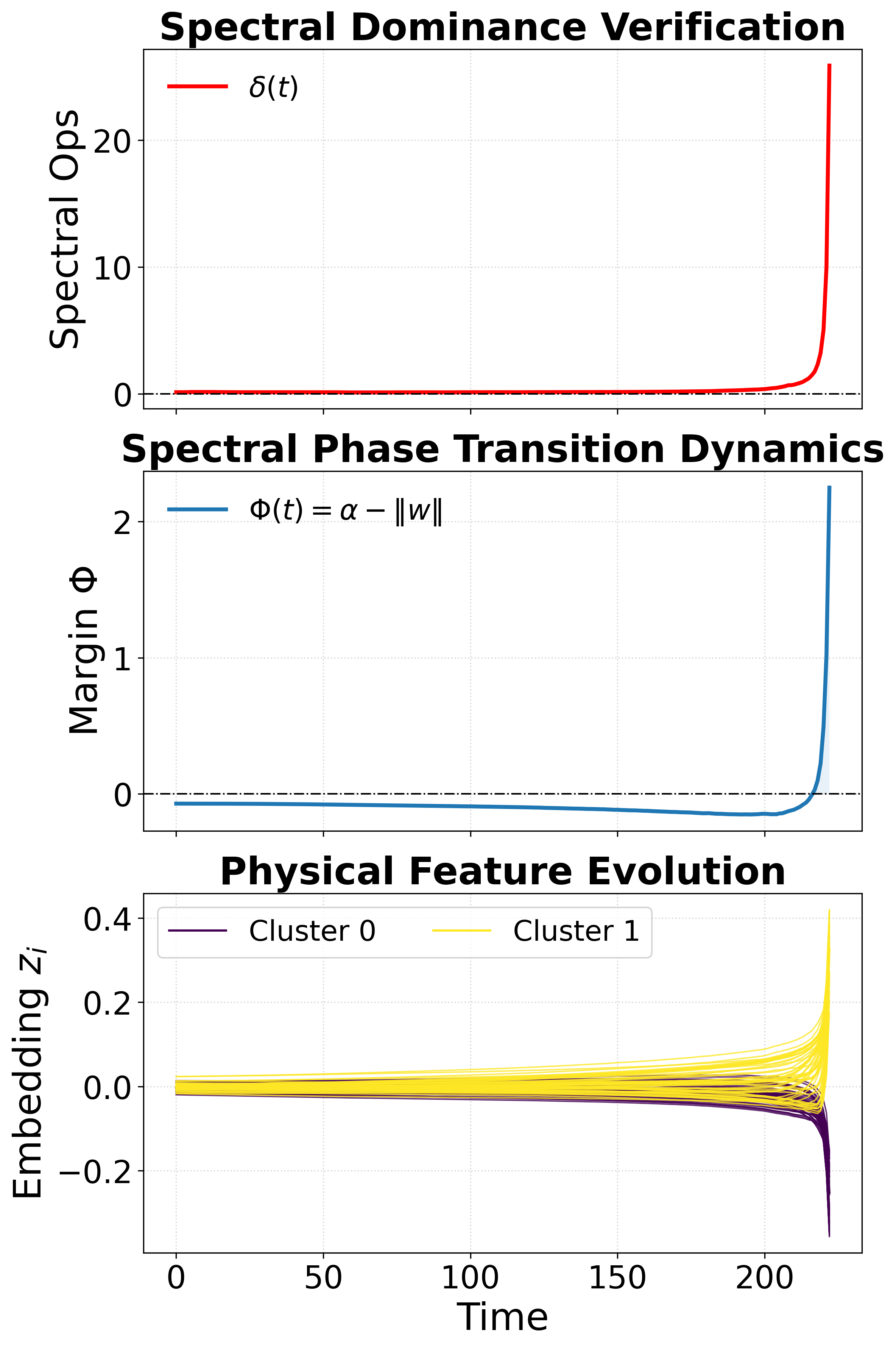}

        \vspace{-0.2cm}
        \centerline{\footnotesize (a) Synthetic Dataset (Donut)}
        \label{fig:result_donut}
    \end{minipage}
    \begin{minipage}[b]{0.45\linewidth}
        \centering
        \includegraphics[width=\linewidth, height=10cm]{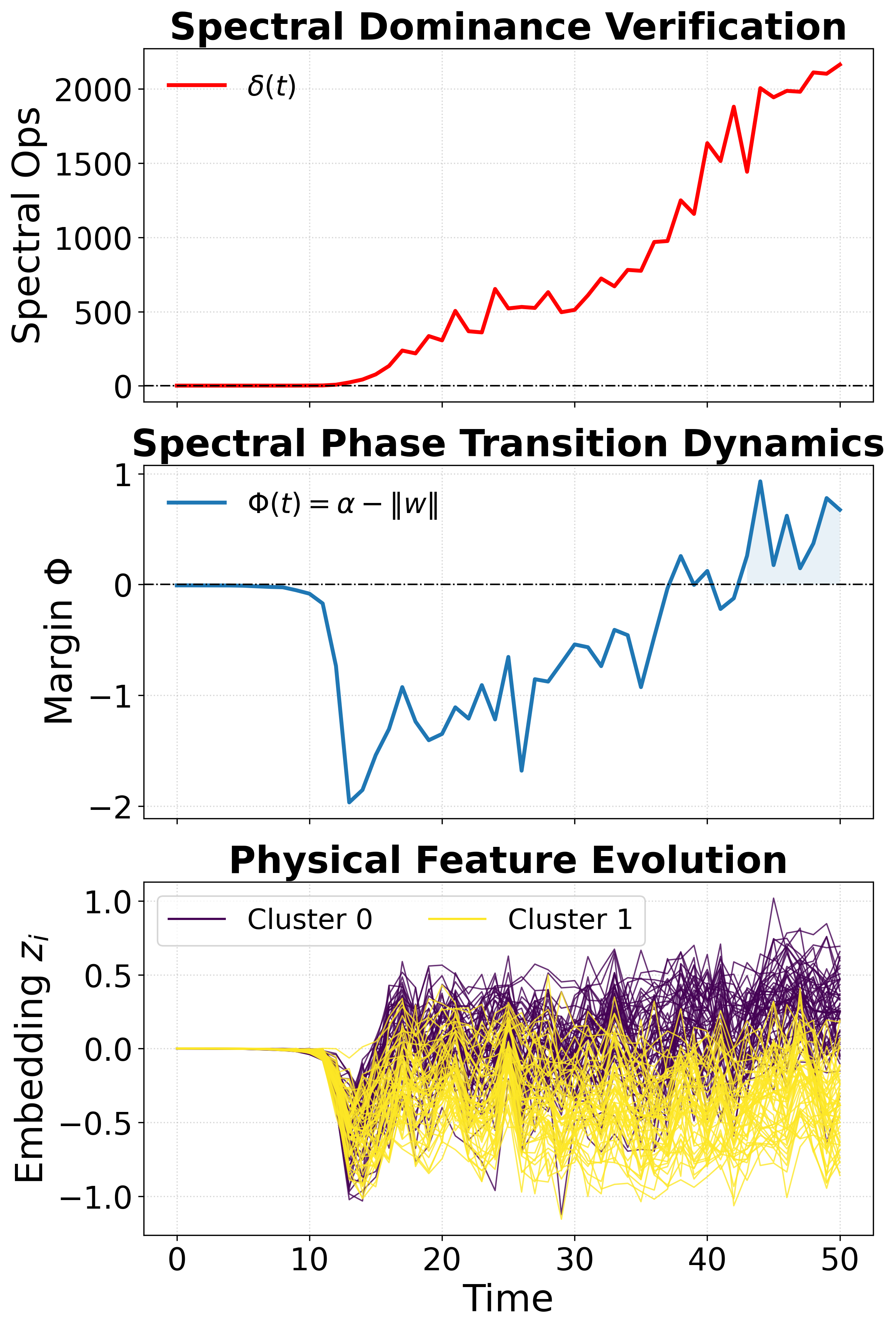}

        \vspace{-0.2cm}
        \centerline{\footnotesize (b) CIFAR-10}
        \label{fig:result_cifar}
    \end{minipage}

    \vspace{0.6em} 
    
    \begin{minipage}[b]{0.45\linewidth}
        \centering
        \includegraphics[width=\linewidth, height=10cm]{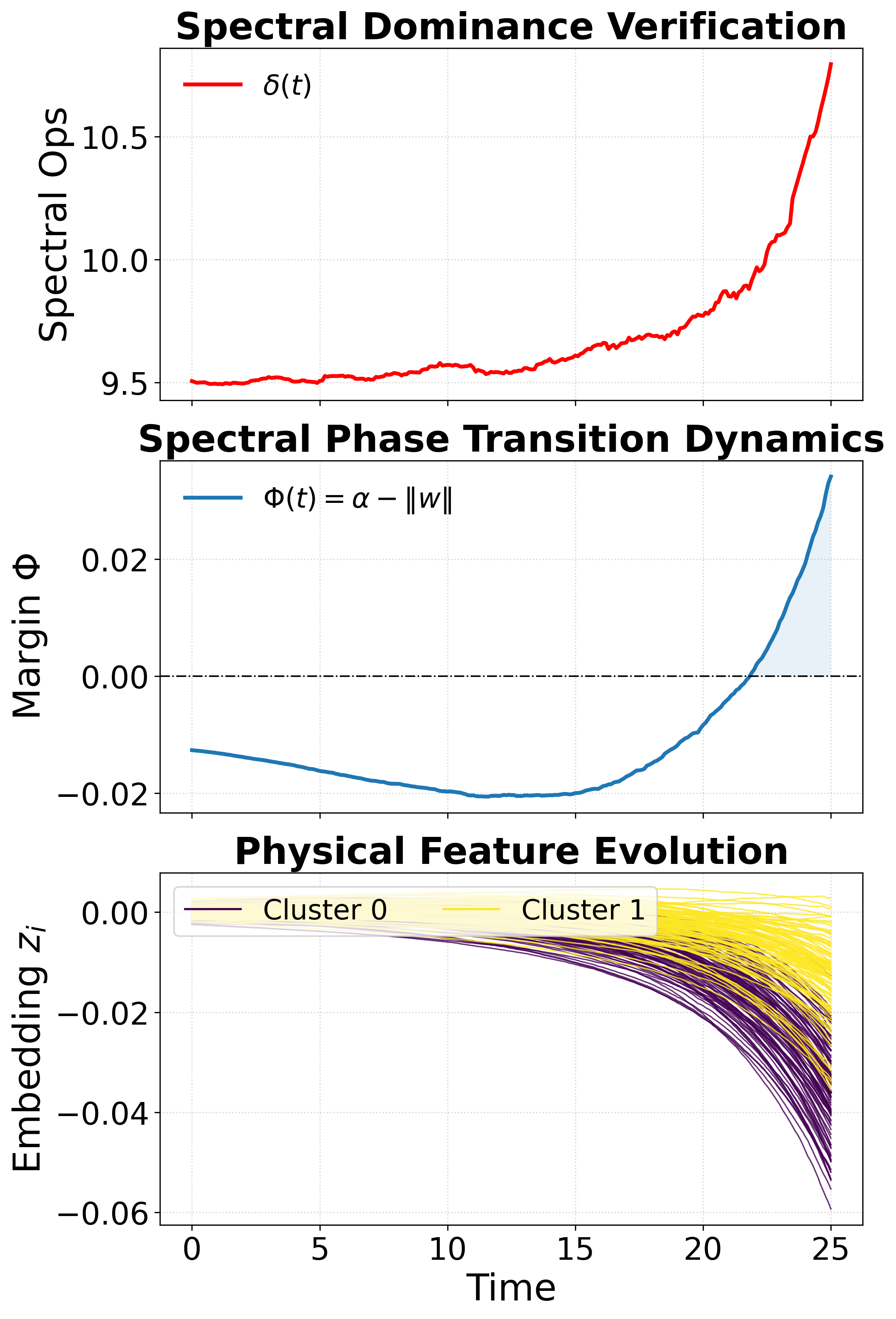}

        \vspace{-0.2cm}
        \centerline{\footnotesize (c) PDE}
        \label{fig:result_pde}
    \end{minipage}
    \begin{minipage}[b]{0.45\linewidth}
        \centering
        \includegraphics[width=\linewidth, height=10cm]{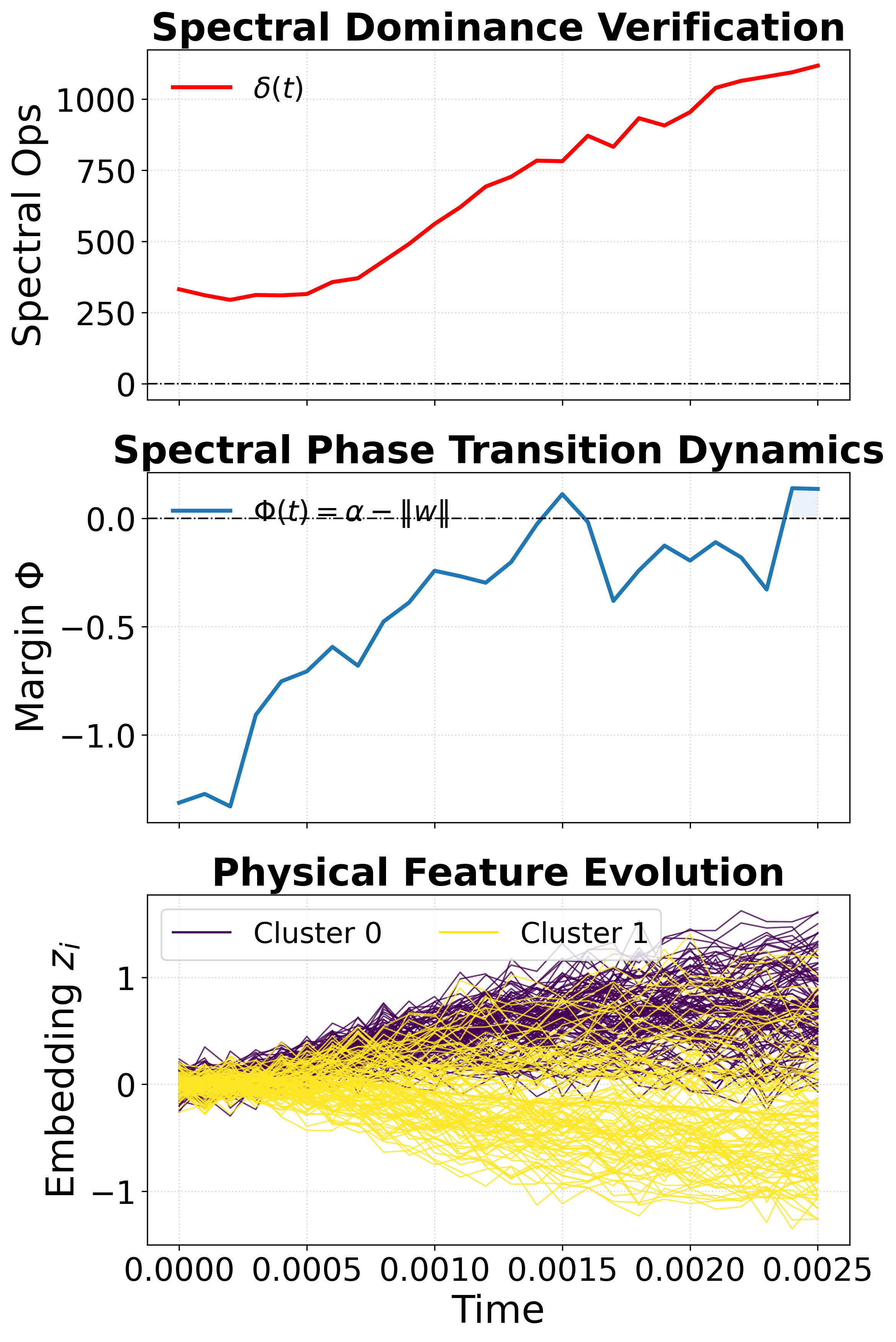}

        \vspace{-0.2cm}
        \centerline{\footnotesize (d) Text}
        \label{fig:result_text}
    \end{minipage}

    \vspace{-0.4em}
    
    \caption{Evolution of coercivity and linear separability across four domains. Each panel contains three vertical plots sharing the same x-axis, which represents time (iteration $\times$ learning rate). \textbf{Top:} The metric $\delta(t)$ quantifying coercivity. \textbf{Middle:} The metric $\Phi$ quantifying linear separability. \textbf{Bottom:} Particle evolution over time, where each particle is colored yellow or purple based on its cluster assignment. Across all domains, the results consistently show that $\delta(t)$ increases prior to the occurrence of linear separation, successfully confirming the main theorem.}
    \label{fig:main_results}
\end{figure}

\section{Conclusion and Future Work}

This paper investigates the underlying mechanisms that drive the success of contrastive learning. Crucially, we highlight that this success cannot be adequately explained by the contrastive loss function alone, which inherently contains infinitely many undesirable local minimizers. Instead, the ability to find meaningful representations depends fundamentally on the training dynamics induced by parameterized optimization. To understand this phenomenon, we analyzed the spectral quantities of the kernel matrix derived from the training dynamics to derive our main theoretical result: identifying the precise triggering condition, quantified by the coercivity metric $\delta(t)$, that must be met to guarantee linear separability.

The primary contribution of this work lies in successfully isolating and defining this exact condition. However, while our theoretical and empirical analyses confirm that reaching this condition dictates the onset of linear separation, our current framework does not capture how the neural network training inherently evolves the kernel to reach this state. Understanding the highly complex kernel dynamics that naturally drive the optimization trajectory toward this favorable triggering condition is a formidable challenge and forms the most critical direction for our future work. 

Beyond tracking the kernel evolution, other promising avenues for future research include developing a mean-field limit for these dynamics \cite{mei2018mean} and analyzing the infinite-width regime to obtain precise convergence guarantees and rates. Additionally, in many practical contrastive learning applications, the neural network is trained end-to-end, but the final projection head is discarded when extracting features. Prior works \cite{bordes2023guillotine,gui2023unraveling,wen2022mechanism} demonstrate that this practice can significantly improve downstream feature quality. Understanding how discarding this layer impacts the underlying training dynamics and linear separability remains another compelling open question for future investigation.

\appendix
\section{Appendix}

In the appendix, we present the proofs of that are missing in the main manuscript.

\subsection{Interpretation of VICReg and BYOL}
In this section, we further explore our observation that the NT-Xent loss becomes independent of the data distribution once the feature map $f$ is invariant. Consequently, the latent distribution corresponding to an invariant minimizer can be entirely unrelated to the input data. The following proposition demonstrates this.
\begin{proposition}\label{prop:illposed}
Suppose that $\mu \in \mathbb{P}(\mathbb{R}^d)$ is absolutely continuous and that the embedding map $f:\mathbb{R}^D \to \mathbb{R}^d$ is invariant under the distribution $\nu$ (satisfying \cref{eq:inv}). By applying a change of variables, we obtain the following reformulation of \cref{eq:cost-discrete}
\begin{align*}
\min_{f:\mathbb{R}^D\to\mathbb{R}^d} \mathbb{E}_{x\sim f_\#\mu} \log\left(1 + 2\mathbb{E}_{y\sim f_\#\mu} \Big[\mathds{1}_{x\neq y} \exp\bigl(\simcost_f(x,y)/\tau\bigr)\Big]\right) \\
=\min_{\rho \in \mathbb{P}(\mathbb{R}^d)} \mathbb{E}_{x\sim \rho} \log\left(1 + 2\mathbb{E}_{y\sim \rho} \Big[\mathds{1}_{x\neq y} \exp\bigl(\simcost(x,y)/\tau\bigr)\Big]\right),
\end{align*}
where $\simcost(x,y) = \simcost_{\id}(x,y) = \frac{x \cdot y}{\|x\|\|y\|}$.
\end{proposition}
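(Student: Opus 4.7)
The plan is to leverage the invariance condition (\cref{eq:inv}) to strip the NT-Xent loss of its dependence on the random augmentations $T,T'$, carry out a change of variables to push the remaining expectations onto the latent space, and then identify the resulting functional as depending on $f$ only through $\rho=f_\#\mu$. For the first step, invariance gives $f(T(x))=f(x)$ and $f(h(y))=f(y)$ for all transformations in $\mathrm{supp}(\nu)$, and the cosine-similarity definition in \cref{eq:cost-angle} immediately yields $\simcost_f(T(x),h(y))=\simcost_f(x,y)$ and $\simcost_f(T(x),T'(x))=1$. Substituting into \cref{eq:cost-discrete}, the sum over $h\in\{T,T'\}$ contributes a factor of $2$ and the denominator becomes the constant $e^{1/\tau}$; the expectation over $T,T'\sim\nu$ trivializes since the integrand no longer involves these variables, and absolute continuity of $\mu$ ensures that the indicator $\mathds{1}_{x\neq y}$ equals $1$ on a set of full $\mu\otimes\mu$-measure.

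For the second step, I apply the change-of-variables formula $\mathbb{E}_{x\sim\mu}[g(f(x))]=\mathbb{E}_{u\sim f_\#\mu}[g(u)]$ twice (once in $x$ and once in $y$), using that $\simcost_f(x,y)$ depends on $(x,y)$ only through $(f(x),f(y))$ and in fact equals $\simcost_{\id}(f(x),f(y))$. This yields the first claimed equality, expressed entirely in terms of $f_\#\mu$. For the second equality, the reformulated functional depends on $f$ only through $\rho:=f_\#\mu$, giving the trivial inequality $\inf_f\geq\inf_\rho$; the reverse direction uses the standard fact that, since $\mu$ is absolutely continuous, any target $\rho\in\mathbb{P}(\mathbb{R}^d)$ can be realized as $f_\#\mu$ by a measurable map (e.g.\ via a quantile/Rosenblatt-type construction), and such a map can be made invariant by declaring it constant along $\mathcal{T}$-orbits of a Borel section.

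The main subtlety, and the only nontrivial point, is the realizability argument in the last step: one must verify that imposing invariance does not shrink the achievable set of pushforward distributions $\rho$. In the usual setting where $\mathcal{T}$-orbits form a measurable partition of $\mathbb{R}^D$ with $\mu$ sufficiently regular on the quotient, this follows from a measurable selector argument. The rest of the proof reduces to bookkeeping of the constant factor $e^{-1/\tau}$ produced by the constant denominator (which is either absorbed into the exponent or, equivalently, leaves the argmin unchanged) and an application of the change-of-variables formula.
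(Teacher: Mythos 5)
Your proof is correct and follows essentially the same route the paper takes: the paper offers no separate proof of this proposition beyond the ``change of variables'' remark in its statement, and your argument supplies exactly those steps (invariance collapses the augmentation dependence and turns the denominator into the constant $e^{1/\tau}$, then change of variables pushes both expectations onto $f_\#\mu$). Your handling of the only nontrivial direction --- realizing an arbitrary $\rho\in\mathbb{P}(\mathbb{R}^d)$ as $f_\#\mu$ for an \emph{invariant} $f$, which genuinely requires the atomlessness of $\mu$ and some regularity of the $\mathcal{T}$-orbit structure --- is more careful than the paper, which passes over this point silently.
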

The result in \Cref{prop:illposed} shows that minimizing the NT-Xent cost with respect to an embedding map, once the map is invariant, is equivalent to minimizing over the probability distribution in the latent space. This minimization is \emph{completely independent} of the input data distribution $\mu$.

We further show that two other popular methods for learning dataset invariance with deep learning models exhibit a similar phenomenon as in \Cref{prop:illposed}, namely, that the loss function itself becomes independent of the original data structure if the embedding map becomes invariant.

First, consider the VICReg~\cite{bardes2021vicreg} loss. Given a data distribution $\mu \in \mathbb{P}(\mathbb{R}^D)$ and a distribution for the perturbation functions $\nu$, VICReg minimizes
\begin{align*}
    \min_{f:\mathbb{R}^D\rightarrow\mathbb{R}^d} \mathbb{E}_{f,g\sim\nu} \mathbb{E}_{x_1,\cdots,x_n\sim\mu} &\frac{\lambda_1}{n} \sum^n_{i=1} \|f(f(x_i)) - f(g(x_i))\|^2\\
    &+ \lambda_2\Big(v(f(f(x_1)),\cdots,f(f(x_n))) + v(f(g(x_1)),\cdots,f(g(x_n)))\Big)\\
    &+ \lambda_3\Big(c(f(f(x_1)),\cdots,f(f(x_n))) + c(f(g(x_1)),\cdots,f(g(x_n)))\Big)
\end{align*}
where $\lambda_1$, $\lambda_2$, and $\lambda_3$ are hyperparameters. The first term ensures the invariance of $f$ with respect to perturbation functions from $\nu$, $v$ maintains the variance of each embedding dimension, and $c$ regularizes the covariance between pairs of embedded points towards zero. Suppose $f$ is an invariant embedding map such that $f(T(x)) = f(x)$ for all $T \sim \nu$. Then, the above minimization problem becomes
\begin{align*}
    &\min_{f:\mathbb{R}^D\rightarrow\mathbb{R}^d} \mathbb{E}_{f,g\sim\nu} \mathbb{E}_{x_1,\cdots,x_n\sim\mu} \lambda_2\Big(v(f(x_1),\cdots,f(x_n)) + v(f(x_1),\cdots,f(x_n))\Big)\\
    &\hspace{4cm}+ \lambda_3\Big(c(f(x_1),\cdots,f(x_n)) + c(f(x_1),\cdots,f(x_n))\Big)\\
    &=\min_{f:\mathbb{R}^D\rightarrow\mathbb{R}^d} \mathbb{E}_{y_1,\cdots,y_n\sim f_\#\mu} \lambda_2\Big(v(y_1,\cdots,y_n) + v(y_1,\cdots,y_n)\Big)\\
    &\hspace{4cm}+ \lambda_3\Big(c(y_1,\cdots,y_n) + c(y_1,\cdots,y_n)\Big)
\end{align*}
Similar to the result in \Cref{prop:illposed}, the invariance term vanishes. This minimization can now be expressed as a minimization over the embedded distribution
\begin{align*}
    \min_{\rho \in \mathbb{P}(\mathbb{R}^d)} \mathbb{E}_{y_1,\cdots,y_n\sim \rho} &\lambda_2\Big(v(y_1,\cdots,y_n) + v(y_1,\cdots,y_n)\Big)\\
    &+ \lambda_3\Big(c(y_1,\cdots,y_n) + c(y_1,\cdots,y_n)\Big)
\end{align*}
This shows that given an invariant map $f$, the minimization problem becomes completely independent of the input data $\mu$, thus demonstrating the same ill-posedness as the NT-Xent loss in \Cref{prop:illposed}.

Now, consider the loss function from BYOL~\cite{grill2020bootstrap}. Given a data distribution $\mu \in \mathbb{P}(\mathbb{R}^D)$ and a distribution for the perturbation functions $\nu$, the loss takes the form
\begin{align*}
    \min_{f,q} \mathbb{E}_{f,g\sim\nu} \mathbb{E}_{x\sim\mu} \|q(f(T(x))) - f(T'(x))\|^2
\end{align*}
where $q:\mathbb{R}^d \rightarrow \mathbb{R}^d$ is an auxiliary function designed to prevent $f$ from collapsing all points $x$ to a constant in $\mathbb{R}^d$. Similar to the previous case, if we assume an invariant map $f$, the above problem becomes
\begin{align*}
    &\min_{f,q} \mathbb{E}_{x\sim\mu} \|q(f(x)) - f(x)\|^2=\min_{f,q} \mathbb{E}_{y\sim f_\#\mu} \|q(y) - y\|^2
\end{align*}
where the second equality follows from a change of variables. Again, this minimization problem can be written with respect to the embedded distribution as
\begin{align*}
    \min_{\rho \in \mathbb{P}(\mathbb{R}^d), q} \mathbb{E}_{y\sim \rho} \|q(y) - y\|^2
\end{align*}
This again shows that once the invariant map is considered, the minimization problem becomes completely independent of the input data $\mu$, highlighting the ill-posedness of the cost function.

\subsection{Further analysis of the stationary points of NT-Xent loss }

Our first result provides the first order optimality conditions of the NT-Xent loss \cref{eq:cost-dis-new}. 
\begin{proposition}\label{thm:first-general-eta}
    The first optimality condition of the problem~\cref{eq:cost-dis-new} takes the form
    \begin{multline}\label{eq:psi-first}
        \int_{\mathcal{T}}\int_{\mathbb{R}^D} \bigg\langle
            \int_{\mathcal{T}}\int_{\mathbb{R}^D} \left(\frac{\Psi'(G_{T,T'}(f,x))}{\eta_f(T(x),T'(x))} + \frac{\Psi'(G_{T,T'}(f,y))}{\eta_f(T(y),T'(y))}\right) 
            \eta_f'(T(x),T'(y))\big(f(T(x)) - f(T'(y))\big)\\
        - \big(\Psi'(G_{T,T'}(f,x))\eta_f(T(x),T'(y)) + \Psi'(G_{T',T}(f,x))\eta_f(T'(x),T(y))\big) \\
         \frac{\eta_f'(T(x),T'(x))}{\eta_f^2(T(x),T'(x))} \big(f(T(x)) - f(T'(x))\big)
         d\mu(y)  d\nu(T'),
         h(T(x))   \bigg\rangle
          d\mu(x) d\nu(T) = 0
    \end{multline}
    for all $h$ such that $f+h\in\mathcal{C}$ where $\eta'_f(x,y) = \eta'(\|f(x)-f(y)\|^2/2)$, $\Psi(t) = \log(1+t)$ and $G_{T,T'}(f,x) = \frac{\mathbb{E}_{z \sim \mu} \eta_f(T(x),T'(z))}{\eta_f(T(x),T'(x))}$.

    If $f$ is invariant to the perturbation in $\nu$, then the gradient of $L$ takes the form
    \begin{equation}\label{eq:psi-first-inv}
        \nabla L(f)(x) = \int_{\mathbb{R}^D} \left({\Psi'(G_{\id,\id}(f,x))} + {\Psi'(G_{\id,\id}(f,y))}\right) \eta_f'(x,y)( f(x) - f(y)) d\mu(y).
    \end{equation}
\end{proposition}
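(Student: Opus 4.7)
The plan is to compute the Gateaux variation $\delta L(f)[h] = \frac{d}{d\epsilon}L(f+\epsilon h)\big|_{\epsilon=0}$ directly and then relabel integration variables to collect all terms in front of a single copy of $h(T(x))$. Setting the result to zero for arbitrary admissible $h$ then gives \eqref{eq:psi-first}. To carry this out, the outer chain rule produces a factor $\Psi'(G_{T,T'}(f,x))$, and the quotient rule applied to $G_{T,T'}(f,x) = \mathbb{E}_{z\sim\mu}\eta_f(T(x),T'(z))/\eta_f(T(x),T'(x))$ splits its variation into a first term
$$\eta_f(T(x),T'(x))^{-1}\int \partial_\epsilon\eta_{f+\epsilon h}(T(x),T'(y))\,d\mu(y)$$
coming from the numerator of $G_{T,T'}$, and a second term $-G_{T,T'}(f,x)\,\eta_f(T(x),T'(x))^{-1}\,\partial_\epsilon\eta_{f+\epsilon h}(T(x),T'(x))$ coming from its denominator. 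One more chain-rule step on $\eta_f(a,b)=\eta(\|f(a)-f(b)\|^2/2)$ yields
$$\partial_\epsilon\eta_{f+\epsilon h}(a,b)\big|_{\epsilon=0} = \eta_f'(a,b)\,\langle f(a)-f(b),\,h(a)-h(b)\rangle,$$
so the full variation is a sum of two multiple integrals against $\mu\otimes\mu\otimes\nu\otimes\nu$, linear in $h(T(x))-h(T'(y))$ and in $h(T(x))-h(T'(x))$ respectively.

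The main technical step, and the main source of bookkeeping, is the symmetrization needed to convert all $h(T'(\cdot))$ contributions into $h(T(x))$. I split each inner product into its two parts. In the $h(T'(y))$ part of the first term I apply the change of variables $(x,y,T,T')\mapsto(y,x,T',T)$ in the product measure, and in the $h(T'(x))$ part of the second term I apply only the swap $T\leftrightarrow T'$. Using the symmetries $\eta_f(a,b)=\eta_f(b,a)$ and $\eta_f'(a,b)=\eta_f'(b,a)$ together with the antisymmetry $f(a)-f(b)=-(f(b)-f(a))$, these substitutions convert the $-h(T'(\cdot))$ contributions into $+h(T(x))$ contributions, at the cost of $G_{T,T'}$ being replaced by $G_{T',T}$ in the denominator contribution and by a $y$-evaluated copy in the numerator contribution, reproducing exactly the bracketed prefactors of $h(T(x))$ in \eqref{eq:psi-first}. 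Setting this integrated expression to zero for all admissible test directions $h$ via the fundamental lemma of the calculus of variations gives the stated optimality condition.

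For \eqref{eq:psi-first-inv}, I specialize to an invariant $f$: the identity $f\circ T = f$ for $\nu$-a.e.\ $T$ gives $\eta_f(T(x),T'(x)) = \eta(0)$, which is constant, and, crucially, $f(T(x))-f(T'(x)) = 0$, so the entire second-term contribution in \eqref{eq:psi-first} vanishes identically. In the surviving first-term contribution, $\eta_f(T(x),T'(y)) = \eta_f(x,y)$ and $f(T(x))-f(T'(y)) = f(x)-f(y)$, so the integrand has no $T,T'$ dependence apart from through $h(T(x))$. Restricting the variation to invariant perturbations so that $h(T(x)) = h(x)$ integrates out $\nu\otimes\nu$ to a factor of one, the ratio inside $\Psi'$ becomes $G_{\id,\id}(f,\cdot)$, and \eqref{eq:psi-first-inv} follows (with the constant $\eta(0)$ absorbed into the normalization of the gradient). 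The one point of care is that the test direction $h$ must be taken tangent to the invariant subclass, which is natural in this context.
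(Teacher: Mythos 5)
Your proposal is correct and takes exactly the approach the paper relies on: the paper does not actually print a standalone proof of this proposition, but the Gateaux derivative via chain rule on $\Psi$, quotient rule on $G_{T,T'}$, the identity $\partial_\epsilon \eta_{f+\epsilon h}(a,b)|_{\epsilon=0}=\eta_f'(a,b)\langle f(a)-f(b),h(a)-h(b)\rangle$, and the relabeling $(x,y,T,T')\mapsto(y,x,T',T)$ (resp.\ $T\leftrightarrow T'$) to collect everything against $h(T(x))$ is precisely the computation used implicitly in the proofs of the stationary-point results in the appendix. Your two points of care are also the right ones: the symmetrization does produce $G_{T',T}$ rather than $G_{T,T'}$ in some slots (the paper's first bracketed prefactor is arguably off by this index swap), and the invariant-case formula presumes the normalization $\eta(0)=1$, which the paper uses without comment.
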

Using the first optimality condition described in \Cref{thm:first-general-eta}, we can characterize the minimizer of the NT-Xent loss in \cref{eq:cost-dis-new}. The following theorem describes the possible local minimizers of \cref{eq:cost-dis-new}, considering the constraint set defined as $\mathcal{C}=\{f: \mathbb{R}^D \rightarrow \mathbb{S}^{d-1}\}$.

From the modified formulation~\cref{eq:cost-dis-new}, we can define a minimizer that minimizes the function $L(f)$ on a constraint set $\mathcal{C}=\{f:\mathbb{R}^D\rightarrow\mathbb{R}^d\}$. The following proposition provides insight into the minimizer of~\cref{eq:cost-dis-new}. The proof is provided in the appendix.
\begin{proposition}\label{cor:mini}
This proposition describes three different possible local minimizers of \cref{eq:cost-dis-new} that satisfy the Euler-Lagrange equation in \cref{eq:psi-first}.
    \begin{enumerate}
        \item Any map $f:\mathbb{R}^D\rightarrow \mathbb{R}^d$ that maps to a constant, such that 
        \[
            f(x) = c \in \mathbb{R}^d, \quad \forall x \in \mathcal{M}.
        \]
        \item In addition to the condition in \cref{eq:eta}, suppose the attraction and repulsion similarity functions $a:\mathbb{R}_{\geq0}\rightarrow \mathbb{R}$ and $r:\mathbb{R}_{\geq0}\rightarrow \mathbb{R}$ satisfy the following properties:
        \begin{enumerate}
            \item Each function is maximized at 0, where its value is 1.
            \item Each function satisfies $\lim_{t\rightarrow \infty} a(t) = 0$ and $\lim_{t\rightarrow \infty} t a'(t) = 0$.
        \end{enumerate}
        Let $f$ be a map invariant to $\mathcal{T}$. Consider a sequence of maps $\{f_k\}$ such that
        \begin{align*}
            f_k(x) = k f(x), \quad \forall x \in \mathcal{M}, \forall k \in \mathbb{N}.
        \end{align*}
        The limit $f_* = \lim_{k\rightarrow\infty} f_k$ satisfies the Euler-Lagrange equation~\cref{eq:psi-first}.
    \end{enumerate}
\end{proposition}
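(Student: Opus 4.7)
The plan is to verify that each candidate makes the first-order stationarity expression in \cref{eq:psi-first} (or its invariant reduction \cref{eq:psi-first-inv}) vanish, using only the stated properties of $\eta$ and $\Psi$.

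For item~(1), the constant map $f\equiv c$ is trivially invariant under every element of $\mathcal{T}$, so I may use the simpler form \cref{eq:psi-first-inv}. Every occurrence of $f(x)-f(y)$ in the integrand is the zero vector, so the integrand vanishes identically and the first-order condition holds. Equivalently, in the general form \cref{eq:psi-first} each vector factor $f(T(x))-f(T'(y))$ and $f(T(x))-f(T'(x))$ is zero, so the whole integrand vanishes pointwise, regardless of the values of $\Psi'$ and $\eta_f'$.

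For item~(2), I first note that if $f$ is invariant under $\nu$ then so is $f_k=kf$ for every $k>0$, so \cref{eq:psi-first-inv} applies to each $f_k$. Writing $u(x,y):=\|f(x)-f(y)\|^2/2$ so that $\|f_k(x)-f_k(y)\|^2/2=k^2 u(x,y)$ and $f_k(x)-f_k(y)=k(f(x)-f(y))$, the gradient becomes
\[
\nabla L(f_k)(x) \;=\; k\!\int\!\bigl(\Psi'(G_k(x))+\Psi'(G_k(y))\bigr)\,\eta'\!\bigl(k^2u(x,y)\bigr)\,(f(x)-f(y))\,d\mu(y),
\]
where $G_k(x):=\mathbb{E}_{y\sim\mu}\eta(k^2u(x,y))/\eta(0)$ is bounded. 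Using $|f(x)-f(y)|=\sqrt{2u(x,y)}$, the pointwise magnitude of the integrand is controlled, up to a constant, by $\sqrt{t}\,|\eta'(t)|$ evaluated at $t=k^2u(x,y)$, multiplied by the bounded factor $\Psi'(G_k(x))+\Psi'(G_k(y))$.

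The analytic core is a dominated-convergence argument. On $\{y:f(y)=f(x)\}$ the integrand is identically zero. On the complement $u(x,y)>0$, the hypothesis $\lim_{t\to\infty}t\eta'(t)=0$ gives $|\eta'(t)|=o(1/t)$, so $\sqrt{t}\,|\eta'(t)|\to 0$ as $k\to\infty$; combined with the continuity of $\eta'$ near $0$ (from differentiability of $\eta$), the map $t\mapsto\sqrt{t}\,|\eta'(t)|$ is bounded on $[0,\infty)$, furnishing the constant dominating function that Lebesgue's theorem requires (since $\mu$ is a probability measure). With $\Psi'(t)=1/(1+t)\le 1$ for $\Psi(t)=\log(1+t)$, the prefactor is uniformly bounded, so one concludes $\nabla L(f_k)(x)\to 0$ as $k\to\infty$ for every $x$, which is the sense in which the limit $f_*$ satisfies \cref{eq:psi-first}.

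The main subtlety to flag is the interpretation of the ``limit'' $f_*=\lim_{k\to\infty}f_k$: the sequence $kf$ admits a genuine pointwise limit only where $f\equiv 0$, so the correct reading is that $\{f_k\}$ is a \emph{stationarity-achieving sequence}, meaning the first-order residual $\nabla L(f_k)$ tends to zero, rather than that some literal pointwise limit satisfies \cref{eq:psi-first}. A secondary notational matter is that the proposition is phrased with separate attraction/repulsion similarities $a,r$; within the unified formulation \cref{eq:cost-dis-new} these should be read as the single function $\eta$ with $\eta(0)=1$, $\lim_{t\to\infty}\eta(t)=0$, and $\lim_{t\to\infty}t\eta'(t)=0$, all of which hold for the SimCLR choice $\eta(t)=e^{-t/\tau}$.
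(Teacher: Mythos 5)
Your proposal is correct and follows essentially the same route as the paper's proof: the constant map kills every vector factor in \cref{eq:psi-first}, and for the scaled sequence one plugs $f_k=kf$ into the invariant form of the first variation and uses $\lim_{t\to\infty}t\eta'(t)=0$ to show the expression vanishes in the limit $k\to\infty$. Your version is in fact slightly more careful than the paper's, which only argues pointwise that $k\,\eta'(k^2\|f(x)-f(y)\|^2/2)\to 0$ and passes the limit under the integral without justification, whereas you supply the dominating bound via $\sqrt{t}\,|\eta'(t)|$ and correctly flag that the conclusion is really about a stationarity-achieving sequence rather than a literal pointwise limit $f_*$.
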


\begin{proof}[Proof of \Cref{cor:mini}]
    If $f$ is a constant function, it is trivial that it satisfies \cref{eq:psi-first}.

    Let us prove the second part of the proposition.
    From the Euler-Lagrange equation in \cref{eq:psi-first}, by plugging in $f_k$ and using the fact that $f$ is invariant to $\mathcal{T}$, the Euler-Lagrange equation can be simplified to
    \begin{equation*}
        \int_{\mathbb{R}^D} \left(\Psi'({G}({f_k},\id,x)) + \Psi'({G}({f_k},\id,y))\right)  r'_{f_k}(x,y)\langle {f_k}(x) - {f_k}(y), h(x) \rangle \, d\mu(y)
    \end{equation*}
    for any $h:\mathbb{R}^D \rightarrow \mathbb{R}^d$.
    Using the invariance of $f_k$, we have
    \begin{equation}\label{eq:Psi-EL}
        =    \int_{\mathbb{R}^D} \Bigl(\Psi'({G}({f_k},\id,x)) + \Psi'({G}({f_k},\id,y))\Bigr) \Bigl(k r'_{f_k}(x,y)\Bigr)\langle {f}(x) - {f}(y), h(x)\rangle \, d\mu(y).
    \end{equation}
    Furthermore, by the assumptions on the function $r$, 
    \begin{align*}
        k r'_{f_k}(x,y) &= k r'\left(\frac{k^2\|f(x)-f(y)\|^2}{2}\right) \rightarrow 0 ,\quad \text{as } k \rightarrow \infty\\
        \Psi'(G(f_k,\id,x)) &= \Psi'\left(\mathbb{E}_{z\sim \mu} r\left(\frac{k^2\|f(x)-f(z)\|^2}{2}\right)\right) \rightarrow \Psi'(0),\quad \text{as } k\rightarrow \infty.
    \end{align*}
    Thus, \cref{eq:Psi-EL} converges to $0$ as $k\rightarrow \infty$. This proves the theorem.    
\end{proof}

\subsection{Proof of \Cref{prop:w-gf}}

\begin{proof}
    The gradient of $\mathcal{L}$ is given by
    \begin{equation}\label{eq:grad-L}
        \nabla \mathcal{L}(w) = \frac{1}{n} \sum_{i=1}^n \sum_{k=1}^d \nabla_{z^k} L(f(w, x_i), x_i) \nabla_w f^k(w, x_i)
    \end{equation}
    where $\nabla_{z^k} L(f(w,x_i))$ is a gradient of $L$ with respect to $k$-th coordinate.
    
    For simplicity of notation, let us denote by
    \begin{align*}
        f^k_i = f^k(w,x_i),\quad L_i = L(f(w,x_i), x_i).
    \end{align*}
    Thus, \cref{eq:grad-L} can be rewritten as 
    \begin{align}\label{eq:nabla-mathcal-l}
        \nabla \mathcal{L}(w) = \frac{1}{n} \sum_{i=1}^n \sum_{k=1}^d \nabla_{z^k} L_i \nabla_w f^k_i
    \end{align}
    By the definition of the loss function in \cref{eq:loss-w}, $w(t)$ satisfies the gradient flow such that
    \begin{equation}\label{eq:w-gf}
        \dot{w}(t) = -\nabla \mathcal{L}(w).
    \end{equation}
    Thus, the solution of the above ODE converges to the local minimizer of $\mathcal{L}$ as $t$ grows.

    For each $i\in \llbracket  n \rrbracket$ and $k \in \llbracket d \rrbracket$, denote by
    \begin{align*}
        z^k_i(t) = f^k_i(t).
    \end{align*}
    Let us compute the time derivative of $z^k_i$. Using a chain rule, \cref{eq:nabla-mathcal-l} and \cref{eq:w-gf},
    \begin{equation}\label{eq:z-dot}
        \begin{aligned}
            \dot z^k_i(t) = \nabla_w f^k_i \cdot \dot w(t) &= - \nabla_w f^k_i \cdot \nabla \mathcal{L}(w) 
            = - \frac{1}{n}\sum^m_{j=1} \sum^d_{l=1} \nabla_w f^k_i \cdot \nabla_w f^l_j \, \nabla_{y^l} L_j.
        \end{aligned}
    \end{equation}
    Using \cref{eq:kernel-K}, \cref{eq:z-dot}, $\dot z_i(t)$ can be written as 
    \begin{align*}
        \dot z_i (t) = - \frac{1}{n} \sum^n_{j=1}K_{ij} \nabla L_{j}(t).
    \end{align*}
    This completes the proof.
\end{proof}

\subsection{Proof of \Cref{prop:nn-toy-map}}

\begin{proof}
Consider the gradient descent iterations in \cref{eq:z-no-nn-gd}. Suppose $f$ is invariant to the perturbation from $\nu$ at $b$-th iteration, that is we have 
$f(w^{(b)},f(x)) = f(w^{(b)},x)$ for all $x \sim \mu$ and $T \sim \nu$. We want to show that, given an invariant embedding map $f(w^{(b)},\cdot)$, it remains invariant after iteration $b$. From the gradient formulation of the loss function in \Cref{thm:first-general-eta}, we have 
\[
    \nabla L(f(w^{(b)},x),x) = - \int_{\mathbb{R}^D} \Psi'(x,y) (f(w^{(b)},x) - f(w^{(b)},y)) \, d\mu(y),
\]
where $\Psi'(x,y) = \Big(\Psi'(G(f(w^{(b)},\cdot),x)) + \Psi'(G(f(w^{(b)},\cdot),y))\Big) \eta_{f(w^{(b)},\cdot)}'(x,y)$. 

From the gradient descent formulation in \cref{eq:z-no-nn-gd}, we have
\[
    f(w^{(b+1)},f(x_i)) = f(w^{(b)},f(x_i)) - \sigma \nabla L(f(w^{(b)},f(x_i))),
\]
which gives
\[
    f(w^{(b+1)},f(x_i)) = f(w^{(b)},f(x_i)) + \sigma \int_{\mathbb{R}^D} \Psi'(x,y) (f(w^{(b)},f(x_i)) - f(w^{(b)},y)) \, d\mu(y),
\]
and since $f(w^{(b)},f(x_i)) = f(w^{(b)},x_i)$ by invariance, this simplifies to
\[
    f(w^{(b+1)},x_i) - \sigma \nabla L(f(w^{(b)},x_i),x_i) = f(w^{(b+1)},x_i),
\]
which shows that $f(w^{(b+1)},x_i)$ is invariant for all $i \in \lrbr{n}$. Therefore, the embedding map remains invariant throughout the optimization process. 

Now, consider the gradient descent iteration with a neural network in \cref{eq:z-nn-gd}. Suppose $f$ is invariant to perturbations from $\nu$ and satisfies
\begin{equation}\label{eq:prcond}
    \nabla_w f(w^{(b)},f(x)) = \nabla_w f(w^{(b)},x),\quad \forall x\sim \mu, f\sim\nu.
\end{equation}
Denote the kernel matrix function $K_{ij}$  given a perturbation function $T \sim \nu$ as
\[
    (K_{ij}(w^{(b)},f))^{kl} = (\nabla_w f^k(w^{(b)},f(x_i)))^\top(\nabla_w f^l(w^{(b)},f(x_j))).
\]
Then, we have
\begin{align*}
    f(w^{(b+1)},f(x_i)) &= f(w^{(b)},f(x_i)) - \frac{\sigma}{n} \sum_{j=1}^n K_{ij}(w^{(b)},f) \nabla L(f(w^{(b)},f(x_i)),x_i)\\
    &= f(w^{(b)},x_i) - \frac{\sigma}{n} \sum_{j=1}^n K_{ij}(w^{(b)},\id) \nabla L(f(w^{(b)},x_i),x_i)\\ 
    &= f(w^{(b+1)},x_i).
\end{align*}
Thus, if $f$ is invariant at the $b$-th iteration, it remains invariant. However, note that this result no longer holds if the condition in \cref{eq:prcond} fails, meaning that $f$ is not invariant for $b+1$-th iteration. This completes the proof.
\end{proof}

\subsection{Explicit Formula for the Neural Network Kernel Matrix}

In this section, we derive an explicit formula for the kernel matrix to better understand how the neural network kernel influences the gradient descent dynamics of $f(w(t), x)$ for data points $x \in X$. This formula provides insight into how the kernel governs the evolution of features during training.

While the main paper focuses on a simplified case where the embedding map outputs real-valued features (i.e., $f: \mathbb{R}^{MD + D} \rightarrow \mathbb{R}$), here we consider a more general setting where the embedding map outputs vector-valued features in $\mathbb{R}^d$. This generalization allows us to derive a broader formula for the kernel matrix. Specifically, we consider a one-hidden-layer fully connected neural network of the form
$f: \mathbb{R}^{MDd + D} \rightarrow \mathbb{R}^d.$

\begin{equation}\label{eq:f-nn}
f(w(t), x)= A^\top \sigma(w(t)x),
\end{equation}
where $x\in\mathbb{R}^D$ is a data sample and $A \in \mathbb{R}^{Md \times d}$ is a constant matrix defined as
\begin{equation}\label{eq:def-A}
A = \frac{1}{\sqrt{MD}} \begin{bmatrix}
\mathbf{1}_{M \times 1} & \mathbf{0}_{M \times 1} & \cdots & \mathbf{0}_{M \times 1} \\
\mathbf{0}_{M \times 1} & \mathbf{1}_{M \times 1} & \cdots & \mathbf{0}_{M \times 1} \\
\vdots & \vdots & \ddots & \vdots \\
\mathbf{0}_{M \times 1} & \mathbf{0}_{M \times 1} & \cdots & \mathbf{1}_{M \times 1}
\end{bmatrix} \in \mathbb{R}^{Md \times d},
\end{equation}
where $\mathbf{1}_{M \times 1}$ and $\mathbf{0}_{M \times 1}$ represent the $M$-dimensional vectors of ones and zeros, respectively. Additionally, $w(t) = (b^k_p(t))_{k \in \lrbr{D}, p \in \lrbr{Md}} \in \mathbb{R}^{Md \times D}$ is the weight matrix, and $\sigma$ is a differentiable activation function applied element-wise. 

Note that $A$ acts as an averaging matrix that, when multiplied by the $(Md)$-dimensional vector $\sigma(w(t)x)$, produces a $d$-dimensional vector. Furthermore, we assume that the parameters of $w(t)$ are uniformly bounded, such that there exists a constant $C$ with $|b^k_p(t)| < C$ for all $t \geq 0$, $k$, and $p$.

\begin{proposition}\label{prop:explicit-kernel}
    Given the description of the embedding map in \cref{eq:f-nn}, the kernel matrix $K^{kl}_{ij}$ defined in \cref{eq:kernel-K} can be explicitly written as
\begin{align}\label{eq:kernel-simple}
        \begin{aligned}
            K^{kl}_{ij}
                =\frac{\mathbf{1}_{k=l}}{MD} x_i^{\top} x_j \sum^{kM}_{p=(k-1)M+1} \sigma'(b_px_i) \sigma'(b_px_j).
        \end{aligned}
\end{align}
where $\mathbf{1}_{k=l}$ is an indicator function that equals $1$ if $k=l$ and $0$ otherwise.

\end{proposition}

\begin{proof}
    We describe the matrices $A \in \mathbb{R}^{Md \times d}$ and $B \in \mathbb{R}^{Md \times D}$ as follows
\begin{align*}
    A &= \begin{bmatrix}
        | & | & \cdots & | \\
        a^1 & a^2 & \cdots & a^d \\
        | & | & \cdots & |
    \end{bmatrix}
    =
    \begin{bmatrix}
        - & a_1 & - \\
        & \vdots & \\
        - & a_{Md} & -
    \end{bmatrix}
    =
    \begin{bmatrix}
        a_1^1 & \cdots & a_1^d \\
        \vdots & \cdots & \vdots \\
        a_{Md}^1 & \cdots & a_{Md}^d
    \end{bmatrix}, \\
    B(t) &= \begin{bmatrix}
        | & | & \cdots & | \\
        b^1(t) & b^2(t) & \cdots & b^D(t) \\
        | & | & \cdots & |
    \end{bmatrix}
    =
    \begin{bmatrix}
        - & b_1(t) & - \\
        & \vdots & \\
        - & b_{Md}(t) & -
    \end{bmatrix}
    =
    \begin{bmatrix}
        b_1^1(t) & \cdots & b_1^D(t) \\
        \vdots & \cdots & \vdots \\
        b_{Md}^1(t) & \cdots & b_{Md}^D(t)
    \end{bmatrix}.
\end{align*}
In this notation, $a^k$ and $b^k$ are $Md$-dimensional column vectors, $a_p$ and $b_p$ are $d$- and $D$-dimensional row vectors, and $a^k_p$ and $b^k_p$ are scalars.

    We can write $f^k$ with respect to $a^k_i$ and $b^k_i$.
    \begin{align*}
        f^k(B,x) = (a^{k})^{\top} \sigma(Bx) = \sum^{Md}_{i=1} a^{k}_i \sigma\Big( b_i x\Big)
        &= \frac{1}{\sqrt{MD}} \sum^{kM}_{i=(k-1)M+1} \sigma\Big( b_i x\Big)
    \end{align*}
    where the last equality uses the definition of a matrix $A$ in \cref{eq:def-A}.
    By differentiating with respect to $b^l_i$, we can derive explicit forms for the gradient of $f^k$ with respect to a weight matrix $B$.
    \begin{align*}
        \nabla_w f^k(B,x) &= \Bigl(a^k \odot \sigma'(Bx)\Bigr) x^{\top}\\
        &= \begin{bmatrix}
            a_1^k \sigma'(b_1x) x^1 & \cdots & a^k_1 \sigma'(b_1x) x^D\\ 
            \vdots & \ddots & \vdots\\
            a^k_M \sigma'(b_Mx) x^1 & \cdots & a^k_M \sigma'(b_Mx) x^D\\ 
        \end{bmatrix} \\
        &= \frac{1}{\sqrt{MD}} \begin{bmatrix}
            0 & \cdots & 0\\
            \vdots & \ddots & \vdots\\
            0 & \cdots & 0\\
             \sigma'(b_1x) x^1 & \cdots & \sigma'(b_1x) x^D\\ 
            \vdots & \ddots & \vdots\\
            \sigma'(b_Mx) x^1 & \cdots & \sigma'(b_Mx) x^D\\ 
            0 & \cdots & 0\\
            \vdots & \ddots & \vdots\\
            0 & \cdots & 0\\
        \end{bmatrix} \in \mathbb{R}^{Md \times D}
    \end{align*}
    where the row index of nonzero entries ranges from $(k-1)M+1$ to $kM$.

    Define an inner product such that for $h\in\mathbb{R}^{Md\times D}$
    \begin{align*}
        \langle \nabla_w f^k(B,x), h \rangle, \quad k \in\lrbr{D}.
    \end{align*}
    Now we are ready to show the explicit formula of the inner product $\langle \nabla_w f^k, \nabla_w f^l\rangle$.
    \begin{align*}
            \langle \nabla_w f^k(B,x_i), \nabla_w f^l(B,x_j) \rangle
        &= \frac{\mathbf{1}_{k=l}}{MD}  (x_i^{\top} x_j )\sum^{kM}_{p=(k-1)M+1} \sigma'(b_px_i) \sigma'(b_px_j)
    \end{align*}
    where $\mathbf{1}_{k=l}$ is an indicator function that equals $1$ if $k=l$ and $0$ otherwise.
    Therefore, the kernel matrix takes the form
    \begin{align*}
            \begin{aligned}
                (K^{kl})_{ij}
                =\frac{\mathbf{1}_{k=l}}{MD} (x_i^{\top} x_j )\sum^{kM}_{p=(k-1)M+1} \sigma'(b_px_i) \sigma'(b_px_j).
            \end{aligned}
    \end{align*}
\end{proof}

{
From \Cref{prop:explicit-kernel}, as done in NTK paper \cite{jacot2018neural}, one can consider how the kernel converges as the width of the neural network approaches infinity, i.e., as $M \to \infty$ in \cref{eq:f-nn}. The following proposition shows the formulation of the limiting kernel in the infinite-width neural network.
\begin{proposition}\label{prop:kernel-ntk}
    Suppose the weight matrix $B$ satisfies that each row vector $b_i$, for $i \in \{1, \dots, Md\}$, consists of independent and identically distributed random variables in $\mathbb{R}^D$ with a Gaussian distribution. Also, suppose the activation function is $\sigma(x) = x_+ = \max\{x, 0\}$. Then, as $M \to \infty$, the kernel matrix converges to $K^\infty\in\mathbb{R}^{d\times d}$, where
    \[
        K_{ij}^{\infty} = \frac{x_i^\top x_j}{D}
        \left[ \frac{1}{2} - \frac{1}{2\pi} \arccos\left( \frac{x_i^\top x_j}{\|x_i\| \|x_j\|} \right) \right] \pmb{I}_{d\times d},\quad \text{$\pmb{I}_{d\times d}$ is an identity matrix}.
    \]
\end{proposition}
}

\subsection{Proof of \Cref{prop:contrastive_taylor}}

\begin{proof}[Proof of \Cref{prop:contrastive_taylor}]
We decouple the continuous loss into a global repulsive term $T_{\mathrm{push}}(z)$ and a local attractive term $T_{\mathrm{pull}}(z)$. Expanding the log-sum-exp functional in $T_{\mathrm{push}}(z)$ near the origin gives
\[
T_{\mathrm{push}}(z) = - \frac{1}{\tau n m } z^\top (I - P_{\mathrm{global}})z + \mathcal{O}\left(\frac{\|z\|^4}{\tau^2(nm)^2}\right).
\]
Expanding the quadratic forms in $T_{\mathrm{pull}}(z)$ isolates the local attraction
\[
T_{\mathrm{pull}}(z) = \frac{1}{\tau nm} z^\top (I - P_{\mathrm{mean}}) z.
\]
Combining these yields
\[
\widetilde{\mathcal L}(z) = - \frac{1}{\tau nm} z^\top (P_{\mathrm{class}} - P_{\mathrm{global}}) z - \frac{1}{\tau nm} z^\top (P_{\mathrm{mean}} - P_{\mathrm{class}}) z + \mathcal{O}\left(\frac{\|z\|^4}{\tau^2(nm)^2}\right).
\]
Because $(P_{\mathrm{mean}} - P_{\mathrm{class}})$ is an orthogonal projection, the residual quadratic form is exactly $\|(P_{\mathrm{mean}} - P_{\mathrm{class}}) z\|^2$. Using the spectral gap bound derived from \Cref{assum:augmentation_graph}, this residual is bounded strictly by $\frac{\eta}{\lambda_2} \|(I - P_{\mathrm{mean}}) z\|^2$.

Taking the gradient with respect to $z$, the leading linear term evaluates directly to $- \frac{1}{\tau} H z$. The gradient of the residual form evaluates to $- \frac{2}{\tau nm} (P_{\mathrm{mean}} - P_{\mathrm{class}}) z$, whose Euclidean norm is bounded by $\frac{2}{\tau nm} \sqrt{\frac{\eta}{\lambda_2}} \|(I - P_{\mathrm{mean}}) z\|$. Absorbing constants into the asymptotic notation yields the final result.
\end{proof}

\subsection{Proof of \Cref{prop:local_ntk_window}}

\begin{proof}[Proof of \Cref{prop:local_ntk_window}]

Let $K(t)$ denote the kernel matrix from the neural network function. Under gradient flow
of the network parameters, along with the assumption (iii), the feature dynamics satisfy the NTK identity
\[
\dot z(t)
=
- K(t)\,\nabla_z \widetilde{\mathcal L}(z(t))
=
\frac{1}{\tau} K(t) H z(t)
+
R(t),
\]
where the remainder satisfies
\[
\|R(t)\|
\leq C_R C_K \left(\frac{\nu\|z\|}{nm\tau} + \frac{\|z\|^3}{(nm)^2\tau^2}\right)
\]
where we set $\nu = \sqrt{\eta/\lambda_2}$.
We now derive a differential inequality for $\|z(t)\|$.
Using

\[
\frac{d}{dt}\|z(t)\|
=
\frac{\langle z(t), \dot z(t) \rangle}{\|z(t)\|},
\]
and substituting the evolution equation gives

\[
\frac{d}{dt}\|z(t)\|
\leq
\frac{1}{\tau}\|K(t)\|_{\mathrm{op}}\,\|H\|_{\mathrm{op}}\,\|z(t)\|
+
\|R(t)\|.
\]
Using $\|K(t)\|\leq C_K$ and the operator norm $\|H\|_{\mathrm{op}} \leq \frac{2}{nm},$
we obtain
\begin{align*}
    \frac{d}{dt}\|z(t)\|
&\leq
\frac{2 C_K}{nm \tau}\|z(t)\|
+
C_R C_K \left(\frac{\nu\|z(t)\|}{nm\tau} + \frac{\|z(t)\|^3}{(nm)^2\tau^2}\right)\\
&= \left(\frac{C_K(2 + C_RC_K \nu)}{nm \tau} \right)\|z(t)\|
+
\frac{C_R C_K\|z(t)\|^3}{(nm)^2\tau^2}
\end{align*}
Solving the differential inequality yields
\begin{align*}
    \|z(t)\| 
    \leq \frac{e^{at} \|z(0)\|}{\left(1 - \frac{C_R}{nm\tau(2+C_R \nu)}(e^{2at}-1)\|z(0)\|^2\right)^{1/2}}
\end{align*}
where $a=\frac{C_K(2 + C_R\nu)}{nm\tau}$.
By setting
\begin{align*}
    T_\varepsilon
:=
\frac{nm\tau}{2C_K(2 + C_R\nu)}\log \left(\frac{1}{\varepsilon}\right),
\end{align*}
it follows that for $t \leq T_\varepsilon$, we have
\begin{align*}
    \|z(t)\| \leq \sqrt{\varepsilon}.
\end{align*}
This completes the proof.
\end{proof}

\subsection{Proof of \Cref{prop:mean_field_wgf}}
\begin{proof}[Proof of \Cref{prop:mean_field_wgf}]
Under the assumption that $w(t)$ is bounded and the feature Jacobian is uniformly bounded and Lipschitz, the discrete feature tangent kernel converges uniformly to the continuous integral operator $\mathcal{K}_t$. For any vector field $u \in L^2(\rho_t; \mathbb{R}^d)$, this operator maps purely over the spatial domain
\[(\mathcal{K}_t u)(z) = \int_{\mathbb{R}^d} K_t(z, z') u(z') \,\mathrm{d}\rho_t(z').\]
The velocity field driving the density in \cref{eq:eulerian_continuity} is exactly given by $v_t[\rho_t](z) = (\mathcal{K}_t F[\rho_t])(z)$, where $F[\rho_t](z) = -\nabla_z \frac{\delta \mathcal{J}}{\delta \rho_t}(z)$ is the intrinsic driving force.

To show that the exact velocity field $v_t[\rho_t](z)$ is globally spatially Lipschitz, we first find the spatial gradient of $v_t$,
\begin{align*}
    \|\nabla_z v_t[\rho_t](z)\| \leq \int_{\R^d} \nabla_z K_t(z,z') F[\rho_t](z') d\rho_t(z').
\end{align*}
Applying the Cauchy-Schwarz inequality for integrals to decouple the Jacobian norm and the force norm
\[\|\nabla_z v_t[\rho_t](z)\| \leq \left( \int_{\mathbb{R}^d} \|\nabla_z K_t(z,z')\|^2 \,\mathrm{d}\rho_t(z') \right)^{1/2} \left( \int_{\mathbb{R}^d} \|F[\rho_t](z')\|^2 \,\mathrm{d}\rho_t(z') \right)^{1/2}.\]
By the assumptions, the feature Jacobian is uniformly bounded and spatially Lipschitz. Because $\rho_t$ is a probability measure, $\int \mathrm{d}\rho_t(z') = 1$, it follows that $\|\nabla_z v_t[\rho_t](z)\|$ is uniformly bounded.
By the assumption, the initial feature distribution is bounded such that $\|Z_0\|_{L^2(\rho_0)} \leq \varepsilon$. Because the continuous Lagrangian flow $Z_t$ is continuous in time, there strictly exists a time window $t \in [0, T]$ such that the features remain within the expanded local neighborhood $\|Z_t\|_{L^2(\rho_0)} \leq \sqrt{\varepsilon}$. 

Within this active time window $[0, T]$, analogous to the discrete expansion in Proposition \ref{prop:contrastive_taylor}, the intrinsic driving force of the continuous population functional $\mathcal{J}[\rho_t]$ admits a rigorous Taylor expansion. Expanding the spatial gradient of the first variation around the origin yields
\[F[\rho_t](z) = \frac{1}{\tau} (\mathcal{H} z) + \mathcal{R}_t'(z),\]
where $\mathcal{H} = 2(\mathcal{P}_{\mathrm{class}} - \mathcal{P}_{\mathrm{global}})$ is the linear operator defined by the continuous projections, and $\mathcal{R}_t'$ encapsulates the higher-order residual terms and the bounded augmentation variance. Within this linearization window, $\mathcal{H}$ is strictly bounded. The operators $\mathcal{P}_{\mathrm{class}}$ and $\mathcal{P}_{\mathrm{global}}$ operate as orthogonal projections in $L^2(\rho_t)$. Since orthogonal projections have an operator norm of at most $1$, we have $\|\mathcal{H}\|_{\mathrm{op}} \leq 2(\|\mathcal{P}_{\mathrm{class}}\|_{\mathrm{op}} + \|\mathcal{P}_{\mathrm{global}}\|_{\mathrm{op}}) \leq 4$. Thus, $\mathcal{H}$ is a bounded linear operator. Furthermore, within this restricted spatial domain, the residual $\mathcal{R}_t'(z)$ evaluates over a compact neighborhood and is therefore bounded. Because both terms composing $F[\rho_t]$ are bounded, it follows that $F[\rho_t]$ is strictly bounded in $L^2(\rho_t)$. Setting the kernel constant $C_K$, we explicitly establish the globally bounded Lipschitz constant $L$
\[\sup_z \|\nabla_z v_t[\rho_t](z)\| \leq C_K \|F[\rho_t]\|_{L^2(\rho_t)} := L < \infty.\]

To establish measure-Lipschitz continuity, we leverage this same structural linearization of the contrastive loss. Because the projection operators $\mathcal{P}_{\mathrm{class}}$ and $\mathcal{P}_{\mathrm{global}}$ represent bounded linear integrations over the measure $\rho_t$, the dominant linearized force $z \mapsto \frac{1}{\tau}\mathcal{H}z$ is intrinsically measure-Lipschitz. For any two measures $\rho, \nu \in \mathcal{P}_2(\mathbb{R}^d)$ concentrated in this local neighborhood, taking the difference of their corresponding linearized forces and passing it through the bounded kernel operator $\mathcal{K}_t$ (where $\|\mathcal{K}_t\|_{\mathrm{op}} \leq C_K$) yields a strict measure-Lipschitz bound
\[\|v_t[\rho](z) - v_t[\nu](z)\| \leq M W_2(\rho, \nu),\]
where $M$ is a constant depending on $\tau$, $C_K$, and the local bounding domain.

With the spatial and measure Lipschitz bounds ($L$ and $M$) established, let $\gamma_0 \in \Gamma(\rho_0, \rho^{(n,m)}_0)$ be the initial optimal transport coupling. Let $Z_t(x)$ and $\tilde{Z}_t(y)$ denote the characteristic Lagrangian trajectories driven by the continuous velocity $v_t[\rho_t]$ and the empirical velocity $v_t[\rho^{(n,m)}_t]$, respectively. Differentiating the squared Wasserstein distance bounded by this coupling yields
\[\frac{\mathrm{d}}{\mathrm{d}t} \int \|Z_t(x) - \tilde{Z}_t(y)\|^2 \,\mathrm{d}\gamma_0 \leq 2 \int \|Z_t(x) - \tilde{Z}_t(y)\| \cdot \|v_t[\rho_t](Z_t(x)) - v_t[\rho^{(n,m)}_t](\tilde{Z}_t(y))\| \,\mathrm{d}\gamma_0.\]
Applying Young's inequality $(2ab \leq a^2 + b^2)$, splitting the velocity difference via the triangle inequality, and bounding each term using the respective Lipschitz constants $L$ and $M$ results in
\[\frac{\mathrm{d}}{\mathrm{d}t} \int \|Z_t(x) - \tilde{Z}_t(y)\|^2 \,\mathrm{d}\gamma_0 \leq (1 + 2L^2 + 2M^2) \int \|Z_t(x) - \tilde{Z}_t(y)\|^2 \,\mathrm{d}\gamma_0.\]
Setting $C = 1 + 2L^2 + 2M^2$ and applying Grönwall's inequality yields the stability bound
\[W_2^2(\rho_t, \rho^{(n,m)}_t) \leq \int \|Z_t(x) - \tilde{Z}_t(y)\|^2 \,\mathrm{d}\gamma_0 \leq e^{Ct} W_2^2(\rho_0, \rho^{(n,m)}_0).\]
Because the initial empirical density weakly converges, $W_2(\rho_0, \rho^{(n,m)}_0) \to 0$ as $n,m \to \infty$. Consequently, $W_2(\rho_t, \rho^{(n,m)}_t) \to 0$ for all $t \geq 0$, proving the limiting measure $\rho_t$ is uniquely governed by the continuous PDE in \cref{eq:eulerian_continuity}.

Finally, for $t \in [0, T]$, substituting the functional linearization $F[\rho_t] = \frac{1}{\tau} \mathcal{H} Z_t + \mathcal{R}_t'$ back into the characteristic velocity equation natively recovers the rigorous linearized Lagrangian descent
\[\dot{Z}_t(z_0) = (\mathcal{K}_t F[\rho_t])(Z_t(z_0)) = \frac{1}{\tau} (\mathcal{K}_t \mathcal{H} Z_t)(z_0) + \mathcal{R}_t(z_0).\]
Since the integral operator is bounded by $\|\mathcal{K}_t\|_{\mathrm{op}} \leq C_K$, the continuous residual $\mathcal{R}_t = \mathcal{K}_t \mathcal{R}_t'$ strictly scales identically to the discrete bound established in Proposition \ref{prop:contrastive_taylor}, completing the proof.
\end{proof}

\subsection{Proof of \Cref{thm:latent_wgf_separation}}\label{app:continuum_proof}
\begin{proof}[Proof of \Cref{thm:latent_wgf_separation}]
Consider the following orthogonal decomposition of the continuous feature map $Z_t \in L^2(\rho_0)$
\begin{equation}\label{thmpr1_cont}
    Z_t = \alpha(t) v_c + W_t + c(t)\mathbf{1},
\end{equation}
where $\alpha(t) = \langle v_c, Z_t \rangle_{L^2(\rho_0)}$ is the macroscopic separation amplitude, $W_t = \mathcal{P}_W Z_t$ is the zero-mean intra-cluster dispersion, and $c(t)\mathbf{1} = \mathcal{P}_{\mathrm{global}} Z_t$ is the uniform global shift. By definition of the orthogonal projection operators in $L^2(\rho_0)$, the functions $v_c$, $W_t$, and $\mathbf{1}$ are mutually orthogonal.

Next, we determine how the linear operator $\mathcal{H} = 2(\mathcal{P}_{\mathrm{class}} - \mathcal{P}_{\mathrm{global}})$ affects each component. Because $v_c$ lies strictly in the class-discriminative subspace, $\mathcal{P}_{\mathrm{class}} v_c = v_c$. Consequently, the operator evaluates to $\mathcal{H} v_c = 2(\mathcal{P}_{\mathrm{class}} - \mathcal{P}_{\mathrm{global}}) v_c = 2v_c$. The operator completely annihilates the global mean, yielding $\mathcal{H}(c(t)\mathbf{1}) = 0$. By definition, $W_t \in \operatorname{Range}(\mathcal{I} - \mathcal{P}_{\mathrm{class}})$, which implies $\mathcal{P}_{\mathrm{class}} W_t = 0$. Because $W_t$ inherently possesses a zero global mean, $\mathcal{P}_{\mathrm{global}} W_t = 0$. Substituting these identities perfectly annihilates the cross-coupling, yielding $\mathcal{H} W_t = 0$.

Applying $\mathcal{H}$ to \cref{thmpr1_cont} and substituting it into the linearized continuous gradient flow equation $\dot{Z}_t = \frac{1}{\tau} \mathcal{K}_t \mathcal{H} Z_t + \mathcal{R}_t$, we obtain
\begin{equation}
\dot{Z}_t = \frac{2}{\tau} \alpha(t) \mathcal{K}_t v_c + \mathcal{R}_t.
\end{equation}
We extract the time evolution of the macroscopic separation amplitude $\alpha(t)$ by taking the $L^2(\rho_0)$ inner product with the fixed linear test function $v_c$
\begin{equation*}
\dot{\alpha}(t) = \langle v_c, \dot{Z}_t \rangle_{L^2(\rho_0)} = \frac{2}{\tau} \alpha(t) \langle v_c, \mathcal{K}_t v_c \rangle_{L^2(\rho_0)} + \langle v_c, \mathcal{R}_t \rangle_{L^2(\rho_0)}.
\end{equation*}
By the spectral alignment bound $\delta$ and the residual bound $\|\mathcal{R}_t\|_{L^2(\rho_0)} \leq \tilde{R}$, 
\begin{equation*}
\dot{\alpha}(t) \geq \frac{2\delta}{\tau}  \alpha(t) - \tilde{R}.
\end{equation*}
Solving the differential inequality, we obtain
\begin{align}\label{thmpralpha_cont}
    \alpha(t) 
    \geq 
    e^{\frac{2\delta}{\tau}(t-t_*)}\left(\alpha(t_*) - \gamma\right) + \gamma,
\end{align}
where $\gamma = \frac{\tau \tilde{R}}{2\delta}$ is defined in \cref{eq:continuous_initial_condition}. By \cref{eq:continuous_initial_condition}, $\alpha(t_*) - \gamma > 0$, and thus $\alpha(t)$ increases exponentially.

Next, by orthogonality in \cref{thmpr1_cont},
\[
\|Z_t\|_{L^2(\rho_0)}^2 = \alpha(t)^2 + \|W_t\|_{L^2(\rho_0)}^2 + c(t)^2.
\]
By the continuous analogue of Proposition \ref{prop:local_ntk_window}, we have $\|Z_t\|_{L^2(\rho_0)} \le \sqrt{\varepsilon}$ on $[t_*,t_*+T_\varepsilon]$. Hence $\|W_t\|_{L^2(\rho_0)}^2 \le \varepsilon - \alpha(t)^2$.
Therefore,
\[
\Phi(t)=\alpha(t)-\|W_t\|_{L^2(\rho_0)}
\ge
\alpha(t)-\sqrt{\varepsilon-\alpha(t)^2}.
\]
It follows that $\Phi(t)>0$ whenever $\alpha(t) > \sqrt{\varepsilon/2}$. Using \cref{thmpralpha_cont}, this condition is satisfied for $t \geq T_{\mathrm{sep}}$ where 
\[
T_{\mathrm{sep}}
=
t_*+
\frac{\tau}{2\delta}
\log\!\left(
\frac{\sqrt{\varepsilon/2}-\gamma}{\alpha(t_*)-\gamma}
\right).
\]
Finally, to ensure $T_{\mathrm{sep}}\le t_*+T_\varepsilon$, it suffices that
\[
\frac{\tau}{2\delta}
\log\!\left(
\frac{\sqrt{\varepsilon/2}-\gamma}{\alpha(t_*)-\gamma}
\right)
\le
T_\varepsilon,
\]
or equivalently,
\[
\frac{\sqrt{\varepsilon/2}-\gamma}{\alpha(t_*)-\gamma}
\le
\exp\!\left(\frac{2\delta}{\tau}T_\varepsilon\right).
\]
Under this condition, we obtain $T_{\mathrm{sep}}\in[t_*,t_*+T_\varepsilon]$, and for all $t\in[T_{\mathrm{sep}},\,t_*+T_\varepsilon]$, $\Phi(t)>0.$ This completes the proof.
\end{proof}

\section{Details of Numerical Experiments and Extra Results}

In this section, we provide the comprehensive details for the numerical experiments introduced in \Cref{sec:main-numeric}. Specifically, we elaborate on the dataset generation processes, neural network architectures, training hyperparameters, and the precise formulations of the data augmentations employed across all four experimental domains: the synthetic 2D concentric rings, CIFAR-10 natural images, the continuous PDE heat equation, and the IMDB text classification task. Furthermore, we present supplementary empirical results and visualizations that provide additional support for our theoretical findings regarding the emergence of the spectral gap and its temporal precedence to linear separability.

\subsection{Synthetic Dataset: Concentric Rings (Donut)}

To isolate and observe the training dynamics in a purely geometric, low-dimensional setting, we utilize a 2D synthetic dataset consisting of two concentric, donut-shaped clusters. This environment provides a tractable baseline where the underlying topological invariants are perfectly known, allowing us to explicitly verify the theoretical relationship between spectral gap formation and linear separability.

Let the input space be $\mathbb{R}^2$. The dataset is constructed by uniformly sampling points from two distinct annular regions. The first cluster (Label 0) is defined by an inner radius $r_1$, while the second cluster (Label 1) is defined by $r_2$ such that $0 < r_1 < r_2$. Consequently, the cluster identity of any point $x \in \mathbb{R}^2$ is entirely determined by its radial norm $\|x\|_2$.

Because the semantic label depends only on the radius, the angular coordinate acts as a continuous nuisance variable. To formulate the contrastive learning task, we define our augmentation family as the special orthogonal group $SO(2)$. For a given sample $x$, an augmented view is generated via a random rotation:
$$
x \mapsto R_\theta x, \quad \text{where } R_\theta = \begin{bmatrix} \cos\theta & -\sin\theta \\ \sin\theta & \cos\theta \end{bmatrix}, \quad \theta \sim \mathcal{U}(0, 2\pi).
$$
This spatial transformation perfectly preserves the radial distance while altering the angle. The encoder $f : \mathbb{R}^2 \to \mathbb{R}$ is parameterized by a simple fully connected neural network (MLP). By enforcing invariance to $SO(2)$ transformations through the contrastive loss, the network is forced to collapse the angular variance, eventually mapping the non-linearly separable concentric rings into linearly separable clusters in the 1D embedding space.

\subsection{Image Contrastive Learning: CIFAR-10}

We use the CIFAR-10 dataset for our natural image experiments. To create a clear binary classification task, we select two highly distinct classes: "Dog" (Class 5) and "Ship" (Class 8). We randomly sample $50$ images from each class. This creates a small, challenging training set with a total of $N=100$ samples. The input space consists of RGB images $x \in \mathbb{R}^{3 \times 32 \times 32}$.

The encoder $f$ is a Convolutional Neural Network (CNN). This network processes the high-dimensional images and outputs a 1-dimensional scalar embedding. The architecture is straightforward
\begin{itemize}
    \item A 2D Convolutional layer (3 to 16 channels, $5 \times 5$ kernel) followed by a LeakyReLU activation and $2 \times 2$ Max Pooling.
    \item A second 2D Convolutional layer (16 to 32 channels, $5 \times 5$ kernel) followed by a LeakyReLU activation and $2 \times 2$ Max Pooling.
    \item A flattening step.
    \item A fully connected linear layer mapping the 2048 features down to 32, followed by a LeakyReLU activation.
    \item A final linear layer mapping the 32 features down to a single scalar value $f(x) \in \mathbb{R}$.
\end{itemize}
To create the positive pairs for contrastive learning, we apply a heavy mix of standard visual augmentations. For every training step, each image goes through a random pipeline. Specifically, we use
\begin{itemize}
    \item We randomly crop a section of the image (between 8\% and 100\% of the original area) and resize it back to $32 \times 32$ pixels using bilinear interpolation.
    \item There is a 50\% chance to flip the image horizontally. We also apply a fixed 20-degree rotation to all augmented views.
    \item There is a 50\% chance to apply color jitter. This randomly shifts the brightness, contrast, saturation, and hue. Finally, there is a 20\% chance to convert the image entirely to grayscale.
\end{itemize}

We train the model using a Contrastive Loss function with a temperature parameter of $\tau=0.1$. We optimize the network using Stochastic Gradient Descent (SGD) with a learning rate of $0.01$.

\subsection{PDE Contrastive Learning Experiments with Symmetry}

We build datasets using the heat equation to study a continuous physical setting. We look at scalar fields on the periodic domain $\Omega = [0,1)^2$. We place these fields on a uniform grid of size $40 \times 40$. Each sample is a vector in $\mathbb{R}^{1600}$, which we get by flattening the grid.

We build the starting data using periodic Fourier modes. We create two distinct clusters with 100 samples each, giving a total of $N=200$ samples. Label 0 uses low-frequency modes to make smooth, large shapes. Label 1 uses high-frequency modes to make tight, repeating patterns. We apply a forward heat evolution step to all initial fields. This process smooths out the fields and heavily reduces the high-frequency patterns. Finally, we normalize all grid values so they fall between -1 and 1.

The encoder $f$ is a Multi-Layer Perceptron (MLP). It takes the flattened 1600-dimensional input and processes it through three hidden layers. Each hidden layer has 256 neurons and uses a LeakyReLU activation function. The final layer outputs a 1-dimensional scalar embedding. 

To create positive pairs for contrastive learning, we apply a specific augmentation pipeline to the input fields. The first view is the original data point. For the second view, we apply two consecutive transformations
\begin{itemize}
    \item We apply a small, random amount of forward heat evolution using Gaussian convolution.
    \item We apply a random shift of up to 20 pixels in the horizontal and vertical directions. The edges wrap around seamlessly. 
\end{itemize}

We train the model using a contrastive loss with a temperature parameter of $\tau=0.1$. We optimize the network using Stochastic Gradient Descent with a learning rate of $0.01$. 
It shows several examples of the smooth, low-frequency fields from Label 0 next to the tighter, high-frequency patterns from Label 1.

\subsection{Text Contrastive Learning Experiment}

We use the IMDB movie review dataset \cite{maas2011learning} for our natural language experiments. We group the dataset into positive reviews (Label 0) and negative reviews (Label 1). We randomly select $100$ reviews from each sentiment class, giving us a total of $N=200$ text samples. 

We process the raw text using a simple tokenizer that only keeps alphanumeric characters and apostrophes, converting everything to lowercase. From this training data, we build a vocabulary of the $5000$ most common words, reserving special tokens for padding, unknown words, and masking. We convert each review into a sequence of token IDs, either padding it with zeros or truncating it to exactly $1024$ tokens.

The encoder $f$ is an encoder-only Transformer model. It maps the discrete tokens into a continuous space using a learned embedding layer with $d=128$ dimensions. We add learned positional embeddings to preserve the word order. The core Transformer architecture consists of two stacked encoder layers. Each layer uses 4 attention heads and a feedforward network dimension of 512, with a GELU activation function. We apply a 10\% dropout rate to both the general Transformer blocks and the attention weights specifically.

To ensure the model learns semantic features rather than relying on sequence length, we use a masked mean pooling strategy. We average the hidden states across the sequence, but we strictly ignore any padding tokens:
$$
\bar h(x) = \frac{\sum_{\ell=1}^L m_\ell(x)\,h_\ell(x)}{\sum_{\ell=1}^L m_\ell(x)}
$$
Here, $h_\ell(x)$ is the output of the Transformer at position $\ell$, and $m_\ell(x)$ is a binary mask that is zero for padding tokens. We pass this pooled representation through a small fully connected head (hidden size of 256) to produce the final scalar embedding $f(x) \in \mathbb{R}$.

To create positive pairs for contrastive learning, we apply a text-specific augmentation pipeline. We dynamically generate two new augmented views for every sample at each training step. We carefully design these augmentations to preserve the core sentiment while removing grammatical structure and varying the length. The pipeline includes
\begin{itemize}
    \item  We take a random contiguous slice of the sequence (between 85\% and 100\% of the original length) before applying padding. 
    \item  There is a 30\% chance to randomly reorder the sentences within the review.
    \item  We randomly delete or mask entire contiguous blocks of words.
    \item  We randomly drop individual tokens, but we make it 2.5 times more likely to drop common, uninformative stopwords like "the" or "and."
\end{itemize}

Crucially, we maintain a hardcoded lexicon of "protected" words. This includes negations (like "not" or "never") and strong polarity words (like "excellent" or "terrible"). We strictly prevent the augmentation pipeline from masking, dropping, or swapping these protected words, ensuring the fundamental sentiment of the review remains identical across all views.

We train the model using a contrastive loss with a temperature of $\tau=0.1$. We use the AdamW optimizer with a learning rate of $10^{-5}$ and a weight decay of $0.01$. 

\subsection{Extra numerical results}

In this section, we extend the experiment to higher-dimensional settings, where the feature distribution lies in $\mathbb{R}^2$ and $\mathbb{R}^3$ instead of $\mathbb{R}$, and present additional results to support \Cref{prop:nn-toy-map} and \Cref{thm:lin_sep}.  Furthermore, we constrain the embedding maps to lie in the set $\mathcal{C} = \{f:\mathbb{R}^D \rightarrow \mathbb{S}^{d-1}\}$, where we consider $d = 2, 3$, making the setup closer to the original NT-Xent loss in \cref{eq:cost-discrete}. All experiments in the paper are conducted using one NVIDIA V100 GPU. 

These experiments further illustrate how neural network optimization influences training dynamics. We also compare the behavior of vanilla gradient descent (assuming that $f$ is fixed at time $t=0$ as in \cref{eq:inv}) with that of gradient descent through a neural network. Despite the change in output dimension, the same phenomenon persists: neural network training guides the dynamics toward stationary points that reflect the underlying clustering structure of the data. In contrast, vanilla gradient descent, which lacks the representational capacity of the neural network, remains insensitive to the data structure and fails to produce meaningful separation. 

In \Cref{fig:vec-comparison}, we use a simple artificial dataset in $\mathbb{R}^2$ consisting of four clusters aligned along the $x$-axis, with each cluster generated from a Gaussian distribution centered at $(-1,0)$, $(0,0)$, $(1,0)$, and $(2,0)$, respectively. Points are colored according to their cluster labels, and arrows indicate the direction of the negative gradient computed at each point. The key parameters are set as follows: $\tau=0.2$, $\delta=0.3$, and $n=2{,}000$. We use stochastic gradient descent with a learning rate of $0.001$ to optimize the model. The training dynamics are examined using the contrastive loss, comparing two optimization strategies: vanilla gradient descent and gradient descent through a neural network.

For vanilla gradient descent (Row 1 in \Cref{fig:vec-comparison}), the training dynamics show that the data points gradually spread until they form a uniformly dispersed distribution on a sphere, effectively erasing the initial clustering structure. This behavior aligns with \Cref{prop:nn-toy-map}, which asserts that the gradient of the loss function is independent of the input structure.

In contrast, neural network optimization produces linearly separable feature representations early in training, with clusters becoming distinctly separated by hyperplanes, thereby confirming \Cref{thm:lin_sep}. Despite random initialization, the contrastive loss gradient effectively guides the features so that the error terms of order $\delta$ remain negligible over short intervals.

\begin{figure}[!t]
    \centering
    
    \begin{minipage}[t]{0.20\linewidth}
        \centering
        \includegraphics[width=\linewidth]{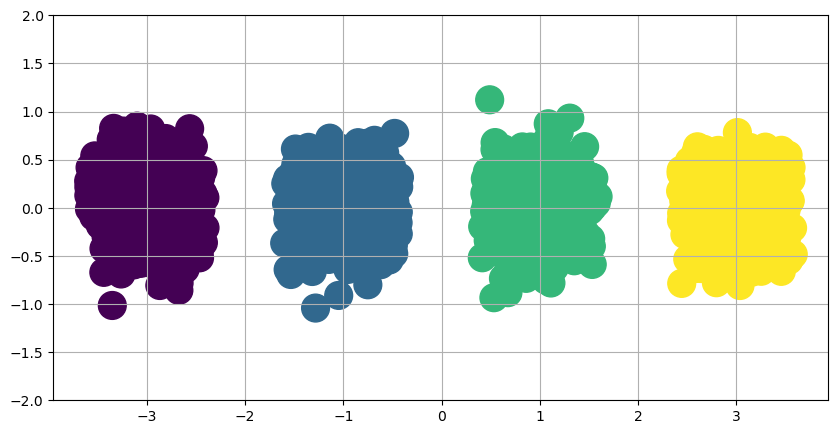}
        \caption*{(a) Input data}
    \end{minipage}\hfill
    \begin{minipage}[t]{0.75\linewidth}
        \centering

        \begin{minipage}[t]{1.0\linewidth}
            \centering
            \begin{minipage}[t]{0.19\linewidth}
                \centering
                \includegraphics[width=\linewidth]{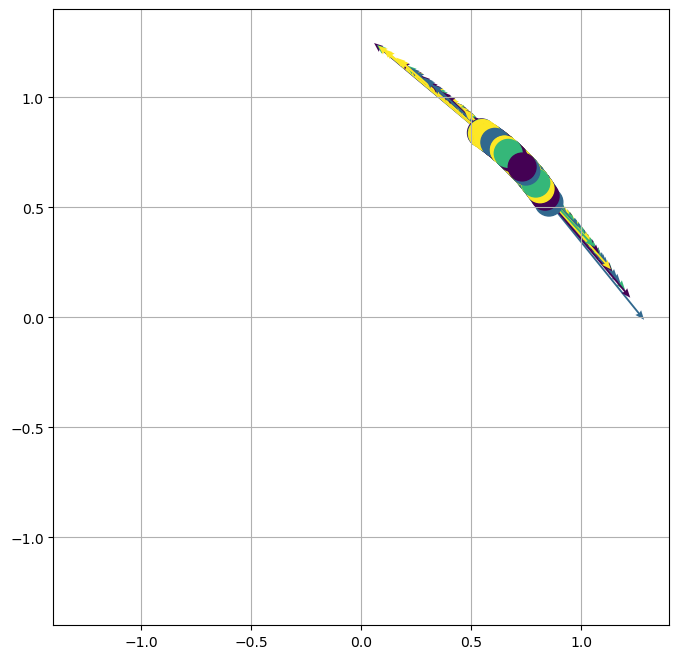}
                \caption*{(b) It: 0}
            \end{minipage}\hfill
            \begin{minipage}[t]{0.19\linewidth}
                \centering
                \includegraphics[width=\linewidth]{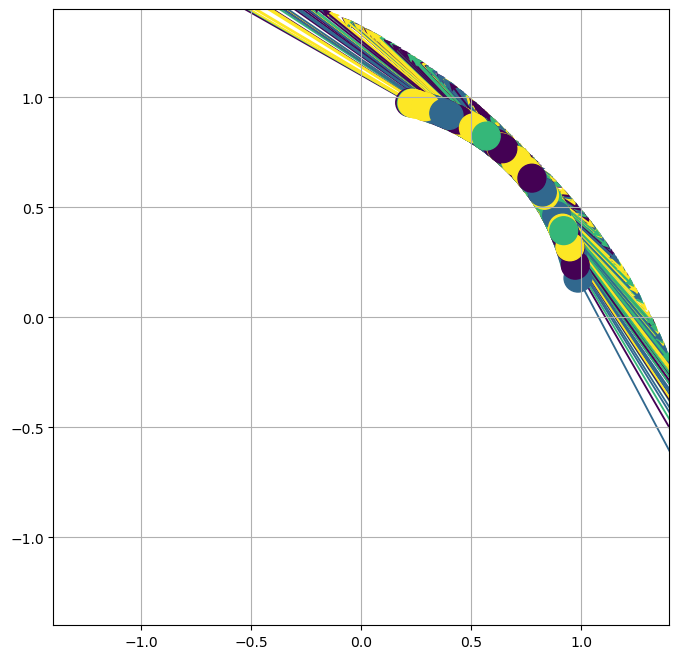}
                \caption*{(c) It: 30}
            \end{minipage}\hfill
            \begin{minipage}[t]{0.19\linewidth}
                \centering
                \includegraphics[width=\linewidth]{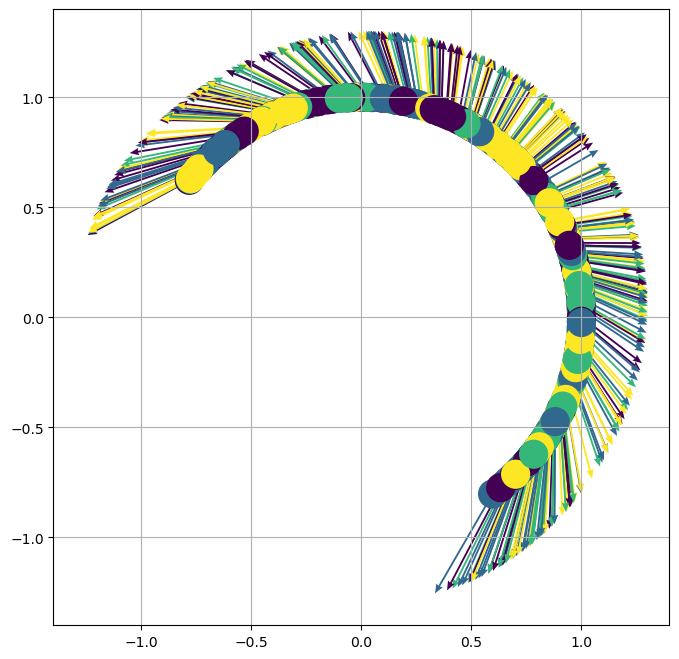}
                \caption*{(d) It: 200}
            \end{minipage}\hfill
            \begin{minipage}[t]{0.19\linewidth}
                \centering
                \includegraphics[width=\linewidth]{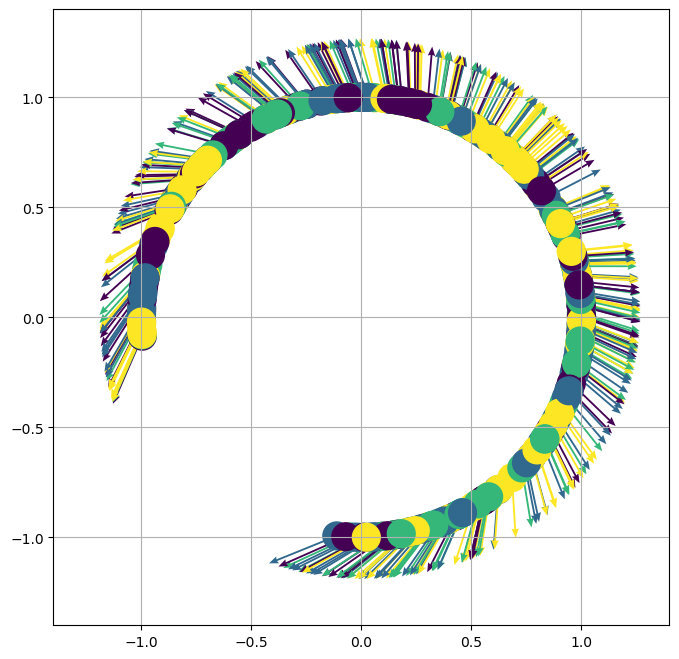}
                \caption*{(e) It: 400}
            \end{minipage}\hfill
            \begin{minipage}[t]{0.19\linewidth}
                \centering
                \includegraphics[width=\linewidth]{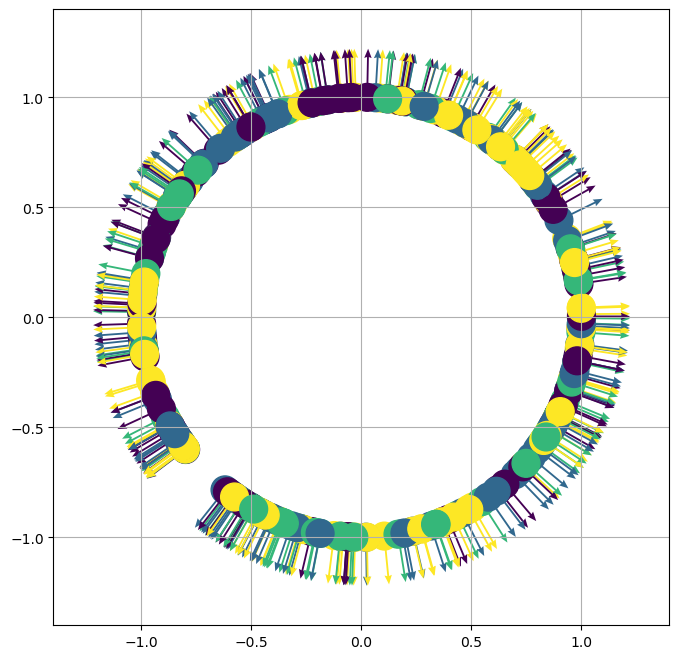}
                \caption*{(f) It: 2,000}
            \end{minipage}
        \end{minipage}

        \vspace{0.5cm}

        \begin{minipage}[t]{1.0\linewidth}
            \centering
            \begin{minipage}[t]{0.19\linewidth}
                \centering
                \includegraphics[width=\linewidth]{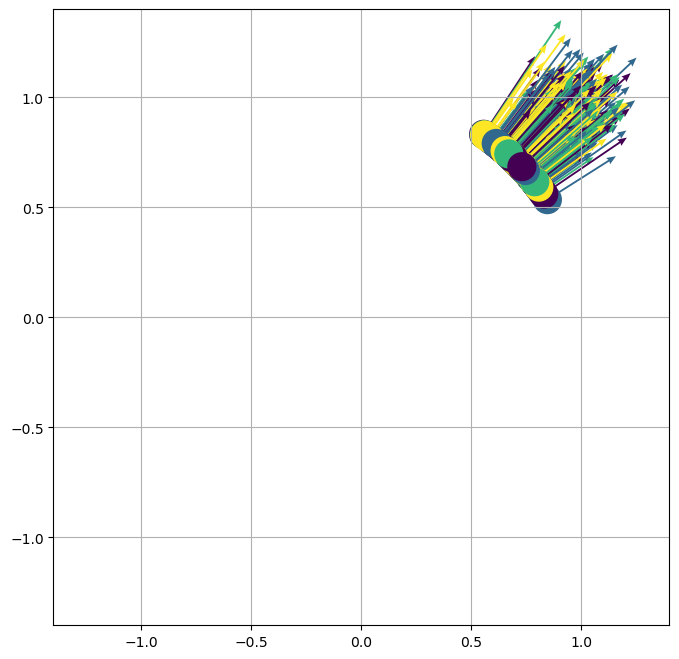}
                \caption*{(g) It: 0}
            \end{minipage}\hfill
            \begin{minipage}[t]{0.19\linewidth}
                \centering
                \includegraphics[width=\linewidth]{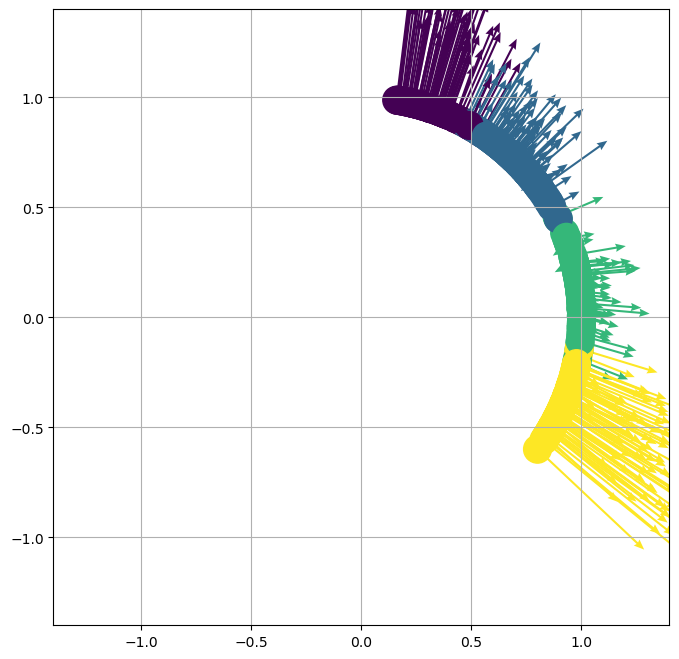}
                \caption*{(h) It: 40}
            \end{minipage}\hfill
            \begin{minipage}[t]{0.19\linewidth}
                \centering
                \includegraphics[width=\linewidth]{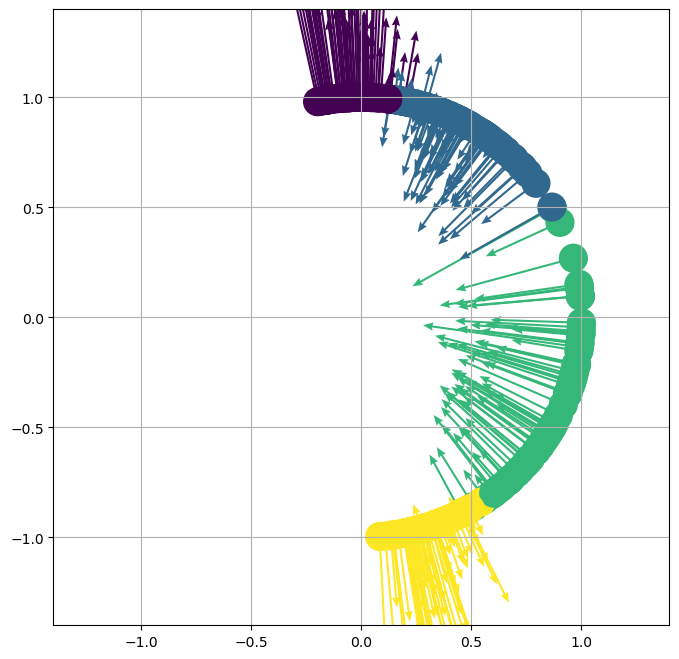}
                \caption*{(i) It: 80}
            \end{minipage}\hfill
            \begin{minipage}[t]{0.19\linewidth}
                \centering
                \includegraphics[width=\linewidth]{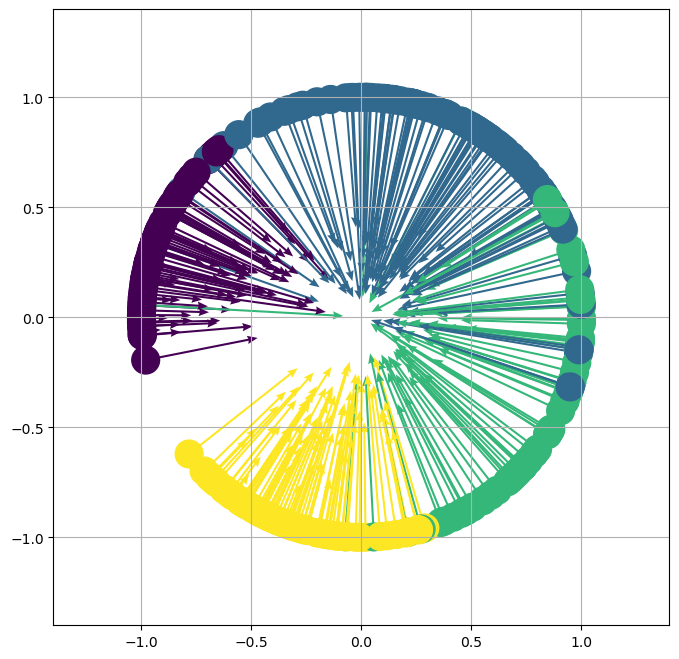}
                \caption*{(j) It: 300}
            \end{minipage}\hfill
            \begin{minipage}[t]{0.19\linewidth}
                \centering
                \includegraphics[width=\linewidth]{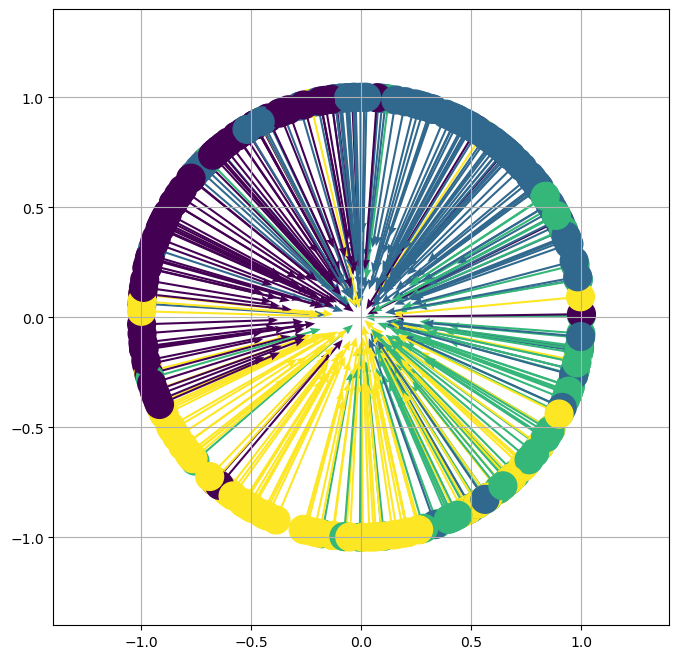}
                \caption*{(k) It: 2,000}
            \end{minipage}
        \end{minipage}

    \end{minipage}
    
    \caption{
        Comparison of the optimization process with and without neural network training. In each panel, points are colored according to their cluster assignments based on the input data, and arrows denote the negative gradient. Row 1 shows optimization using vanilla gradient descent, where the distribution eventually becomes uniformly dispersed, disregarding the clustering structure of the input data. Row 2 shows optimization with neural network training, where the clustering structure becomes linearly separable in the early iterations. This outcome is consistent with \Cref{prop:w-gf}.
    }
    \label{fig:vec-comparison}
\end{figure}

\Cref{fig:appendix-3d} compares the optimization processes with and without neural network training in both 2D and 3D, using different neural network architectures from that used in the main text and earlier figures. While the main text and preceding experiments use a simple one-hidden-layer neural network, \Cref{fig:appendix-3d} employs a deeper 4-layer fully connected neural network. This demonstrates that the same phenomenon, emergence of clustering under neural network training, persists regardless of the network architecture.

The initial data distributions are shown in panels (a) and (l). The color of each point corresponds to its respective cluster. Rows 1 and 3 show the optimization with neural network training, starting from a random initial embedding and progressively revealing the clustering structure over time. In contrast, rows 2 and 4 display the optimization with vanilla gradient descent, where the feature distribution gradually converges to a uniform arrangement, disregarding the clustering structure present in the input data.

\begin{figure}[!ht]
    \centering

    \begin{minipage}[c]{0.18\linewidth}
        \centering
        \includegraphics[width=\linewidth]{figures/data.png}
        \caption*{(a) Input data (2D)}
    \end{minipage}\hfill
    \begin{minipage}[c]{0.8\linewidth}
        \begin{minipage}[t]{0.24\linewidth}
            \centering
            \includegraphics[width=\linewidth]{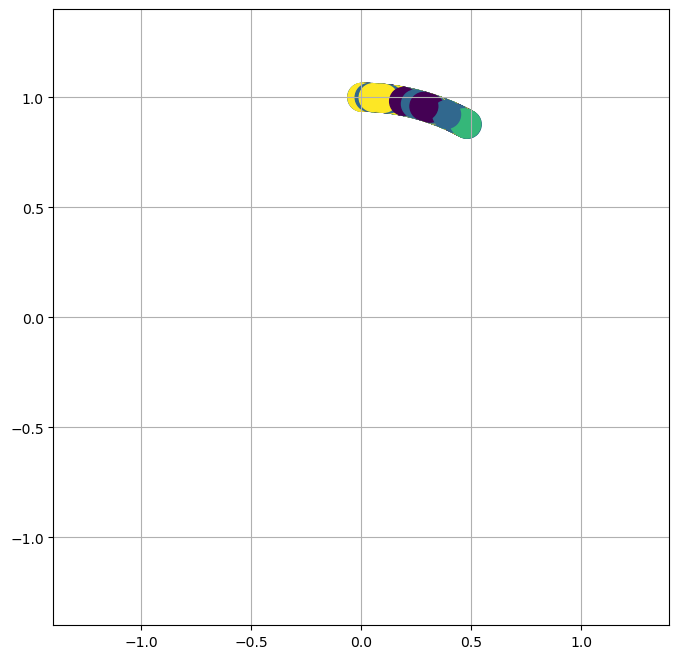}
            \caption*{(b) It: 0}
        \end{minipage}\hfill
        \begin{minipage}[t]{0.24\linewidth}
            \centering
            \includegraphics[width=\linewidth]{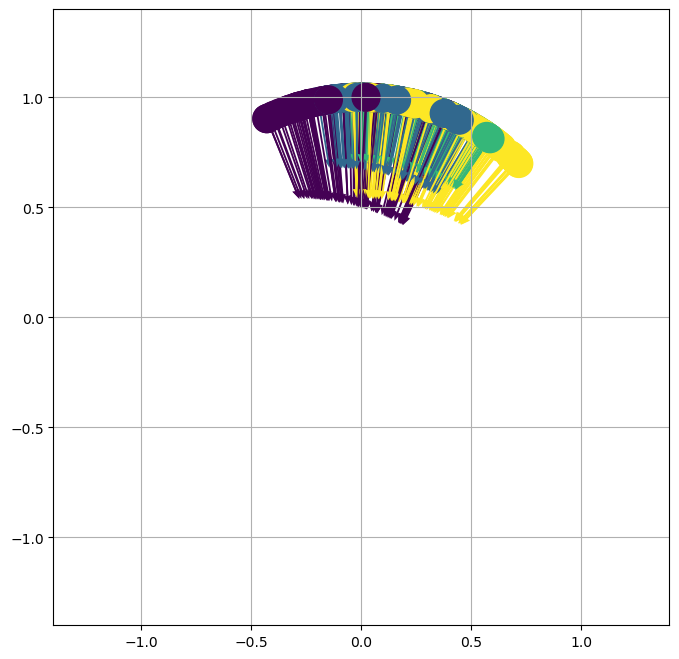}
            \caption*{(c) It: 70}
        \end{minipage}\hfill
        \begin{minipage}[t]{0.24\linewidth}
            \centering
            \includegraphics[width=\linewidth]{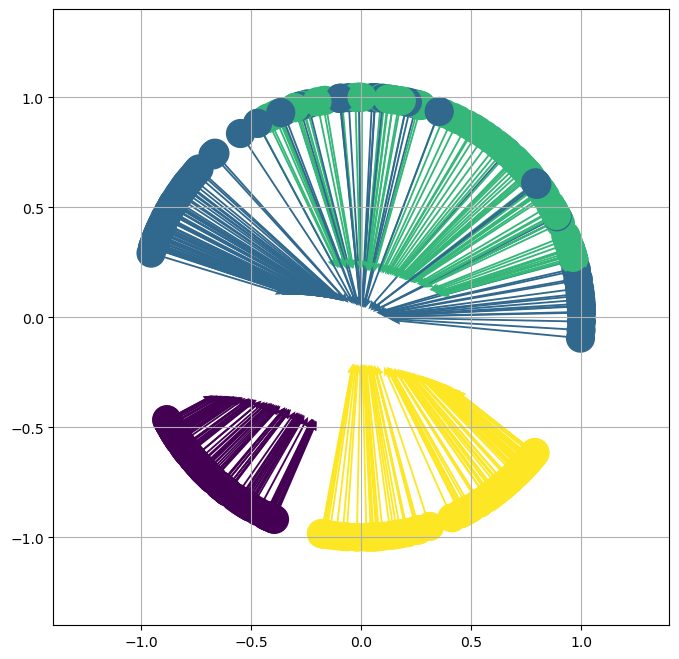}
            \caption*{(d) It: 100}
        \end{minipage}\hfill
        \begin{minipage}[t]{0.24\linewidth}
            \centering
            \includegraphics[width=\linewidth]{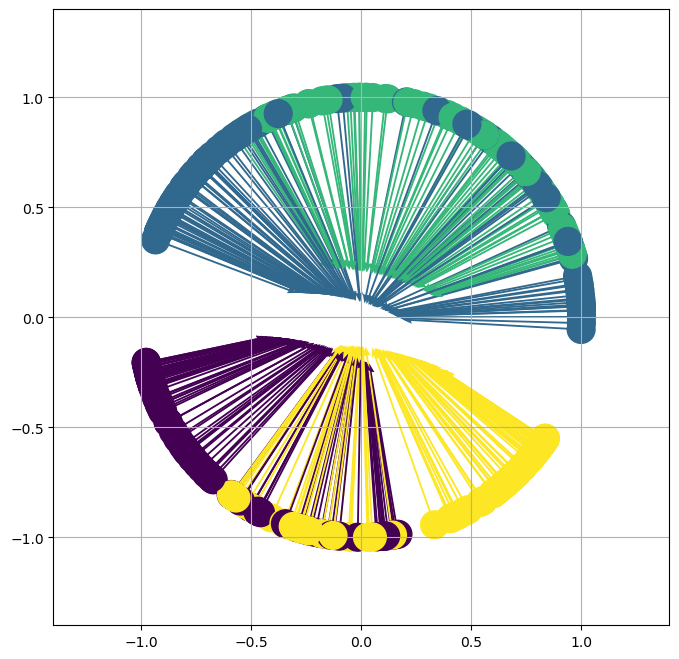}
            \caption*{(e) It: 500}
        \end{minipage}

        \vspace{0.8em}

        \begin{minipage}[t]{0.24\linewidth}
            \centering
            \includegraphics[width=\linewidth]{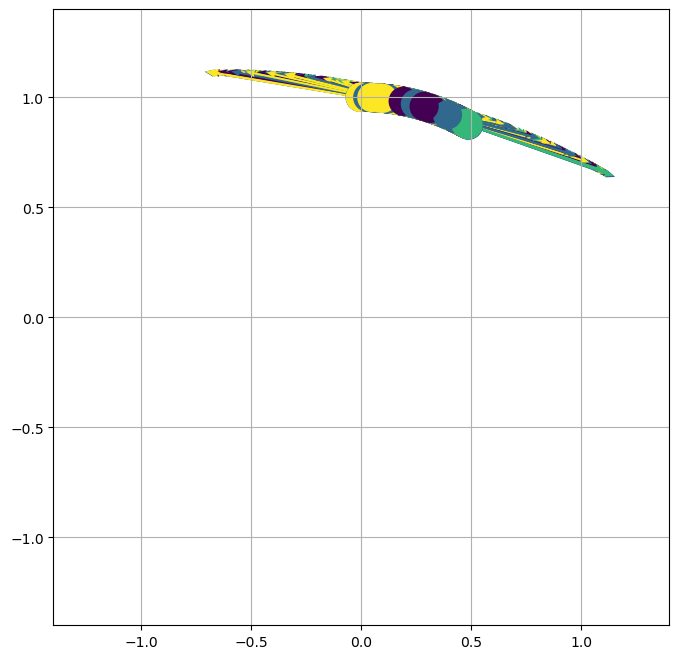}
            \caption*{(f) It: 0}
        \end{minipage}\hfill
        \begin{minipage}[t]{0.24\linewidth}
            \centering
            \includegraphics[width=\linewidth]{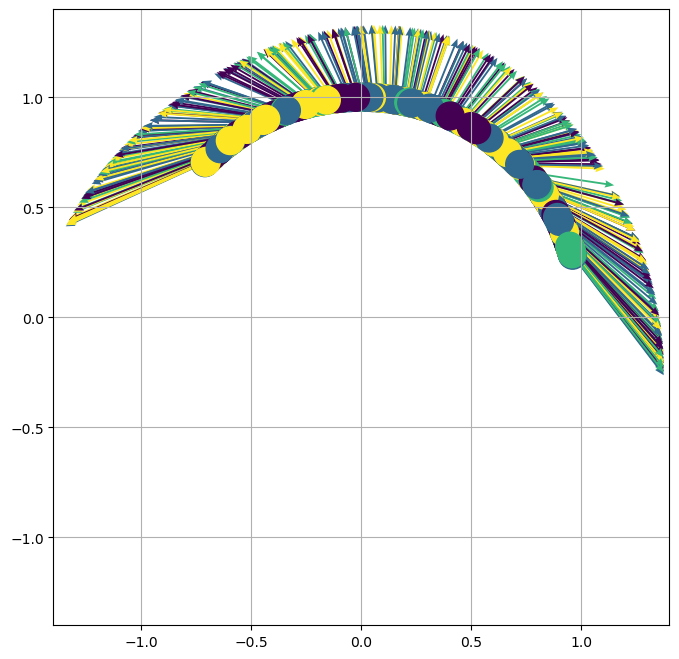}
            \caption*{(g) It: 50}
        \end{minipage}\hfill
        \begin{minipage}[t]{0.24\linewidth}
            \centering
            \includegraphics[width=\linewidth]{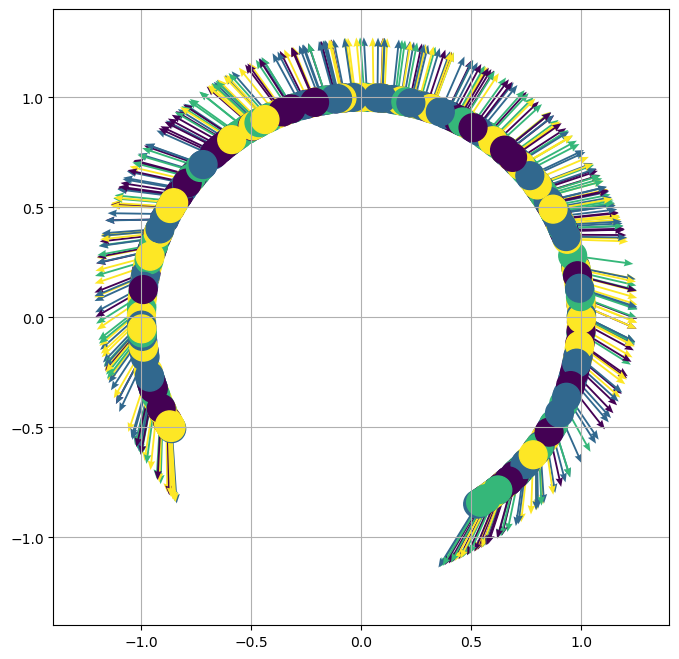}
            \caption*{(h) It: 400}
        \end{minipage}\hfill
        \begin{minipage}[t]{0.24\linewidth}
            \centering
            \includegraphics[width=\linewidth]{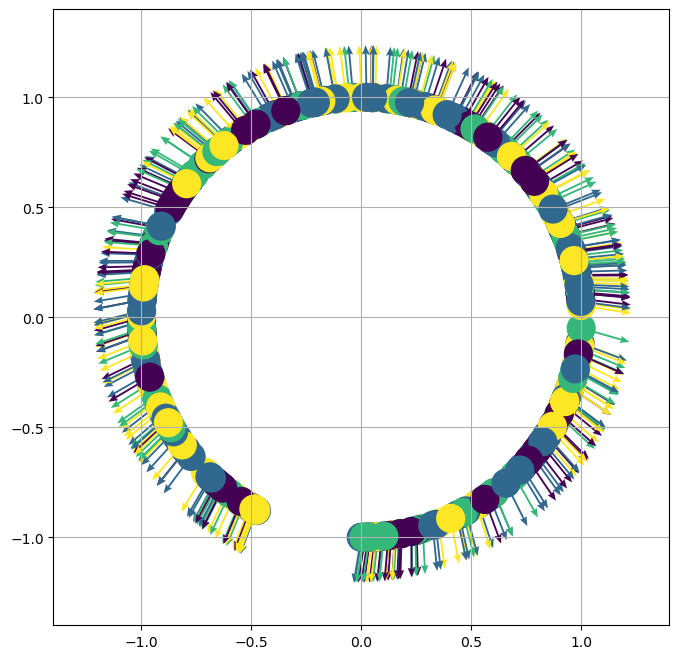}
            \caption*{(i) It: 800}
        \end{minipage}
    \end{minipage}

    \vspace{1.5em}
    \hrule
    \vspace{1.5em}

    \begin{minipage}[c]{0.18\linewidth}
        \centering
        \includegraphics[width=\linewidth]{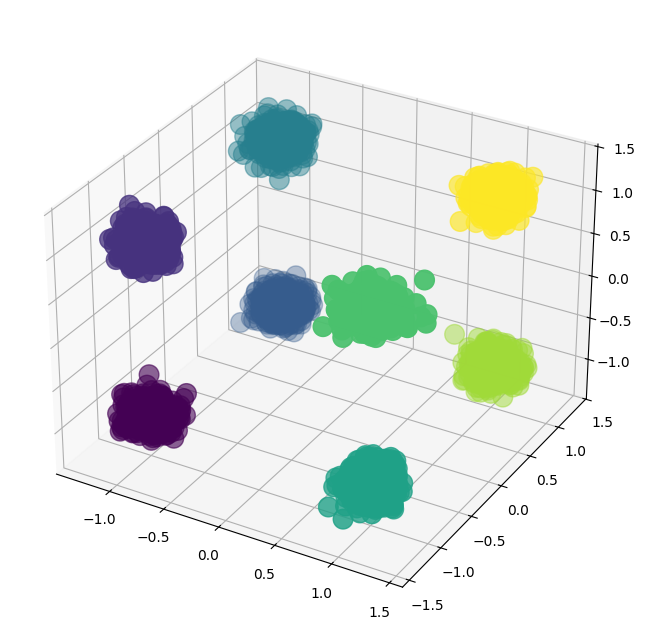}
        \caption*{(j) Input data (3D)}
    \end{minipage}\hfill
    \begin{minipage}[c]{0.8\linewidth}
        \begin{minipage}[t]{0.24\linewidth}
            \centering
            \includegraphics[width=\linewidth]{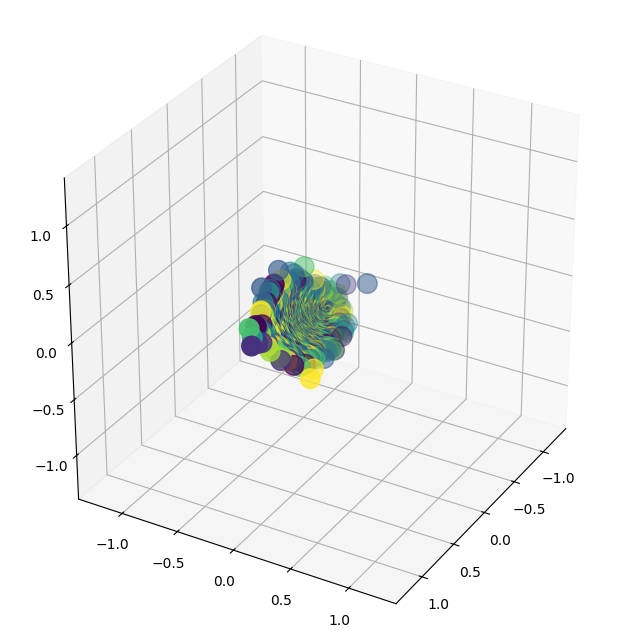}
            \caption*{(k) It: 0}
        \end{minipage}\hfill
        \begin{minipage}[t]{0.24\linewidth}
            \centering
            \includegraphics[width=\linewidth]{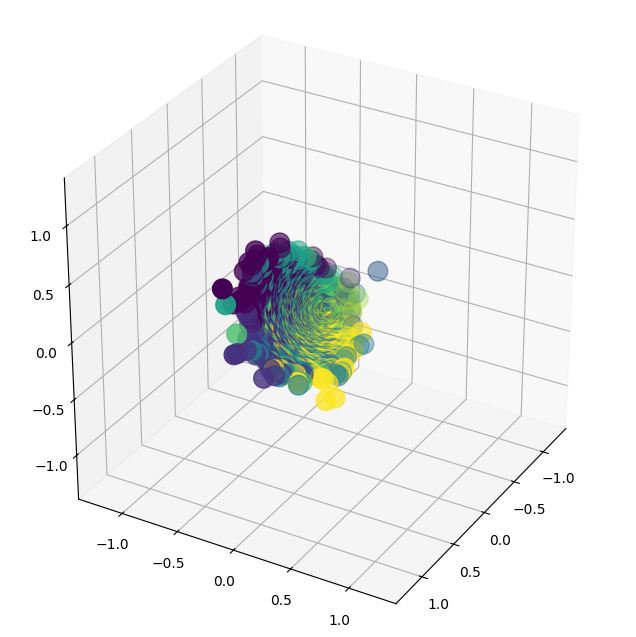}
            \caption*{(l) It: 700}
        \end{minipage}\hfill
        \begin{minipage}[t]{0.24\linewidth}
            \centering
            \includegraphics[width=\linewidth]{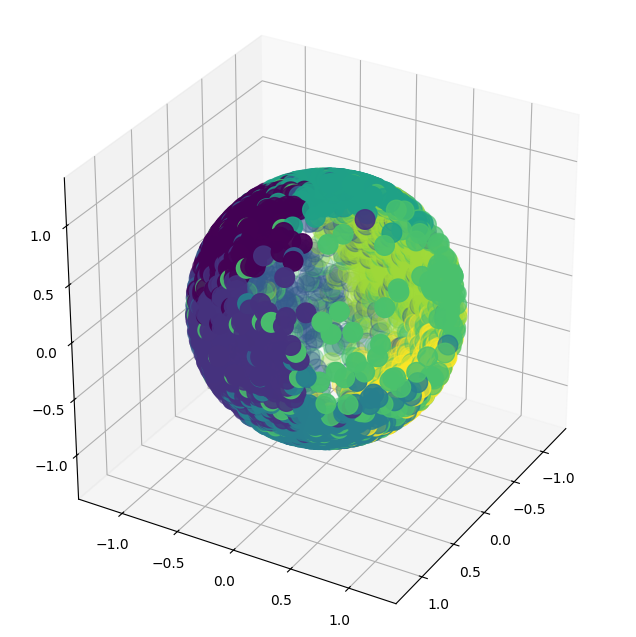}
            \caption*{(m) It: 1,500}
        \end{minipage}\hfill
        \begin{minipage}[t]{0.24\linewidth}
            \centering
            \includegraphics[width=\linewidth]{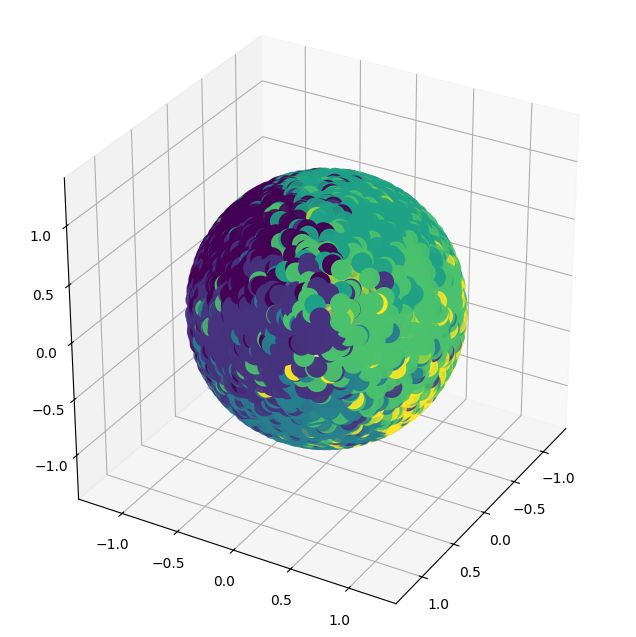}
            \caption*{(n) It: 2,000}
        \end{minipage}

        \vspace{0.8em}

        \begin{minipage}[t]{0.24\linewidth}
            \centering
            \includegraphics[width=\linewidth]{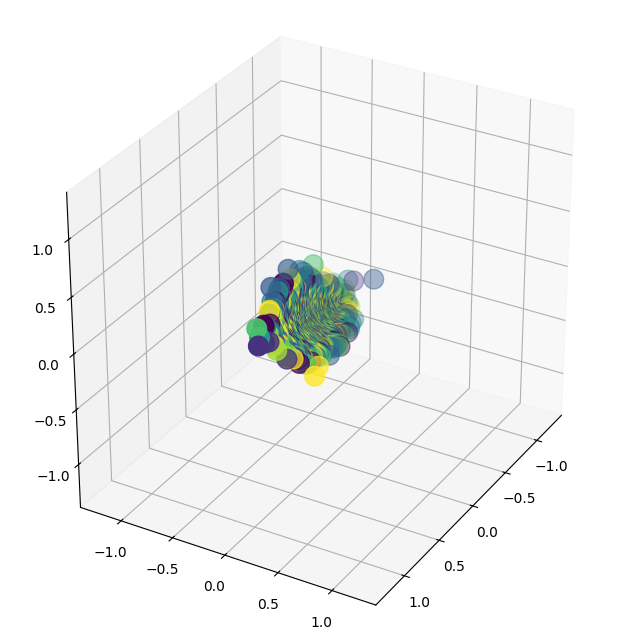}
            \caption*{(o) It: 0}
        \end{minipage}\hfill
        \begin{minipage}[t]{0.24\linewidth}
            \centering
            \includegraphics[width=\linewidth]{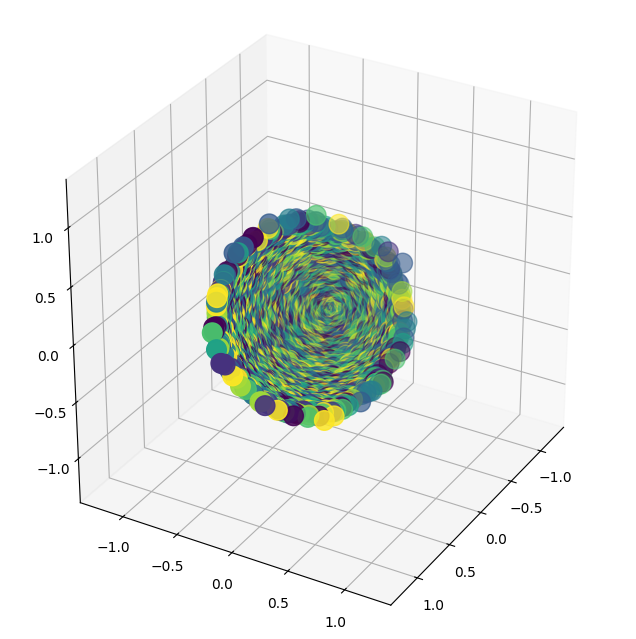}
            \caption*{(p) It: 50}
        \end{minipage}\hfill
        \begin{minipage}[t]{0.24\linewidth}
            \centering
            \includegraphics[width=\linewidth]{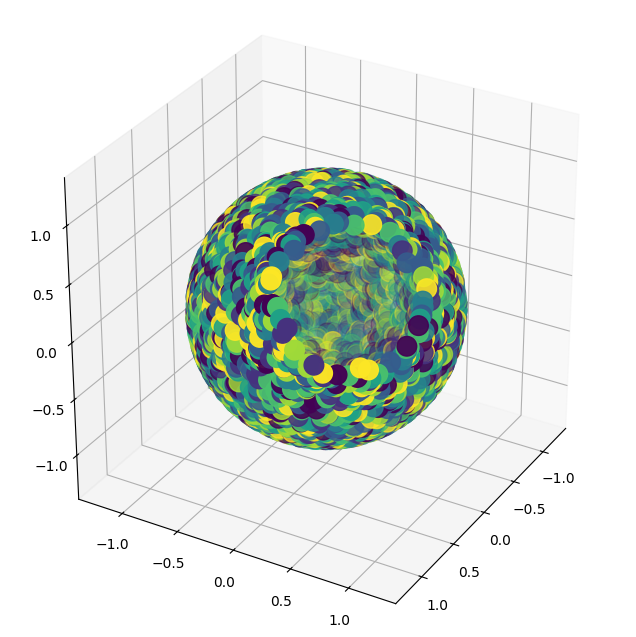}
            \caption*{(q) It: 1,000}
        \end{minipage}\hfill
        \begin{minipage}[t]{0.24\linewidth}
            \centering
            \includegraphics[width=\linewidth]{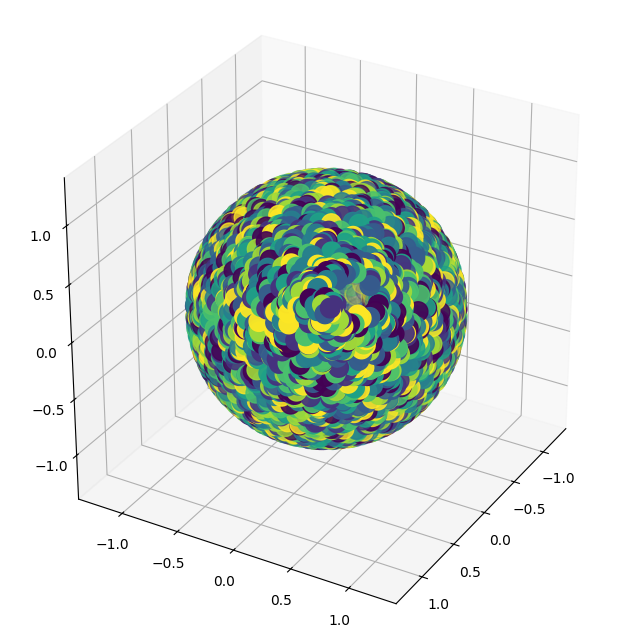}
            \caption*{(r) It: 2,000}
        \end{minipage}
    \end{minipage}

    \caption{
        This experiment compares the optimization processes with and without neural network training in 2D and 3D, with the data distribution depicted in (a) and (j). A 4-layer fully connected neural network demonstrates consistent outcomes as in \Cref{fig:vec-comparison}. Each point's color indicates its cluster. For both the 2D and 3D blocks, the top rows show optimization with neural network training, starting from a random embedding and gradually revealing the clustering structure. In contrast, the bottom rows illustrate the optimization process using vanilla gradient descent, which converges to a uniformly dispersed arrangement, disregarding the input data's clustering structure.
    }
    \label{fig:appendix-3d}
\end{figure}

\bibliographystyle{unsrt}
\bibliography{ref}

\end{document}